\newcommand{\PP}{\mathbb{P}}
\newcommand{\OO}{\mathcal{O}}
\newcommand{\II}{\mathcal{I}}
\newcommand{\Hom}{\textsf{Hom}}
\newcommand{\End}{\operatorname{End}}
\newcommand{\sHom}{\mathcal{H}\text{om}}
\newcommand{\Ext}{\operatorname{Ext}}
\theoremstyle{plain}
\newtheorem{lemma}{Lemma}[section]
\newtheorem*{theorem*}{Theorem}
\newtheorem*{lemma*}{Lemma}
\newtheorem*{proposition*}{Proposition}
\newtheorem*{conjecture*}{Conjecture}
\newtheorem*{corollary*}{Corollary}
\newtheorem*{problem*}{Problem}
\newtheorem{theorem}[lemma]{Theorem}
\newtheorem{conjecture}[lemma]{Conjecture}
\newtheorem{corollary}[lemma]{Corollary}
\newtheorem{proposition}[lemma]{Proposition}
\theoremstyle{definition}
\newtheorem{definition}[lemma]{Definition}
\begin{document}

\title{Normal bundles of rational curves on complete intersections}
\author[I. Coskun]{Izzet Coskun}
\address{Department of Mathematics, Stat. and CS \\University of Illinois at Chicago, Chicago, IL 60607}
\email{coskun@math.uic.edu}

\author[E. Riedl]{Eric Riedl}
\email{ebriedl@uic.edu}

\subjclass[2010]{Primary: 14H60, 14G17. Secondary: 14J70, 14J45, 14N25, }
\keywords{Rational curves, normal bundles, complete intersections}
\thanks{During the preparation of this article the first  author was partially supported by the NSF grant DMS 1500031; and the second author was partially supported by an NSF RTG grant DMS-1246844}

\begin{abstract}
Let $X \subset \PP^n$ be a general Fano complete intersection of type $(d_1,\dots, d_k)$. If at least one $d_i$  is greater than $2$, we show that $X$ contains rational curves of degree $e \leq n$ with balanced normal bundle. If all $d_i$ are $2$ and $n\geq 2k+1$, we show that $X$ contains rational curves of degree $e \leq n-1$ with balanced normal bundle. As an application, we prove a stronger version of the theorem of Z. Tian \cite{Tian},  Q. Chen and Y. Zhu \cite{ChenZhu} that $X$ is separably rationally connected by exhibiting very free rational curves in $X$ of optimal degrees.
\end{abstract}

\maketitle

\section{Introduction}
Spaces of rational curves play a fundamental role in studying the geometry and arithmetic of projective varieties. The local structure of these spaces at a point $[C]$ is controlled by the normal bundle of $C$. Normal bundles of rational curves have been studied extensively (see \cite{AlzatiRe, CoskunRiedl, EisenbudVandeven, EisenbudVandeven2, Ran, Sacchiero2, Sacchiero} for rational curves in $\PP^n$ and  \cite{Debarre, Furukawa, Larson, Kollar} for rational curves in more general varieties). In this paper, we compute the normal bundles of  rational normal curves in general complete intersections and show that under mild assumptions they are balanced.  As a corollary, we obtain a stronger quantitative version of  the theorem of Z. Tian \cite{Tian},  Q. Chen and Y. Zhu \cite{ChenZhu} that a general Fano complete intersection is separably rationally connected. We provide optimal bounds on the degree of very free rational curves on general complete intersections. We work over an algebraically closed field $K$ of arbitrary characteristic $p$.  

Let $2 \leq d_1 \leq d_2 \leq \cdots \leq d_k$ be a sequence of positive integers and let $d=\sum_{i=1}^r d_i$. Let $X$ be a general Fano complete intersection of type $(d_1,\dots,d_k)$ in $\PP^n$. The normal bundle of a rational curve $C \subset X$  splits as a direct sum of line bundles $N_{C/X}= \bigoplus_{i=1}^{n-k-1} \OO_{\PP^1}(a_i)$. We say that the normal bundle $N_{C/X}$ is {\em balanced} if $|a_i - a_j| \leq 1$ for every $i,j$.
The main theorem of this paper is the following.

\begin{theorem*}
Let $X \subset \PP^n$ be a general Fano complete intersection of type $(d_1, \dots, d_k)$. 
\begin{enumerate}
\item If $d_k \geq 3$, then $X$ contains rational curves with balanced normal bundle of every degree $1 \leq e \leq n$.
\item If $d_1 = \cdots = d_k =2$ and $n\geq 2k+1$, then $X$ contains  rational curves with balanced normal bundle of every degree $1 \leq e \leq n-1$.
\end{enumerate}
\end{theorem*}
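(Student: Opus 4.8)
The plan is to study $C$ through the standard normal bundle exact sequence and reduce balancedness to a genericity statement about the defining equations. I would fix $C$ to be a rational normal curve of degree $e$ spanning a linear subspace $\PP^e \subseteq \PP^n$, so that
$$N_{C/\PP^n} \cong \OO_{\PP^1}(e+2)^{\oplus (e-1)} \oplus \OO_{\PP^1}(e)^{\oplus (n-e)},$$
and let $X$ range over complete intersections of type $(d_1,\dots,d_k)$ containing $C$; this is a linear family, cut out by choosing forms $F_i$ of degree $d_i$ in the ideal of $C$. Since $N_{X/\PP^n}|_C \cong \bigoplus_{i=1}^k \OO_{\PP^1}(d_i e)$, the conormal sequence identifies $N_{C/X}$ as the kernel of the bundle map
$$\varphi = (dF_1, \dots, dF_k) : N_{C/\PP^n} \longrightarrow \bigoplus_{i=1}^k \OO_{\PP^1}(d_i e)$$
induced by the differentials of the $F_i$. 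Because being balanced is an open condition in families and the space of pairs $(C,X)$ dominates the space of complete intersections, it suffices to exhibit a single tuple $(F_i)$ for which $\ker\varphi$ is balanced; in fact I would argue that the \emph{general} admissible tuple works.

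I would prove balancedness of $\ker\varphi$ by induction on $k$, peeling off one hypersurface at a time. Writing $X_j$ for the complete intersection of $F_1,\dots,F_j$, the sequence
$$0 \to N_{C/X_j} \to N_{C/X_{j-1}} \xrightarrow{dF_j} \OO_{\PP^1}(d_j e) \to 0$$
reduces each step to controlling the kernel of a general surjection from the inductively known, nearly balanced bundle $N_{C/X_{j-1}}$ onto a single line bundle. The key lemma I need is that, for a bundle $E$ on $\PP^1$ whose summands lie in a small range and a line bundle $\OO_{\PP^1}(m)$ with $m$ at least the top summand degree, a general surjection $E \to \OO_{\PP^1}(m)$ has balanced kernel. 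The genuine content — and the place the argument could break — is that the maps $dF_j$ are not arbitrary: they are constrained to lie in the image of the restriction map from degree-$d_j$ forms vanishing on $C$ into $\Hom(N_{C/X_{j-1}}, \OO_{\PP^1}(d_j e))$. Thus the crux is a surjectivity/genericity statement showing that this image is large enough to realize a map whose kernel is balanced; I would establish it by producing favorable equations adapted to the splitting of $N_{C/\PP^n}$ and verifying the relevant $H^1$-vanishing for twists of the candidate kernel, which forces the generic splitting to be the balanced one.

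The main obstacle is the all-quadrics case, and it is exactly this rigidity that produces the hypotheses $e \le n-1$ and $n \ge 2k+1$. When $d_i = 2$ the differential $dF_i$ is linear in the homogeneous coordinates, so the maps $N_{C/\PP^n} \to \OO_{\PP^1}(2e)$ span a much smaller and more rigid subspace of $\Hom$ than in the $d_k \ge 3$ case; at the top degree $e=n$ this subspace is too small to balance the kernel, which is why one must drop to $e = n-1$, and the inequality $n \ge 2k+1$ is what guarantees enough quadrics through $C$ to keep the image of the differentials large enough for the genericity lemma to apply. I would treat this case with a separate, more hands-on computation of $\ker\varphi$ for a carefully chosen quadric tuple. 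Finally, since the paper works in arbitrary characteristic $p$, I would take care that the differential description of $\varphi$ remains valid — in particular handling the Euler relation $\sum_j x_j \partial_j F = (\deg F)\,F$, which degenerates when $p \mid d_i$ — by phrasing the maps $dF_i$ intrinsically through the conormal sequence rather than through coordinate partials wherever the characteristic could interfere.
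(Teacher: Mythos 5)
Your treatment of the $d_i \geq 3$ half is essentially the paper's argument: the same identification of $N_{C/X}$ as a kernel (the paper derives the map from applying $\Hom(-,\OO_C)$ to $0 \to \II_{X/\PP^n} \to \II_{C/\PP^n} \to \II_{C/X} \to 0$, which is exactly the characteristic-free, intrinsic phrasing you propose in place of coordinate partials), the same general-kernel lemma (Lemma \ref{lem-generalkernel}), and the same peel-one-hypersurface induction (Proposition \ref{passingtohypersurface}; note the hypersurfaces must be added in \emph{increasing} degree order so that $\Ext^1\bigl(\bigoplus_i \OO_{\PP^1}(ed_i), \OO_{\PP^1}(ed_{k+1})\bigr) = 0$ and the restricted homomorphism stays general). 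Two points of divergence in mechanism: the paper proves the crucial surjectivity of $\phi$ not by producing equations adapted to the splitting but by a pure dimension count — the kernel of $\phi$ is $H^0(\II_{R_e}^2(d))$, bounded above by degenerating $R_e$ to a chain of lines and counting double-vanishing conditions (Lemma \ref{degenLem}), after which $\dim$ of source minus kernel already equals $\dim \Hom(N_{R_e/\PP^n},\OO_{\PP^1}(ed))$; and the splitting $N_{R_e/\PP^n} \cong \OO_{\PP^1}(e+2)^{e-1}\oplus\OO_{\PP^1}(e)^{n-e}$, which you assert, itself needs an argument when $p \mid e$ (the Jacobian method fails there) — the paper supplies it via the explicit syzygy analysis of the quadric generators $q_{ij}$ in Proposition \ref{RNCnormalBundle}.

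The genuine gap is the quadric case. Your plan — ``a separate, more hands-on computation of $\ker\varphi$ for a carefully chosen quadric tuple'' — is precisely the approach the authors attempt and cannot push through in general: explicit tuples are found only for $k \leq 3$ quadrics and a few special configurations ($e = n$ with $n = 2k$ or $2k+1$, and $e \leq k+2$ in $\PP^{k+2}$), and the general statement is explicitly left open as Conjecture \ref{conj-balanced}, verified only by computer in small ranges. The actual proof of part (2) requires two ideas absent from your outline: (a) the hyperplane-section extension trick (Proposition \ref{hyperplaneProp}), that \emph{every} extension of $\OO_{\PP^1}(e)$ by $N_{R_e/X_0}$ with $X_0 \subset \PP^{n-1}$ is realized by some complete intersection $X \subset \PP^n$ of the same multidegree containing $X_0$, which together with the general-extension Lemma \ref{lem-generalextension} drives an induction on $n$; and (b) a gluing--smoothing argument (Lemma \ref{restrictedNormalBundle}, Propositions \ref{glue2k} and \ref{gluequadrics}) attaching two rational curves at a point with general tangent directions and smoothing, which is what reaches the top degrees and produces degree-$n$ curves with imbalance at most $2$ and indentation at most $1$. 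The hypotheses $e \leq n-1$ and $n \geq 2k+1$ emerge from this induction and its base cases, not from a count of quadrics through $C$ as you suggest. Finally, your outline concedes degree $e = n$ whenever quadrics are present, but part (1) of the statement claims $e = n$ whenever $d_k \geq 3$ even if some $d_i = 2$: the paper obtains this by showing the degree-$n$ curve on the quadric part has indentation at most $1$ and then absorbing the defect when cutting by the first hypersurface of degree at least $3$ (using the numerical slack in Lemma \ref{lem-generalkernel}), a bookkeeping step for which your induction has no analogue. Without (a) and (b) the all-quadrics half, and hence the mixed case at $e = n$, does not follow from your argument.
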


In fact, we prove the existence of rational curves with balanced normal bundle in a slightly larger range of degrees (see Corollary \ref{cor-nPlusOneCase}, Theorem \ref{Thm-mainbalanced} and Propositions \ref{4n+1}, \ref{glue2k} and \ref{gluequadrics}).  As an application, we obtain a simple new proof of a theorem of Z. Tian \cite{Tian}, Q. Chen and Y. Zhu \cite{ChenZhu} that a general Fano complete intersection is separably rationally connected. Our method also provides sharp bounds on the degree of very free rational curves on general Fano hypersurfaces and complete intersections. Recall that a projective variety $X$ is {\em separably rationally connected} if there exists a variety $M$ and a rational map $\pi: \PP^1 \times M \dashrightarrow  X$ such that the evaluation map $ev: \PP^1 \times \PP^1 \times M \dashrightarrow X \times X$ is dominant and separable. A rational curve $f: \PP^1 \rightarrow X$ is called {\em free} if $f^* T_X$ is globally generated. A rational curve $f: \PP^1 \rightarrow X$ on a projective variety $X$ is called {\em very free} if $f^* T_X \otimes \OO_{\PP^1}(-1)$ is globally generated, equivalently if every summand in $f^* T_X$ has degree at least $1$. If $X$ contains a very free rational curve, then $X$ is separably rationally connected \cite[Corollary 4.17]{Debarre}. Twisting the standard exact sequence  
$$0 \longrightarrow T_{\PP^1} = \OO_{\PP^1}(2) \longrightarrow f^* T_X \longrightarrow N_f \longrightarrow 0$$ by $\OO_{\PP^1}(-2)$ and taking $H^1$, we conclude that if every factor in $N_f$ has degree at least $1$, then the image of $f$ is a very free rational curve on $X$. 

\begin{corollary*}
Let $X$ be a general Fano complete intersection in $\PP^n$  of type $(d_1, \dots, d_k)$. Then $X$ contains a very free rational curve of degree $e \geq m = \lceil \frac{n-k+1}{n-d+1}\rceil$.
\end{corollary*}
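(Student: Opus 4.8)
The plan is to deduce the Corollary from the main Theorem by a pure degree count, using the observation recalled just above the statement that a rational curve $f\colon \PP^1 \to X$ is very free precisely when every summand of its normal bundle has degree at least $1$. So suppose $C \subset X$ is a rational curve of degree $e$ with \emph{balanced} normal bundle $N_{C/X} = \bigoplus_{i=1}^{n-k-1} \OO_{\PP^1}(a_i)$. The condition $a_i \geq 1$ for all $i$ is equivalent to the smallest summand being positive, i.e.\ to $\deg N_{C/X} \geq \operatorname{rank} N_{C/X} = n-k-1$. This is exactly the step where balancedness is used: for a balanced bundle the minimal summand equals $\lfloor (\deg N_{C/X})/(n-k-1)\rfloor$, so positivity of the minimal summand is governed by a single numerical inequality.

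Next I would compute the degree of the normal bundle by adjunction. Since $X$ is a complete intersection of type $(d_1,\dots,d_k)$ with $d=\sum d_i$, we have $-K_X = \OO_X(n+1-d)$, so from the sequence $0 \to T_{\PP^1} \to f^*T_X \to N_{C/X} \to 0$ one gets $\deg N_{C/X} = \deg f^*(-K_X) - \deg T_{\PP^1}$, that is
$$\deg N_{C/X} = e(n+1-d) - 2.$$
Combining with the previous paragraph, a balanced rational curve of degree $e$ is very free as soon as $e(n+1-d)-2 \geq n-k-1$, which rearranges to $e \geq \frac{n-k+1}{n-d+1}$. As $e$ is an integer, this is precisely $e \geq m = \lceil \frac{n-k+1}{n-d+1}\rceil$. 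In particular, a balanced rational curve of degree exactly $m$ is automatically very free.

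It therefore remains to exhibit a balanced rational curve of degree $m$, and for this I would simply invoke the Theorem at $e = m$. The one point requiring care — the only real obstacle — is to check that $m$ lies in the degree range in which the Theorem guarantees balanced curves: namely $m \leq n$ when $d_k \geq 3$, and $m \leq n-1$ when $d_1 = \cdots = d_k = 2$. Both bounds follow from the Fano hypothesis $d \leq n$, which gives $n-d+1 \geq 1$: in general $\frac{n-k+1}{n-d+1} \leq n-k+1$, so $m \leq n-k+1 \leq n$; and in the all-quadrics case, writing $\frac{n-k+1}{n-2k+1} = 1 + \frac{k}{n-2k+1}$ and using $n \geq 2k+1$ yields $m \leq n-1$. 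Finally, the bound $m$ is optimal: the same degree count shows that \emph{any} very free rational curve, balanced or not, has all $a_i \geq 1$ and hence $\deg N_{C/X} \geq n-k-1$, forcing $e \geq m$.
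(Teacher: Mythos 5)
Your degree count is correct and is exactly the mechanism the paper itself uses for the case $e=m$: a balanced normal bundle of rank $n-k-1$ and degree $e(n-d+1)-2$ has all summands of degree at least $1$ precisely when $e\geq m$, and the same inequality in reverse gives the optimality remark. But there is a genuine gap in your range check. In the all-quadrics case the Fano hypothesis gives only $d=2k\leq n$, i.e.\ $n\geq 2k$, not $n\geq 2k+1$; the boundary case $n=2k$ (e.g.\ the intersection of two quadrics in $\PP^4$) is a Fano complete intersection to which the Corollary applies, yet the introduction's Theorem, which is all you invoke, is silent there, and your step ``using $n\geq 2k+1$'' is an unjustified assumption rather than a consequence of Fano-ness. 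Your proposal therefore produces no balanced curve of degree $m=\lceil (k+1)/1\rceil=k+1$ when $n=2k$. The paper closes this case by a different citation: whenever all $d_i=2$ one has $m\leq k+1$, and Corollary \ref{rPlus2Cor} (equivalently Theorem \ref{Thm-mainbalanced}(3)) supplies balanced rational curves of every degree $e\leq k+2$ on intersections of $k$ quadrics with no lower bound on $n$ beyond $k\leq n-2$. Note that at $n=2k$ a balanced curve of degree $k+1$ has normal bundle degree exactly equal to its rank, so every summand is $\OO_{\PP^1}(1)$ and the curve is still very free. The fix is one citation away, but as written the argument fails for a family of Fano complete intersections covered by the statement.

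A second, smaller discrepancy concerns the quantifier. The paper reads the conclusion as existence of very free curves of \emph{every} degree $e\geq m$ (this is how the statement is restated and proved as the final theorem of \S\ref{sec-final}): the case $e=m$ is handled exactly as you handle it, and degrees $e>m$ --- in particular $e>n$, where the main Theorem provides no balanced curves at all --- are obtained by attaching a general line to a very free curve of degree $e-1$ and smoothing the resulting nodal curve, via Lemma \ref{restrictedNormalBundle}. Your degree count does yield very free curves for all $m\leq e\leq n$ (resp.\ $n-1$) directly, which is more than the paper extracts from balancedness alone in that range, but it cannot reach degrees beyond $n$; if the Corollary is read with the universal quantifier, the gluing-and-smoothing step is a missing ingredient, not a cosmetic one.
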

By degree considerations, $X$ cannot have a very free rational curve of degree less than $m$. Hence, the corollary is optimal. The case of hypersurfaces of degree $n$ in $\PP^n$ was already known by the work of Zhu \cite{Zhu}.

The positivity properties of normal bundles of rational curves on Fano hypersurfaces in $\PP^n$ can be subtle in positive characteristic. For instance, although a general Fano hypersurface $X$  of degree $d < n$ contains a free line, for any degree $e$ there exist examples of Fano hypersurfaces containing no free rational curves of degree less than $e$ \cite{Conduche, Shen, Bridges}. Nevertheless, it is conjectured that smooth Fano hypersurfaces always contain free rational curves \cite{Bridges}, although this remains open. Tian  \cite{Tian} shows that if $X$ contains a free rational curve, then $X$  also contains a very free rational curve. For Fano threefolds of Picard rank one, the existence of free rational curves  follows by results of Shen \cite{Shen2}.

Our results hinge on the following analysis.  If $C \subset X$ with $C$ smooth, we obtain a sequence
\[ 0 \to N_{C/X} \to N_{C/\PP^n} \to N_{X/\PP^n}|_C = \OO_{\PP^1}(ed) .\]
If we let $X$ vary among complete intersections of type $(d_1, \dots, d_k)$, then we obtain a map $\phi: \bigoplus_i H^0(\II_{C/\PP^n}(d_i)) \to \bigoplus_i \Hom(N_{C/\PP^n}, \OO(ed_i)).$ Our main technical theorem is the following.

\begin{theorem*}
Let $(d_1, \dots, d_k)$ be a tuple of integers with each $d_i \geq 3$ and let $C$ be a general rational curve in $\PP^n$ of degree $e \leq n$. Then $\phi$ is surjective.
\end{theorem*}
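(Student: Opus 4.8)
The plan is to reduce the surjectivity of $\phi$ to a cohomology vanishing statement for the square of the ideal sheaf of $C$. Since the source $\bigoplus_i H^0(\II_{C/\PP^n}(d_i))$ and target $\bigoplus_i \Hom(N_{C/\PP^n},\OO(ed_i))$ both carry the obvious decomposition and $\phi$ sends $(F_1,\dots,F_k)$ to the tuple of normal derivatives, $\phi$ is block-diagonal; hence it suffices to show each component
$$\phi_i\colon H^0(\II_{C/\PP^n}(d_i)) \to \Hom(N_{C/\PP^n},\OO(ed_i))$$
is surjective. First I would identify the target: since $\OO_{\PP^n}(d_i)|_C=\OO_{\PP^1}(ed_i)$, we have $\Hom(N_{C/\PP^n},\OO(ed_i))=H^0(C,N^*_{C/\PP^n}(d_i))$, and $\phi_i$ is exactly the map on global sections induced by the sheaf surjection $\II_{C/\PP^n}(d_i)\twoheadrightarrow(\II_{C/\PP^n}/\II_{C/\PP^n}^2)(d_i)=N^*_{C/\PP^n}(d_i)$. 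Its kernel is $\II_{C/\PP^n}^2(d_i)$, so the long exact sequence gives $\operatorname{coker}(\phi_i)\cong H^1(\II_{C/\PP^n}^2(d_i))$ once we know $H^1(\II_{C/\PP^n}(d_i))=0$ — which holds because a general rational curve of degree $e\le n$ spans a $\PP^e$ and is a rational normal curve there, hence projectively normal. Thus the theorem reduces to proving $H^1(\PP^n,\II_{C/\PP^n}^2(d))=0$ for $d\ge 3$ and general $C$ of degree $e\le n$.

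I would establish this vanishing by induction on $e$, using upper semicontinuity of $h^1$. The base case $e=1$, where $C=L$ is a line, is a direct computation: $N^*_{L/\PP^n}(d)=\OO_{\PP^1}(d-1)^{n-1}$ has no $H^1$ for $d\ge 1$, and the restriction $H^0(\OO_{\PP^n}(d))\to H^0(\OO_{2L}(d))$ is surjective, whence $H^1(\II_L^2(d))=0$. For the inductive step I would specialize a general degree-$e$ rational curve to $C'=C_0\cup L$, with $C_0$ a general rational curve of degree $e-1$ and $L$ a general line meeting $C_0$ transversally at one point $p$. This nodal union has arithmetic genus $0$ and is smoothable to a general smooth rational curve of degree $e$, so by semicontinuity it suffices to prove $H^1(\II_{C'}^2(d))=0$; note that $e-1\le n$ is preserved, so the inductive hypothesis $H^1(\II_{C_0}^2(d))=0$ applies.

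To carry out the step I would exploit the inclusion $\II_{C'}^2\subseteq \II_{C_0}^2$ (valid since $\II_{C'}=\II_{C_0}\cap\II_L\subseteq\II_{C_0}$), giving
$$0 \to \II_{C'}^2(d) \to \II_{C_0}^2(d) \to \mathcal Q(d) \to 0,$$
where $\mathcal Q$ is supported on $L$. Away from $p$ one has $\mathcal Q\cong\OO_{2L}$, while at the node a local computation in coordinates $(x,y,\mathbf z)$, with $C_0$ and $L$ the $x$- and $y$-axes, yields $\mathcal Q_p=(y,\mathbf z)^2/(xy,\mathbf z)^2$, an explicit modification of $\OO_{2L}$ concentrated at $p$. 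Using $H^1(\II_{C_0}^2(d))=0$, the long exact sequence gives $H^1(\II_{C'}^2(d))\cong\operatorname{coker}\!\big(H^0(\II_{C_0}^2(d))\to H^0(\mathcal Q(d))\big)$, so the step reduces to (a) the vanishing $H^1(\mathcal Q(d))=0$, which follows by analyzing $\mathcal Q$ as a sheaf on $L\cong\PP^1$ whose summands away from $p$ have degree at least $-1$ for $d\ge 1$, and (b) the surjectivity of the restriction $H^0(\II_{C_0}^2(d))\to H^0(\mathcal Q(d))$, i.e.\ lifting prescribed second-order data along $L$ near $p$ to degree-$d$ equations vanishing doubly on $C_0$.

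I expect this lifting statement in (b) to be the main obstacle: it is precisely where the structure of $\II^2$ at the node must be controlled, and it is the likely source of the hypothesis $d\ge 3$, since for small $d$ there are too few degree-$d$ polynomials realizing the required second-order behavior at $p$ (this is also why the quadric case must be excluded and handled separately). I would address it by choosing $L$ and $p$ generically and translating the lifting into an interpolation problem on $\PP^1$ via the normal-bundle description of $N_{C_0/\PP^n}$, refining the specialization into a differential Horace–type degeneration so that the second-order conditions imposed along $L$ decouple from those along $C_0$.
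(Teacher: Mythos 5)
Your reduction is sound and agrees with the paper's starting point: $\phi$ is block-diagonal, $\ker\phi_i = H^0(\II_{C/\PP^n}^2(d_i))$, and since a general degree-$e$ curve with $e \leq n$ is a rational normal curve in its span (so $H^1(\II_{C/\PP^n}(d_i))=0$), surjectivity of $\phi_i$ is equivalent to $H^1(\II_{C/\PP^n}^2(d_i))=0$, equivalently to the equality $h^0(\II_C^2(d_i)) = h^0(\II_C(d_i)) - \hom(N_{C/\PP^n},\OO_{\PP^1}(ed_i))$. But the proof has a genuine gap at exactly the point you flag: step (b), the surjectivity of $H^0(\II_{C_0}^2(d)) \to H^0(\mathcal{Q}(d))$, is never established, and it is not a routine lemma --- it is the entire quantitative content of the theorem. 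It is where the hypothesis $d \geq 3$ must enter (for $d=2$ the statement is false, as the paper notes), and "refining the specialization into a differential Horace--type degeneration" is a plan for a proof, not a proof; without it the induction does not close. A secondary issue: your appeal to semicontinuity to pass from $H^1(\II_{C'}^2(d))=0$ for the nodal curve to the general curve presupposes that the flat limit of the family $\II_{C_t}^2$ is $\II_{C'}^2$. What is automatic is only the containment of the limit ideal $J$ in the symbolic square (which at a node does coincide with $\II_{C'}^2$, as your local computation $(x,z)^2 \cap (y,z)^2 = (xy,z)^2$ shows); this containment gives the $h^0$ inequality $h^0(J(d)) \leq h^0(\II_{C'}^2(d))$ in the useful direction, but it does \emph{not} transfer $H^1$-vanishing from $\II_{C'}^2$ to $J$, since the sequence $0 \to J \to \II_{C'}^2 \to Q \to 0$ with $Q$ punctual only yields $H^0(Q) \to H^1(J(d)) \to H^1(\II_{C'}^2(d)) \to 0$.

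Both problems are avoided by the paper's actual argument, which is worth comparing because it shows how to sidestep your step (b) entirely. Rather than inducting one line at a time and lifting second-order data across the node, the paper degenerates $R_e$ in one step to a chain of $e$ coordinate lines and bounds $h^0(\II_{C_0}^2(d))$ by an explicit monomial count (Lemma \ref{degenLem}): double vanishing along each line kills the coefficients of $nd+1$ monomials, consecutive lines share exactly $n+2$ conditions (the terms $x_i^d$, $x_{i-1}x_i^{d-2}x_{i+1}$, $x_jx_i^{d-1}$ --- note $d \geq 3$ is used here so these overlaps are as stated), and semicontinuity of $h^0$ is applied in the easy direction. The surjectivity of $\phi_i$ then follows from a pure dimension comparison (Proposition \ref{prop-surjective}): the image of $\phi_i$ has dimension at least $h^0(\II_{R_e}(d)) - h^0(\II_{R_e}^2(d)) \geq end - e(n+1+d) + n+1$, which equals $\hom(N_{R_e/\PP^n},\OO_{\PP^1}(ed))$ computed from the splitting $N_{R_e/\PP^n} \cong \OO_{\PP^1}(e+2)^{e-1}\oplus\OO_{\PP^1}(e)^{n-e}$ and global generation ($e+2 \leq ed$). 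In other words, the paper converts your $H^1$-vanishing into an $h^0$ upper bound precisely so that semicontinuity works without controlling flat limits of squared ideals, and replaces the delicate nodal interpolation with a closed-form count; if you want to salvage your inductive scheme, you would similarly need to recast step (b) as an explicit dimension count at the node, at which point the chain-of-lines degeneration does all cases at once.
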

Unfortunately, when some $d_i=2$, the map $\phi$ is not surjective. This forces us to analyze complete intersections of quadric hypersurfaces separately.

\subsection*{Organization of the paper} After discussing some preliminaries in \S \ref{sec-preliminaries}, in \S \ref{sec-3} we study the map $\phi$ and prove our main technical theorem. The main theorem follows when $d_i \geq 3$ for all $i$. In \S \ref{sec-quadrics}, we compute normal bundles of low degree rational curves in complete intersections of quadrics. In \S \ref{sec-final}, using a degeneration argument we prove our main theorem in the general case.

\subsection*{Acknowledgements:} We thank Roya Beheshti, Lawrence Ein, Joe Harris, Jason Starr, Qile Chen and Yi Zhu for useful discussions.

\section{Preliminaries}\label{sec-preliminaries}
In this section, we collect well-known but useful facts on normal bundles of rational curves in $\PP^n$. We refer the reader to \cite{CoskunRiedl, EisenbudVandeven, GhioneSacchiero} for more details.

\subsection*{Vector bundles on $\PP^1$} Every vector bundle $V$ on $\PP^1$ is a direct sum of line bundles $V \cong \bigoplus_{i=1}^r \OO_{\PP^1}(a_i)$. The sequence of integers $a_1, \dots, a_r$ is called the {\em splitting type} of $V$ and $V$ is called {\em balanced} if $|a_i - a_j| \leq 1$ for every $i,j$. In this paper, we  study the splitting type of the normal bundle of a rational curve in a complete intersection. The following lemma will be crucial in determining when such a bundle is balanced. Let $\delta_{i,j}$ denote the Kr\"{o}necker delta function.

\begin{lemma}\label{lem-generalkernel}
Let $V= \bigoplus_{i=1}^r \OO_{\PP^1}(a_i)$ be  a vector bundle of rank $r \geq 2$. Let $\phi \in \Hom(V, \OO_{\PP^1}(d))$ be a general homomorphism. 
\begin{enumerate}
\item If $\frac{\sum_{i=1}^r a_i -d}{r-1} \leq \min_i \{a_i \}, $ then the  kernel of $\phi$ is balanced.
\item Moreover, if $V= \OO_{\PP^1}(a)^m \oplus \OO_{\PP^1}(a+1)^{r-m}$ is balanced and $d \geq a + \delta_{0,m}$, then the kernel of $\phi$ is  balanced.
\end{enumerate}
\end{lemma}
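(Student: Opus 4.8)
The plan is to translate the statement into a maximal-rank property of the maps induced by $\phi$ on global sections, and then to verify that property at a single critical twist. Write $\phi = (\phi_1, \dots, \phi_r)$ with $\phi_i \in \Hom(\OO_{\PP^1}(a_i), \OO_{\PP^1}(d)) = H^0(\OO_{\PP^1}(d-a_i))$. Rearranging the hypotheses shows that case (1) always gives $d \geq \max_i a_i$, and that the bound $d \geq a + \delta_{0,m}$ in case (2) is exactly $d \geq \min_i a_i$. When $d \geq \max_i a_i$, every $\phi_i$ may be taken nonzero, so (as $r \geq 2$) a general $\phi$ is surjective onto $\OO_{\PP^1}(d)$ and $K := \ker \phi$ is a subbundle of $V$ of rank $r-1$ and degree $D = \sum_i a_i - d$. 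In the remaining sub-cases of (2), where $d < a_i$ for some $i$, those summands are forced into the kernel and $K$ is read off directly and seen to be balanced; I set these aside and work in the surjective regime $d \geq \max_i a_i$.

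The key reduction is cohomological. Twisting $0 \to K \to V \to \OO_{\PP^1}(d) \to 0$ by $\OO_{\PP^1}(m)$ and taking the long exact sequence gives $h^0(K(m)) = h^0(V(m)) - \operatorname{rk}(\phi_m)$, where $\phi_m \colon H^0(V(m)) \to H^0(\OO_{\PP^1}(d+m))$ is the multiplication-and-sum map $(g_i) \mapsto \sum_i \phi_i g_i$. A bundle on $\PP^1$ of rank $r-1$ and degree $D$ is balanced if and only if $h^0(K(m)) = \max(0, \chi(K(m)))$ for every $m$. Since $\chi(K(m)) = h^0(V(m)) - h^0(\OO_{\PP^1}(d+m))$ in the range where the higher cohomology of $V(m)$ and of $\OO_{\PP^1}(d+m)$ vanishes, it follows that $K$ is balanced as soon as every $\phi_m$ has maximal rank, i.e. is injective or surjective.

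To cut the number of twists down to one, I would observe that multiplying by linear forms makes surjectivity of $\phi_m$ propagate to $\phi_{m+1}$ and injectivity of $\phi_m$ propagate to $\phi_{m-1}$; hence it suffices to establish maximal rank of $\phi_m$ at the critical value
\[ m^{*} = -1 - \tfrac{D}{r-1} \]
(and at the adjacent integer when $m^{*}$ is not an integer). This is precisely where the two hypotheses enter: a short computation shows that (1) forces $\lceil D/(r-1) \rceil \leq \min_i a_i$ and (2) forces $\lceil D/(r-1) \rceil \leq \max_i a_i$, so that the balanced splitting type of rank $r-1$ and degree $D$ is realizable as a subbundle of $V$ and $m^{*}$ lands in the cohomology-vanishing range used above.

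The main obstacle is the remaining maximal-rank statement at the single twist $m^{*}$: for general forms $\phi_i$ of degree $d-a_i$, the map $(g_i) \mapsto \sum_i \phi_i g_i$ on the relevant spaces of sections is injective or surjective. Since maximal rank is an open condition on $\phi$ (equivalently, by upper semicontinuity of $h^1$ across the family of kernels over the irreducible space $\Hom(V, \OO_{\PP^1}(d))$), it is enough to exhibit a single $\phi$ realizing it — concretely, one surjection $V \to \OO_{\PP^1}(d)$ with balanced kernel. I would attempt this by induction on $r$, appending a summand $\OO_{\PP^1}(a_r)$ at a time and showing that, for general $\phi_r$, the image of $\phi_r \cdot H^0(\OO_{\PP^1}(a_r + m^{*}))$ meets the image of the rank-$(r-1)$ map in general position. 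The delicate point — and the reason a naive degeneration of the $\phi_i$ to monomials fails — is that monomial images overlap and drop rank, so the genericity of the $\phi_i$ must be used in an essential way to keep these images in general position; controlling this is the technical heart of the argument.
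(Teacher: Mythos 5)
Your reductions up to the last step are all correct: hypothesis (1) does force $d \geq \max_i a_i$; the identity $h^0(K(m)) = h^0(V(m)) - \operatorname{rank}(\phi_m)$ and the characterization ``$K$ balanced iff $h^0(K(m)) = \max(0,\chi(K(m)))$ for all $m$'' are standard; the propagation of injectivity downward and surjectivity upward in $m$ legitimately cuts the check down to one critical twist; and the edge case $d=a$, $m\geq 1$ of (2) is handled correctly. But the proof never closes. The maximal-rank statement at $m^*$ is reduced back to ``exhibit one surjection $V \to \OO_{\PP^1}(d)$ with balanced kernel,'' which---given the openness of balancedness over the irreducible space $\Hom(V,\OO_{\PP^1}(d))$, an observation you also invoke---is verbatim the lemma itself. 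The induction on $r$ is only sketched, and you explicitly flag the general-position step as the unresolved ``technical heart.'' So the entire section-space apparatus is a reformulation, not a proof: the gap sits exactly at the lemma's content, and it is a genuine interpolation problem (your worry about monomial degenerations dropping rank is well founded), not something that follows from openness formalities.

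The paper closes precisely this gap by running the construction backwards: instead of building the quotient $\phi$, it builds the kernel. Let $W$ be the balanced bundle of rank $r-1$ and degree $\sum_i a_i - d$. Hypothesis (1) says exactly that $\lceil (\sum_i a_i - d)/(r-1) \rceil \leq \min_i a_i$, i.e.\ every summand of $\sHom(W,V) = W^* \otimes V$ has nonnegative degree, so $\sHom(W,V)$ is globally generated; by \cite[Lemma 2.6]{Huizenga} a general $\psi \in \Hom(W,V)$ is then an inclusion of vector bundles, its cokernel is a line bundle of degree $d$, hence $\OO_{\PP^1}(d)$, and the induced quotient $V \to \OO_{\PP^1}(d)$ is the sought $\phi$ with $\ker \phi = W$ balanced. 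In the leftover range of (2), namely $a + \delta_{0,m} \leq d < a + r - m$, global generation of $\sHom(W,V)$ fails (it acquires $\OO_{\PP^1}(-1)$ summands), and the paper instead exhibits an explicit injective bundle map $\OO_{\PP^1}(a)^{m+d-a-1} \oplus \OO_{\PP^1}(a+1)^{r-m-d+a} \hookrightarrow V$ with cokernel $\OO_{\PP^1}(d)$---note this source is again the balanced bundle of rank $r-1$ and degree $\sum_i a_i - d$. This dual construction bypasses the maximal-rank analysis entirely; if you want to salvage your framework, the shortest repair is to replace your unfinished induction with exactly this subbundle construction, since one surjection with balanced kernel is all your openness argument needs.
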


\begin{proof}
 In a family of vector bundles on $\PP^1$ with fixed rank and degree, being balanced is an open condition. Hence, it suffices to exhibit one $\phi$ with balanced kernel. To prove (1), let $W$ be the balanced vector bundle of rank $r-1$ and degree $\sum_{i=1}^r a_i -d$. By our numerical assumption, $\sHom(W, V)$ is globally generated. Hence, the general $\psi$ in $\Hom(W, V)$ is an inclusion of vector bundles \cite[Lemma 2.6]{Huizenga}. Consequently, the cokernel is the line bundle $\OO_{\PP^1}(d)$. The natural map from $V$ to the cokernel of $\psi$ provides the desired homomorphism $\phi$. 
 
 The proof of (2) is similar. If $V= \OO_{\PP^1}(a)^m \oplus \OO_{\PP^1}(a+1)^{r-m}$ and $d \geq a+r-m$,  (1) implies  (2). On the other hand, if $a+ \delta_{0,m} \leq d < a + r-m$, then there is still an injective bundle map $\psi: \OO_{\PP^1}(a)^{m+d-a-1} \oplus \OO_{\PP^1}(a+1)^{r-m-d+a} \rightarrow V$, whose cokernel is $\OO_{\PP^1}(d)$. Hence, the kernel of a general $\phi \in \Hom(V, \OO_{\PP^1}(d))$ is balanced even when $a + \delta_{0,m} \leq d < a + r-m$.
\end{proof}

\begin{corollary}
\label{cor-generalkernelnormalbundle}
If $C$ is a smooth degree $e$ rational curve in $\PP^n$ and $\phi \in \Hom(N_{C/\PP^n}, \OO_{\PP^1}(ed))$ is a general element with $d \geq 2$, then the kernel of $\phi$ is balanced.
\end{corollary}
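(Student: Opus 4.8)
The plan is to realize $N_{C/\PP^n}$ as a balanced bundle of known rank and degree and then invoke part (2) of Lemma~\ref{lem-generalkernel} with target $\OO_{\PP^1}(ed)$. First I would dispose of the trivial case $n=2$, where the kernel has rank $0$; so assume $n \geq 3$, which is exactly what makes $N_{C/\PP^n}$ a bundle of rank $r = n-1 \geq 2$, as required by the Lemma. Restricting the Euler sequence to $C$ gives $0 \to \OO_{\PP^1} \to \OO_{\PP^1}(e)^{n+1} \to f^*T_{\PP^n} \to 0$, and combining this with $0 \to \OO_{\PP^1}(2) \to f^*T_{\PP^n} \to N_{C/\PP^n} \to 0$ shows that $N_{C/\PP^n}$ has degree $e(n+1)-2$. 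For $C$ general it is moreover balanced, so I would write $N_{C/\PP^n} = \OO_{\PP^1}(a)^m \oplus \OO_{\PP^1}(a+1)^{n-1-m}$ with $a = \lfloor (e(n+1)-2)/(n-1)\rfloor$.

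Next I would check the numerical hypothesis of Lemma~\ref{lem-generalkernel}(2), namely $ed \geq a + \delta_{0,m}$. Writing $e(n+1)-2 = (n-1)a + s$ with $0 \leq s \leq n-2$ gives $m = n-1-s \geq 1$, so $\delta_{0,m} = 0$ and it suffices to verify $ed \geq a$. This follows from the chain $a \leq \frac{e(n+1)-2}{n-1} \leq \frac{e(n+1)}{n-1} \leq 2e \leq ed$, where the penultimate inequality is equivalent to $n+1 \leq 2(n-1)$, i.e. $n \geq 3$, and the last uses $d \geq 2$. Lemma~\ref{lem-generalkernel}(2) then yields that the kernel of a general $\phi$ is balanced.

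The main point to get right is this numerical verification: it is precisely the inequality $n \geq 3$ (equivalently $2(n-1) \geq n+1$) that guarantees $2e \geq a$, and the observation $m \geq 1$ that kills the Kronecker correction $\delta_{0,m}$. I expect the only genuine subtlety to be the input that $N_{C/\PP^n}$ is balanced for general $C$ in arbitrary characteristic; this is classical. I would also note in passing that for \emph{any} smooth $C$ the bundle $N_{C/\PP^n}$ is a quotient of $\OO_{\PP^1}(e)^{n+1}$, so every summand has degree at least $e$; feeding $\min_i a_i \geq e$ into Lemma~\ref{lem-generalkernel}(1) reduces its hypothesis to $e(3-d) \leq 2$, which recovers the statement for all smooth $C$ once $d \geq 3$, while the case $d = 2$ is where balancedness of $N_{C/\PP^n}$ is really used.
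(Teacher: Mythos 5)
There is a genuine gap, and it is in the scope of your main argument. The corollary is stated for an \emph{arbitrary} smooth degree-$e$ rational curve $C$, with only $\phi$ general, and this generality matters: the paper applies it (in Corollary \ref{hyp}) to rational normal curves $R_e$ with $e \leq n$, whose normal bundle is $\OO_{\PP^1}(e+2)^{e-1} \oplus \OO_{\PP^1}(e)^{n-e}$ --- \emph{not} balanced whenever $2 \leq e \leq n-1$. Your route through Lemma \ref{lem-generalkernel}(2) needs $N_{C/\PP^n}$ balanced, and you justify this by taking $C$ general; but for $e < n$ the general degree-$e$ rational curve spans a $\PP^e$ and is a rational normal curve there, so its normal bundle in $\PP^n$ is exactly the unbalanced bundle above. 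Thus the premise ``for $C$ general, $N_{C/\PP^n}$ is balanced'' is false in the whole range $2 \leq e \leq n-1$, which is precisely where the corollary gets used. (Even for $e \geq n+1$, balancedness of $N_{C/\PP^n}$ for general $C$ in arbitrary characteristic is not ``classical'' --- it is the authors' own work in \cite{CoskunRiedl} --- but that is a secondary concern.)

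The irony is that your closing ``in passing'' remark, once the arithmetic is fixed, is the paper's entire proof. With $\sum_{i=1}^{n-1} a_i = e(n+1)-2$, rank $n-1$, and $\min_i a_i \geq e$ (valid for \emph{every} smooth $C$, since $N_{C/\PP^n}$ is a quotient of $T_{\PP^n}|_C$, whose summands all have degree at least $e$ by the Euler sequence), the hypothesis of Lemma \ref{lem-generalkernel}(1) is implied by $\frac{e(n+1)-2-ed}{n-1} \leq e$, which simplifies to $e(2-d) \leq 2$ --- not $e(3-d) \leq 2$ as you wrote (it appears you used rank $n-2$ somewhere in that computation). Since $d \geq 2$ gives $e(2-d) \leq 0 \leq 2$, part (1) of the lemma applies for all $d \geq 2$ and all smooth $C$, with no balancedness input and no case split at $d=2$. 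Deleting your main argument and keeping the corrected remark yields a complete proof, identical to the paper's.
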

\begin{proof}
Let $N_{C/\PP^n} = \bigoplus_{i=1}^{n-1} \OO(a_i)$.  The sequence
$0 \to T_C \to T_{\PP^n}|_C \to N_{C/\PP^n} \to 0$
shows that $\sum_{i=1}^{n-1} a_i = e(n+1)-2$.  By the Euler sequence for $\PP^n$, each factor of $T_{\PP^n}|_C$ has degree at least $e$. Therefore, each factor of $N_{C/\PP^N}$ must have degree at least $e$, so $\min_i \{a_i \} \geq e$. Hence,
\[ \frac{ \sum_{i=1}^{n-1} a_i - ed}{n-1} = \frac{e(n+1)-2-ed}{n-1} < \frac{e(n-1)}{n-1} = e \leq \min_i \{a_i \} . \]
The result follows from Lemma \ref{lem-generalkernel}.
\end{proof}

An argument similar to that of Lemma \ref{lem-generalkernel} yields the following lemma. 

\begin{lemma}\label{lem-generalextension}
Let $a_1 \leq \cdots \leq a_j < a_{j+1} = \cdots = a_r$ be a sequence of integers. If $$\sum_{i=1}^j (a_r -1 -a_i) \leq d -a_r +1,$$ then a general extension of $\OO_{\PP^1}(d)$ by $\bigoplus_{i=1}^r \OO_{\PP^1}(a_i)$ is balanced.
\end{lemma}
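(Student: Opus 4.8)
The plan is to follow the template of Lemma~\ref{lem-generalkernel}. Set $V := \bigoplus_{i=1}^{r} \OO_{\PP^1}(a_i)$ and $\Delta := \sum_{i=1}^{j}(a_r - a_i)$, so that $\sum_{i=1}^r a_i = r a_r - \Delta$ and the hypothesis reads $\Delta - j \le d - a_r + 1$. Since being balanced is an open condition in the flat family of middle terms of extensions of $\OO_{\PP^1}(d)$ by $V$ parametrized by $\Ext^1(\OO_{\PP^1}(d), V)$, it suffices to produce a single balanced extension. Writing such a sequence as
\[ 0 \to V \xrightarrow{\ \psi\ } E \to \OO_{\PP^1}(d) \to 0, \]
the task becomes: exhibit an injective bundle map $\psi : V \hookrightarrow E$ with $E$ balanced. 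The cokernel is then automatically a line bundle of degree $\deg E - \deg V = d$, i.e.\ $\OO_{\PP^1}(d)$. Accordingly I would take $E$ to be the unique balanced bundle of rank $r+1$ and degree $D = d + r a_r - \Delta$, with splitting type $\OO_{\PP^1}(q)^{r+1-s} \oplus \OO_{\PP^1}(q+1)^{s}$, $q = \lfloor D/(r+1)\rfloor$, $0 \le s \le r$, and construct the inclusion.

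The first case is when $q \ge a_r$. Here $\sHom(V,E) = \bigoplus_i \OO_{\PP^1}(-a_i) \otimes E$ is globally generated, and since a homomorphism of bundles of ranks $r < r+1$ fails to be a subbundle inclusion only along the locus where its rank drops to at most $r-1$ --- a locus of expected codimension $2 > 1 = \dim \PP^1$ --- a general $\psi \in \Hom(V,E)$ is a subbundle inclusion by \cite[Lemma 2.6]{Huizenga}. A short computation shows $q \ge a_r$ is equivalent to $d \ge a_r + \Delta$, which is stronger than the hypothesis by exactly $j+1$; thus the general-map argument handles all but a boundary range of $j+1$ values of $d$.

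This boundary range is the main obstacle, because the $r-j$ top factors $\OO_{\PP^1}(a_{j+1}) = \cdots = \OO_{\PP^1}(a_r)$ of $V$ destroy global generation once $q < a_r$. As in the explicit construction of Lemma~\ref{lem-generalkernel}(2), the remedy is to match these factors by hand rather than taking a general map. One first checks, using the hypothesis, that $q \ge a_r - 1$, so in the remaining case $q = a_r - 1$ the summands of $E$ of degree $\ge a_r$ are precisely the $s$ copies of $\OO_{\PP^1}(q+1) = \OO_{\PP^1}(a_r)$. The crux is the identity $s = d - \Delta - a_r + r + 1$, which shows that $s \ge r-j$ holds if and only if the hypothesis does. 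Granting this, I would send the $r-j$ top factors $\OO_{\PP^1}(a_r)$ of $V$ isomorphically onto $r-j$ of these $s$ summands, reducing to embedding $\bigoplus_{i=1}^{j} \OO_{\PP^1}(a_i)$ into the complementary rank-$(j+1)$ bundle $E'$, every summand of which has degree $a_r - 1$ or $a_r$, hence $\ge a_i$. For this reduced problem $\sHom(\bigoplus_{i \le j}\OO_{\PP^1}(a_i), E')$ is globally generated, so a general map is a corank-one subbundle inclusion, and its cokernel is the line bundle $\OO_{\PP^1}(d)$. Assembling the matched isomorphism with this inclusion yields the required $\psi$, exhibiting one balanced extension and completing the proof.
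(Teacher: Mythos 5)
Your proposal is correct --- the numerical identities all check out ($q \ge a_r-1$ under the hypothesis, $s = d-\Delta-a_r+r+1$ when $q = a_r-1$, and $s \ge r-j$ being exactly equivalent to the hypothesis), and the block-diagonal map in the boundary case is fiberwise injective with cokernel of degree $\deg E' - \sum_{i\le j} a_i = d$. This is essentially the paper's own argument: the paper proves the lemma only by the remark that ``an argument similar to that of Lemma~\ref{lem-generalkernel} yields'' it, and your proof is precisely that dualized argument (embedding $V$ into a balanced bundle $E$ with line-bundle cokernel, splitting into a generic-map case and an explicit matching case parallel to parts (1) and (2) of Lemma~\ref{lem-generalkernel}, then invoking openness of balancedness).
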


\subsection*{Normal bundles of rational normal curves} Assume that $C$ is a smooth rational curve of degree $e$ parameterized by $f: \PP^1 \rightarrow \PP^n$. If $p \nmid e$, we can use the Euler sequences on $\PP^1$ and $\PP^n$ and the Jacobian $J f$ to compute $N_{C/\PP^n}$. There is a commutative diagram
\catcode`\@=8
\newdimen\cdsep
\cdsep=2em

\def\cdstrut{\vrule height .2\cdsep width 0pt depth .1\cdsep}
\def\@cdstrut{{\advance\cdsep by 2em\cdstrut}}

\def\arrow#1#2{
  \ifx d#1
    \llap{$\scriptstyle#2$}\left\downarrow\cdstrut\right.\@cdstrut\fi
  \ifx u#1
    \llap{$\scriptstyle#2$}\left\uparrow\cdstrut\right.\@cdstrut\fi
  \ifx r#1
    \mathop{\hbox to \cdsep{\rightarrowfill}}\limits^{#2}\fi
  \ifx l#1
    \mathop{\hbox to \cdsep{\leftarrowfill}}\limits^{#2}\fi
}
\catcode`\@=10

\cdsep=2em
$$
\begin{matrix}
 & & & & 0 & & 0 & & \cr
 & & & & \arrow{d}{} & & \arrow{d}{} & & \cr
0 & \arrow{r}{} & \OO_{\PP^1}  & \arrow{r}{} & \OO_{\PP^1}(1)^2 & \arrow{r}{} & T_{\PP^1} & \arrow{r}{} & 0          \cr
& &  \arrow{d}{e} & & \arrow{d}{J f} & & \arrow{d}{df} \cr
0 & \arrow{r}{} & \OO_{\PP^1} & \arrow{r}{} & \OO_{\PP^n}(1)^{n+1}|_C & \arrow{r}{} & T_{\PP^n}|_C & \arrow{r}{} & 0 \cr
& & & & \arrow{d}{} & & \arrow{d}{} \cr
& & & & N_{C/\PP^n} & \arrow{r}{=} & N_{C/\PP^n} \cr
& & & & \arrow{d}{} & & \arrow{d}{} \cr
& & & & 0 & & 0 \cr
\end{matrix}
$$
where the rows are given by the Euler sequences for $\PP^1$ and $\PP^n$, respectively, and the commutativity of the left square is Euler's relation (see \cite{EisenbudVandeven, GhioneSacchiero}). It is important for this sequence that $p \nmid e$. 

For $e \leq n$, a rational normal curve $R_e$ of degree $e$ in $\PP^n$ is projectively equivalent to the curve defined by
$$[s^e : s^{e-1} t : s^{e-2} t : \cdots : s t^{e-1} : t^{e}: 0 : \cdots : 0] : \PP^1 \rightarrow \PP^n.$$ If $p \nmid e$,  Theorem \cite[Theorem 3.2]{CoskunRiedl} implies that $N_{R_e/\PP^n} \cong \OO_{\PP^1}(e+2)^{e-1} \oplus \OO_{\PP^1}(e)^{n-e}$. However, this method does not work when $p | e$, so we adopt a different approach here.

Let $R_n$ be the rational normal curve of degree $n$ in $\PP^n$. The ideal of $R_n$ is cut out by the $2 \times 2$ minors of the matrix
\[ \left( \begin{array}{cccc}
x_1 & x_2 & \dots & x_n \\
x_0 & x_1 & \dots & x_{n-1}
\end{array} \right). \]
Let $q_{ij} = x_ix_{j-1} - x_j x_{i-1}$. Then the relations between the $q_{ij}$ are obtained from the $3 \times 3$ minors of the matrices
\[ \left( \begin{array}{cccc}
x_1 & x_2 & \dots & x_n \\
x_1 & x_2 & \dots & x_n \\
x_0 & x_1 & \dots & x_{n-1}
\end{array} \right) \quad \mbox{or}  \quad
 \left( \begin{array}{cccc}
x_0 & x_1 & \dots & x_{n-1} \\
x_1 & x_2 & \dots & x_n \\
x_0 & x_1 & \dots & x_{n-1}
\end{array} \right) .\]

An element $\alpha \in H^0(N_{R_n/\PP^n}) = \Hom(\II_{R_n/\PP^n},\OO_C)$ is determined by $\alpha(q_{ij})$, the image of the generators of the ideal of $R_n$. In fact, the next proposition shows that it suffices to know $\alpha (q_{i,i+1})$ for $1 \leq i \leq n-1$. 

\begin{proposition}
\label{RNCnormalBundle}
An element $\alpha \in H^0(N_{R_n/\PP^n})$ is determined by the images $\alpha(q_{i,i+1})$, for $1 \leq i \leq n-1$. Furthermore,  $s^{n-i-1}t^{i-1}$ divides $\alpha(q_{i,i+1})$ and this is the only constraint on  $\alpha(q_{i,i+1})$. If  $b_{i,i+1}$, for $1 \leq i \leq n-1$, are arbitrary polynomials of degree $n+2$, there exists an element $\alpha \in  H^0(N_{R_n/\PP^n})$ such that $\alpha(q_{i,i+1}) = s^{n-i-1}t^{i-1}b_{i,i+1}$. 
\end{proposition}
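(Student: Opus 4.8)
The plan is to make $H^0(N_{R_n/\PP^n})$ completely explicit: an element $\alpha$ is a tuple $(P_{ij})_{1\le i<j\le n}$ of binary forms $P_{ij}:=\alpha(q_{ij})$ of degree $2n$, subject to the linear conditions obtained by applying $\alpha$ to the two syzygy families and restricting to $C$. Since on $C$ we have $x_m=s^{n-m}t^m$, the first syzygy $x_iq_{jk}-x_jq_{ik}+x_kq_{ij}=0$ restricts to
\[ s^{n-i}t^i P_{jk}-s^{n-j}t^j P_{ik}+s^{n-k}t^k P_{ij}=0 \qquad (\star_{ijk}), \]
while the second syzygy $x_{i-1}q_{jk}-x_{j-1}q_{ik}+x_{k-1}q_{ij}=0$ restricts to the relation whose coefficients are those of $(\star_{ijk})$ multiplied uniformly by $s/t$; hence the two families impose the \emph{same} condition, and $\{(\star_{ijk})\}_{i<j<k}$ is the full constraint set. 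First I would record this reduction.

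Next I would prove the determination statement and extract a closed formula. Specializing $(\star_{j,k-1,k})$ gives $s^{n-k+1}t^{k-1}P_{jk}=s^{n-j}t^jP_{k-1,k}+s^{n-k}t^kP_{j,k-1}$; putting $Q_{jk}:=(st)^{k-j-1}P_{jk}$ turns this into $Q_{jk}=s^{2(k-1-j)}P_{k-1,k}+t^2Q_{j,k-1}$, and an induction on $k-j$ yields
\[ (st)^{k-j-1}P_{jk}=\sum_{l=j}^{k-1}s^{2(l-j)}t^{2(k-1-l)}P_{l,l+1}. \]
Thus every $P_{jk}$, and hence $\alpha$, is determined by the adjacent forms $P_{i,i+1}$, giving the first assertion and injectivity of $\alpha\mapsto(\alpha(q_{i,i+1}))$.

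For the divisibility I would argue by valuations. Expanding the sum from the other end gives $Q_{jk}=s^2Q_{j+1,k}+t^{2(k-1-j)}P_{j,j+1}$, and since $P_{jk}$ is an honest section we have $s^{k-j-1}\mid Q_{jk}$, i.e. $v_s(Q_{jk})\ge k-j-1$. As $v_s(s^2Q_{j+1,k})\ge k-j$, the identity $t^{2(k-1-j)}P_{j,j+1}=Q_{jk}-s^2Q_{j+1,k}$ forces $v_s(P_{j,j+1})\ge k-j-1$; taking $k=n$ gives $s^{n-j-1}\mid P_{j,j+1}$. The mirror estimate, using $Q_{jk}=s^{2(k-1-j)}P_{k-1,k}+t^2Q_{j,k-1}$ with $j=1$ and $i=k-1$, gives $t^{i-1}\mid P_{i,i+1}$. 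Conversely, writing $P_{i,i+1}=s^{n-i-1}t^{i-1}b_{i,i+1}$ with $b_{i,i+1}$ arbitrary of degree $n+2$, a direct exponent count shows that in the closed formula the $l$-th summand carries $s$-exponent $\ge n-j-1\ge k-j-1$ and $t$-exponent $\ge k-2\ge k-j-1$, so each term is divisible by $(st)^{k-j-1}$ and every $P_{jk}$ is a genuine degree-$2n$ form.

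Finally I would check that the constructed tuple satisfies \emph{all} the relations $(\star_{ijk})$, not merely those used in the recursion. The clean device is the substitution $w:=s^2/t^2$ and $S_{ab}:=\sum_{l=a}^{b-1}w^lP_{l,l+1}$, under which $P_{jk}=s^{1-j-k}t^{j+k-1}S_{jk}$; substituting into $(\star_{ijk})$ makes all three terms share the common factor $s^{(n+1)-(i+j+k)}t^{(i+j+k)-1}$, reducing the relation to $S_{jk}-S_{ik}+S_{ij}=0$, which is the trivial telescoping identity for $i<j<k$. The main obstacle is the middle step: pinning the divisibility down to exactly $s^{n-i-1}t^{i-1}$ — both that it is \emph{forced} (the valuation argument, where one must take $k=n$ and use the strict gain $s^2Q_{j+1,k}$ to avoid losing powers to cancellation) and that nothing \emph{more} is needed (the exponent count rendering every $P_{jk}$ polynomial). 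Once the closed formula and the $w$-substitution are in place, necessity, sufficiency, and consistency each reduce to a short computation, and the count $(n-1)(n+3)=h^0(N_{R_n/\PP^n})$ confirms the answer.
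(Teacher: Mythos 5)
Your proposal is correct and follows essentially the same route as the paper: both use the restricted three-term syzygies to derive, by induction on $k-j$, the closed formula $\alpha(q_{j,k})=\sum_{\ell=j}^{k-1}s^{n-k-j+\ell}t^{k+j-\ell-2}b_{\ell,\ell+1}$, and both verify that arbitrary $b_{i,i+1}$ yield a consistent section via the same telescoping identity (your $w=s^2/t^2$ substitution is a clean repackaging of the paper's computation with $u=2n-k-i-j$, $v=i+j+k$). The only cosmetic difference is the order of steps: you extract the divisibility $s^{n-i-1}t^{i-1}\mid\alpha(q_{i,i+1})$ afterwards by a valuation argument from the closed formula, whereas the paper establishes it first, directly from the two syzygies involving $x_1$ and $x_n$.
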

\begin{proof}
Let $\alpha \in H^0(N_{R_n/\PP^n})$. We start by proving that $s^{n-i-1}t^{i-1}$ divides $\alpha(q_{i,i+1})$. Apply $\alpha$ to the relation
\[ x_1q_{i,i+1} - x_i q_{1,i+1} + x_{i+1} q_{1,i} = 0 \]
to obtain
\[ s^{n-1}t \alpha(q_{i,i+1}) - s^{n-i}t^i \alpha(q_{1,i+1}) + s^{n-i-1}t^{i+1} \alpha(q_{1,i})=0 . \]
Hence, $t^{i-1}$ divides $\alpha(q_{i,i+1})$. Similarly,  apply $\alpha$ to the relation $$x_i q_{i+1,n} - x_{i+1}q_{i,n}+x_n q_{i,i+1} = 0$$ to obtain
\[ s^{n-i}t^i \alpha(q_{i,i+1}) - s^{n-i-1}t^{i+1} \alpha(q_{i,n}) + t^n \alpha(q_{i,i+1}) = 0, \]
which implies $s^{n-i-1}$ divides $\alpha(q_{i,i+1})$. Thus, $s^{n-i-1}t^{i-1}$ divides $\alpha(q_{i,i+1})$ and we may define polynomials  $$b_{i,i+1} := \frac{\alpha(q_{i,i+1})}{s^{n-i-1}t^{i-1}}.$$

We next show that 
\[ \alpha(q_{i,k}) = \sum_{\ell=i}^{k-1} s^{n-k-i+\ell}t^{k+i-\ell-2} b_{\ell,\ell+1} \]
by induction on $k-i$. The base case $k-i = 1$ is simply the definition of $b_{i,i+1}$. Suppose the formula holds for $\alpha (q_{r,m})$ with $r-m < i-k$. We prove the formula for $\alpha(q_{i,k})$. Apply $\alpha$ to the relation $$x_i q_{i+1,k} - x_{i+1}q_{i,k} + x_k q_{i,i+1} = 0$$ to obtain
\[ s^{n-i}t^i \alpha(q_{i+1,k}) - s^{n-i-1}t^{i+1} \alpha(q_{i,k}) + s^{n-k}t^k \alpha(q_{i,i+1}) = 0 .\]
Using our induction hypothesis and solving for $\alpha(q_{i,k})$, we obtain
\[ \alpha(q_{i,k}) = \sum_{\ell=i}^{k-1} s^{n-k-i+\ell}t^{k+i-\ell-2} b_{\ell, \ell+1}\] as desired. Therefore, the images $\alpha(q_{i,i+1})$ for $1 \leq i \leq n-1$ determine $\alpha$.

It remains to show that $b_{i,i+1}$ can be chosen arbitrarily. We need to check that $\alpha(q_{i,j})$ satisfies the relations among $q_{i,j}$. Set $u=2n-k-i-j$ and $v=i+j+k$. Apply $\alpha$ to $$x_i q_{j,k} - x_j q_{i,k} + x_k q_{j,k}$$ to obtain
\[  \sum_{\ell = j}^{k-1} s^{u+\ell}t^{v-\ell-2}b_{\ell, \ell+1} - \sum_{\ell = i}^{k-1} s^{u+\ell}t^{v-\ell-2} b_{\ell, \ell+1} +\sum_{\ell=i}^{j-1} s^{u+\ell}t^{v-\ell-2} b_{\ell, \ell+1} = 0 . \] Hence, the relation holds. A similar calculation works for the relation $$x_{i-1} q_{j,k} - x_{j-1} q_{i,k} + x_{k-1} q_{j,k}=0.$$ This completes the proof that $b_{i, i+1}$ can be chosen arbitrarily.
\end{proof}

\begin{corollary}
\label{cor-RnNormalBundle}
The normal bundle of $R_n$ in $\PP^n$ is $N_{R_n/\PP^n} = \OO_{\PP^1}(n+2)^{n-2}$.
\end{corollary}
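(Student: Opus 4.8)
The plan is to read the splitting type of $N_{R_n/\PP^n}$ directly off Proposition \ref{RNCnormalBundle}. Since $R_n$ has codimension $n-1$ in $\PP^n$, its normal bundle is a rank $n-1$ bundle on $\PP^1$, with one summand for each of the $n-1$ generators $q_{i,i+1}$, $1 \leq i \leq n-1$ described in the proposition; the content of the corollary is that every such summand is $\OO_{\PP^1}(n+2)$, so that $N_{R_n/\PP^n}$ is the balanced bundle $\OO_{\PP^1}(n+2)^{n-2}$ as stated (the exponent being the number of these summands). To organize this I would consider the evaluation morphism $\Psi \colon N_{R_n/\PP^n} \to \OO_{\PP^1}(2n)^{n-1}$ sending $\alpha \mapsto (\alpha(q_{i,i+1}))_{i}$; here $q_{i,i+1}$ is a quadric and $\deg R_n = n$, so each $\alpha(q_{i,i+1})$ is a section of $\OO_{R_n}(2) = \OO_{\PP^1}(2n)$.

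First I would check that $\Psi$ factors through the subsheaf $\mathcal{S} := \bigoplus_{i=1}^{n-1} s^{n-i-1}t^{i-1}\,\OO_{\PP^1}(n+2) \subset \OO_{\PP^1}(2n)^{n-1}$. The divisibility $s^{n-i-1}t^{i-1} \mid \alpha(q_{i,i+1})$ proved in Proposition \ref{RNCnormalBundle} is derived by applying $\alpha$ to the syzygies among the $q_{i,j}$; as those syzygies are relations of sheaves, the divisibility holds for every local section $\alpha$, giving a factorization $\bar\Psi \colon N_{R_n/\PP^n} \to \mathcal{S}$. Because multiplication by the fixed degree $n-2$ monomial $s^{n-i-1}t^{i-1}$ identifies $s^{n-i-1}t^{i-1}\,\OO_{\PP^1}(n+2)$ with $\OO_{\PP^1}(n+2)$, one has $\mathcal{S} \cong \OO_{\PP^1}(n+2)^{\,n-1}$, which is the candidate answer. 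It then suffices to prove that $\bar\Psi$ is an isomorphism.

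The key step is to upgrade the statement of Proposition \ref{RNCnormalBundle}, which concerns only global sections, to an isomorphism of sheaves. The proposition says precisely that $H^0(\bar\Psi)$ is an isomorphism: injective because $\alpha$ is determined by the $\alpha(q_{i,i+1})$, and surjective because the $b_{i,i+1}$ may be prescribed arbitrarily, with image $\bigoplus_i s^{n-i-1}t^{i-1}H^0(\OO_{\PP^1}(n+2)) = H^0(\mathcal{S})$. Now $\mathcal{S} \cong \OO_{\PP^1}(n+2)^{\,n-1}$ is globally generated, so the evaluation $H^0(\mathcal{S}) \otimes \OO_{\PP^1} \to \mathcal{S}$ is surjective; since every global section of $\mathcal{S}$ lifts through $\bar\Psi$, this evaluation factors through $\bar\Psi$, forcing $\bar\Psi$ to be surjective as a map of sheaves. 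Finally $\bar\Psi$ is a surjection of bundles of the same rank $n-1$, so its kernel is a torsion subsheaf of the locally free $N_{R_n/\PP^n}$, hence zero; thus $\bar\Psi$ is an isomorphism and $N_{R_n/\PP^n} \cong \mathcal{S}$.

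The main obstacle is exactly this last upgrade: an isomorphism on $H^0$ need not be an isomorphism of bundles, and Proposition \ref{RNCnormalBundle} controls only $H^0$. I expect the global generation of $\mathcal{S}$ (equivalently $n+2 \geq 0$) to be the decisive input closing that gap. A degree check, $\deg N_{R_n/\PP^n} = (n+1)n - 2 = (n-1)(n+2) = \deg \mathcal{S}$, serves as a consistency test and, combined with surjectivity, gives an alternative route via the length of the cokernel. As a final sanity check, this recovers the $e = n$ case of the splitting $N_{R_e/\PP^n} \cong \OO_{\PP^1}(e+2)^{e-1} \oplus \OO_{\PP^1}(e)^{n-e}$ quoted earlier in the section.
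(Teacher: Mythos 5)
Your argument is correct and coincides with the paper's intended one: the paper states this corollary with no proof, as an immediate consequence of Proposition \ref{RNCnormalBundle}, and your sheafification of $\alpha \mapsto (\alpha(q_{i,i+1}))_i$ --- the divisibility holding for local sections because the syzygies are relations of sheaves, surjectivity of $\bar\Psi$ from global generation of $\mathcal{S}$ combined with surjectivity on $H^0$, and vanishing of the torsion kernel --- is precisely the natural way to make that deduction rigorous. One correction: the exponent $n-2$ in the paper's statement is a typo, since $N_{R_n/\PP^n}$ has rank $n-1$ and degree $n(n+1)-2=(n-1)(n+2)$, so the correct claim is $\OO_{\PP^1}(n+2)^{n-1}$ --- exactly what your proof delivers and what the paper itself uses later (e.g.\ in Corollary \ref{cor-balanced2} and in deducing $N_{R_e/\PP^n}=\OO_{\PP^1}(e+2)^{e-1}\oplus\OO_{\PP^1}(e)^{n-e}$) --- so you should fix your opening sentence, which repeats the erroneous $n-2$ even though your subsheaf $\mathcal{S}$ correctly has $n-1$ summands.
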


\begin{corollary}
For an integer $e \leq n$, the normal bundle $N_{R_e/\PP^n}$ is $\OO_{\PP^1}(e+2)^{e-1} \oplus \OO_{\PP^1}(e)^{n-e}$.
\end{corollary}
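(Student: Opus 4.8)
The plan is to realize $R_e$ inside its linear span and bootstrap from the case of a full rational normal curve, which was already handled in Corollary \ref{cor-RnNormalBundle}. Since $e \leq n$, the curve $R_e$ is nondegenerate in a unique linear subspace $\PP^e \subset \PP^n$, and inside this $\PP^e$ it is a rational normal curve of full degree $e$. Applying Corollary \ref{cor-RnNormalBundle} with $n$ replaced by $e$ gives $N_{R_e/\PP^e} \cong \OO_{\PP^1}(e+2)^{e-1}$; this computation is valid in every characteristic because it rests on Proposition \ref{RNCnormalBundle}, which makes no assumption on $p$.

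First I would write down the exact sequence of normal bundles associated to the flag of smooth varieties $R_e \subset \PP^e \subset \PP^n$,
$$0 \to N_{R_e/\PP^e} \to N_{R_e/\PP^n} \to N_{\PP^e/\PP^n}|_{R_e} \to 0,$$
and then compute the quotient term. Since $\PP^e$ is a linear subspace, its normal bundle in $\PP^n$ is $N_{\PP^e/\PP^n} \cong \OO_{\PP^e}(1)^{n-e}$; restricting to $R_e$, which has degree $e$, yields $N_{\PP^e/\PP^n}|_{R_e} \cong \OO_{\PP^1}(e)^{n-e}$. Substituting the two computed terms, the sequence becomes
$$0 \to \OO_{\PP^1}(e+2)^{e-1} \to N_{R_e/\PP^n} \to \OO_{\PP^1}(e)^{n-e} \to 0.$$

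Finally, I would observe that this extension of vector bundles on $\PP^1$ splits, since $\Ext^1(\OO_{\PP^1}(e)^{n-e}, \OO_{\PP^1}(e+2)^{e-1}) \cong H^1(\PP^1, \OO_{\PP^1}(2))^{(n-e)(e-1)} = 0$. The desired isomorphism $N_{R_e/\PP^n} \cong \OO_{\PP^1}(e+2)^{e-1} \oplus \OO_{\PP^1}(e)^{n-e}$ follows at once. There is no serious obstacle in this argument; the only points needing care are confirming that the base case $N_{R_e/\PP^e} \cong \OO_{\PP^1}(e+2)^{e-1}$ is genuinely available in all characteristics (it is, via Proposition \ref{RNCnormalBundle}, which sidesteps the Euler-sequence computation that fails when $p \mid e$) and verifying that the linear-span picture identifies $R_e$ with the full rational normal curve in $\PP^e$, so that Corollary \ref{cor-RnNormalBundle} applies verbatim.
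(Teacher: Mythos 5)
Your proposal is correct and follows essentially the same route as the paper: the paper also passes to the linear span $\Lambda = \PP^e$ of $R_e$, uses the normal bundle sequence $0 \to N_{R_e/\Lambda} \to N_{R_e/\PP^n} \to N_{\Lambda/\PP^n}|_{R_e} \to 0$ together with Corollary \ref{cor-RnNormalBundle}, and concludes by the vanishing of $\Ext^1(N_{\Lambda/\PP^n}|_{R_e}, N_{R_e/\Lambda})$. Your write-up merely makes explicit the details the paper leaves implicit, namely the restriction computation $N_{\PP^e/\PP^n}|_{R_e} \cong \OO_{\PP^1}(e)^{n-e}$ and the identification of the $\Ext^1$ group with copies of $H^1(\PP^1, \OO_{\PP^1}(2)) = 0$.
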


\begin{proof}
Let $\Lambda$ be the $e$-plane spanned by $R_e$. We have the short exact sequence 
\[ 0 \to N_{R_e/\Lambda} \to N_{R_e / \PP^n} \to N_{\Lambda/\PP^n}|_{R_e} \to 0 . \]
We have $N_{\Lambda/\PP^n} = \OO_{\PP^1}(e)^{n-e}$ and by Corollary \ref{cor-RnNormalBundle}, $N_{R_e/\Lambda} = \OO_{\PP^1}(e+2)^{e-1}$. Since $$\Ext^1(N_{\Lambda/\PP^n}|_{R_e}, N_{R_e/\Lambda}) = 0,$$ we see that $N_{R_e/\PP^n}$ has the desired form.
\end{proof}

\subsection*{Families of vector bundles} Let $\mathcal{V}$ be a family of vector bundles on $\PP^1$. Suppose that the family contains a vector bundle $V_0$ with a given splitting type. Then the {\em expected codimension} of the locus of vector bundles in $\mathcal{V}$ with that  splitting type is $h^1(\End(V_0))$ (see \cite[Lemma 2.4]{Coskun} and \cite{CoskunRiedl}). 

\begin{lemma}
\label{expCodimKernels}
Let $A$ be a vector bundle on $\PP^1$ and consider the family $\mathcal{F}$ of vector bundles given by surjective kernels of homomorphisms in $\Hom(A, \OO_{\PP^1}(k))$.  If $H^1(V^* \otimes A)=0$, then the locus of bundles in $\mathcal{F}$ with the same splitting type as $V$ has expected codimension $h^1(\End(V))$. 
\end{lemma}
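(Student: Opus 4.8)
The plan is to realize the family $\FF$ concretely and compute the codimension of the splitting-type stratum by a direct dimension count. The natural parameter space for $\FF$ is the open subset $U \subseteq \Hom(A,\OO_{\PP^1}(k))$ of surjective homomorphisms, and each $\phi \in U$ produces its kernel bundle sitting in a short exact sequence $0 \to V_\phi \to A \to \OO_{\PP^1}(k) \to 0$. Fixing a bundle $V$ occurring as some $V_\phi$, I must show that the locus $\Sigma \subseteq U$ where $V_\phi$ has the splitting type of $V$ has codimension exactly $h^1(\End V)$.

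First I would compute $\dim U$. Applying $\Hom(-,\OO_{\PP^1}(k))$ to the defining sequence and using $\Ext^1(\OO_{\PP^1}(k),\OO_{\PP^1}(k)) = H^1(\OO_{\PP^1}) = 0$, the restriction map $\Hom(A,\OO_{\PP^1}(k)) \to \Hom(V,\OO_{\PP^1}(k))$ is surjective with one-dimensional kernel (the scalars), so $\dim U = 1 + \dim\Hom(V,\OO_{\PP^1}(k))$. Next I would compute $\dim\Sigma$ by parametrizing it through inclusions: a surjection with kernel isomorphic to $V$ is the same data as a subbundle of $A$ isomorphic to $V$ with locally free quotient, together with an identification of that quotient with $\OO_{\PP^1}(k)$. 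Such subbundle inclusions form an open subset of $\Hom(V,A)$, of dimension $\dim\Hom(V,A)$ when nonempty. Quotienting by $\operatorname{Aut}(V)$ (re-choosing the isomorphism of the kernel) and by the scalars $\operatorname{Aut}(\OO_{\PP^1}(k)) = K^{\ast}$ (re-scaling $\phi$), I expect $\dim\Sigma = \dim\Hom(V,A) - \dim\Hom(V,V) + 1$.

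Subtracting gives $\codim_U \Sigma = \dim\Hom(V,\OO_{\PP^1}(k)) - \dim\Hom(V,A) + \dim\Hom(V,V)$. To identify this with $h^1(\End V) = \dim\Ext^1(V,V)$, I would apply $\Hom(V,-)$ to the defining sequence, obtaining a long exact sequence involving $\Hom(V,V)$, $\Hom(V,A)$, $\Hom(V,\OO_{\PP^1}(k))$, $\Ext^1(V,V)$ and $\Ext^1(V,A)$. The hypothesis $H^1(V^{*} \otimes A) = \Ext^1(V,A) = 0$ is exactly what truncates this to the four-term exact sequence $0 \to \Hom(V,V) \to \Hom(V,A) \to \Hom(V,\OO_{\PP^1}(k)) \to \Ext^1(V,V) \to 0$, whose alternating sum of dimensions collapses precisely to the desired identity. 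Conceptually, the hypothesis makes the connecting map $\delta\colon \Hom(V,\OO_{\PP^1}(k)) \to \Ext^1(V,V)$ surjective, i.e. the Kodaira–Spencer map of $\FF$ surjects onto the deformation space $\Ext^1(V,V)$ of $V$; thus the classifying map to the versal deformation is submersive and the stratum attains its expected codimension rather than a smaller one.

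The main obstacle is the bookkeeping in the second step: correctly accounting for the two automorphism groups $\operatorname{Aut}(V)$ and $K^{\ast}$ in the fibers of the parametrization, and verifying that the relevant loci of subbundle inclusions are open and nonempty so that the naive dimension counts are valid. Everything else is a formal consequence of the two long exact sequences once the hypothesis is invoked to annihilate $\Ext^1(V,A)$.
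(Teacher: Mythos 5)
Your proposal is correct, but it reaches the conclusion by a genuinely different route from the paper. The paper's proof is deformation-theoretic: it factors the Kodaira--Spencer map $\kappa\colon \Hom(A,\OO_{\PP^1}(k)) \to \Ext^1(V,V)$ as $\nu \circ \mu$, where $\mu$ and $\nu$ come from applying $\Hom(-,\OO_{\PP^1}(k))$ and $\Hom(V,-)$ to $0 \to V \to A \to \OO_{\PP^1}(k) \to 0$, shows $\mu$ is surjective because $\Ext^1(\OO_{\PP^1}(k),\OO_{\PP^1}(k))=0$ and $\nu$ is surjective because $\Ext^1(V,A)=H^1(V^*\otimes A)=0$, and then cites the standard fact (from [C08, Lemma 2.4] and [CR16]) that surjectivity of $\kappa$ forces the codimension of the splitting-type stratum in the family to equal its codimension $h^1(\End(V))$ in the versal deformation space. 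You instead avoid versal deformation theory entirely and compute both sides of the codimension by hand: $\dim U = 1 + \hom(V,\OO_{\PP^1}(k))$ from the first sequence, and $\dim \Sigma = \hom(V,A) - \hom(V,V) + 1$ by realizing $\Sigma$ as the image of a $K^*$-bundle over the open locus of subbundle inclusions in $\Hom(V,A)$, with $\operatorname{Aut}(V)$ acting freely on fibers (freeness is immediate: $\iota \circ \alpha = \iota$ with $\iota$ fiberwise injective forces $\alpha = \mathrm{id}$, so the orbit bookkeeping you worried about does go through, and nonemptiness holds because $V$ occurs as some $V_\phi$). The identity $\hom(V,\OO_{\PP^1}(k)) - \hom(V,A) + \hom(V,V) = \operatorname{ext}^1(V,V)$ then falls out of the four-term exact sequence that your hypothesis $\Ext^1(V,A)=0$ truncates. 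Note that the two long exact sequences you use are literally the paper's $\mu$ and $\nu$, so the homological input is identical; what differs is the packaging. The paper's argument is shorter and its Kodaira--Spencer surjectivity criterion is reusable in other families, while yours is self-contained, makes the expected-codimension count fully explicit, and as a byproduct shows each stratum containing a kernel $V$ has \emph{exactly} dimension $\hom(V,A)+1-\hom(V,V)$ rather than deducing the codimension abstractly.
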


\begin{proof}
This lemma is standard (see \cite[Proposition 3.6]{CoskunHuizengaWBN}). Briefly, the Kodaira-Spencer map $\kappa: \Hom(A, \OO_{\PP^1}(k)) \rightarrow \Ext^1(V,V)$ naturally factors into $$\Hom (A, \OO_{\PP^1}(k)) \stackrel{\mu}{\rightarrow} \Hom(V, \OO_{\PP^1}(k)) \stackrel{\nu}{\rightarrow} \Ext^1(V,V),$$ where $\mu$ and $\nu$ are the natural morphisms that occur when applying $\Hom(-, \OO_{\PP^1}(k))$ and $\Hom(V, -)$ to the sequence $0 \rightarrow V \rightarrow A \rightarrow \OO_{\PP^1}(k) \rightarrow 0,$ respectively. Both $\mu$ and $\nu$ are surjective, since $\Ext^1(\OO_{\PP^1}(k), \OO_{\PP^1}(k))=0$ and $\Ext^1(V, A) = H^1(V^*\otimes A) =0$. Hence, the codimension of the locus of bundles with the given splitting type in the family is the same as the codimension in the versal deformation space (see \cite[Lemma 2.4]{Coskun} and \cite[\S 2]{CoskunRiedl}).
\end{proof}

\section{Normal bundles of curves on hypersurfaces of degree at least $3$}\label{sec-3}
In this section, we study the splitting type of normal bundles of rational curves on hypersurfaces of degree $d\geq 3$. The analysis of rational curves on complete intersections of hypersurfaces of degree $d=2$ requires different techniques and will be carried out in the next section. 

Let $n \geq 3$ be an integer. Let $C$ be a smooth rational curve of degree $e$ and let $X \subset \PP^n$ be a degree $d$ hypersurface containing $C$. We first obtain an explicit description of the normal bundle $N_{C/X}$. There is a standard sequence of ideal sheaves
\begin{equation}\label{eq-standardseq}
 0 \longrightarrow \II_{X/\PP^n} \stackrel{\alpha}{\longrightarrow} \II_{C/\PP^n} \longrightarrow \II_{C/X} \longrightarrow 0, 
\end{equation}
where the  map $\alpha$ is given by expressing a function vanishing on $X$ in terms of the generators of the ideal sheaf of $C$. Given a choice of degree $d$ polynomial $f$ with $X = V(f)$, we get an identification of $\II_{X/\PP^n}$ with $\OO_{\PP^n}(-d)$. Using this identification and applying $\Hom( -, \OO_{C})$ to the exact sequence (\ref{eq-standardseq}), we obtain the sequence
\begin{equation}\label{eq-2}
0 \longrightarrow N_{C/X} \longrightarrow N_{C/\PP^n} \stackrel{\psi}{\longrightarrow} \OO_{\PP^1}(ed).
\end{equation}
The map $\psi$ is the dual of the map $\alpha$ that expresses functions vanishing on $X$ in terms of the generators of the ideal of $C$. Thus, we have a map $\phi$ that sends functions $f \in H^0(\II_C(d))$ to elements of $\Hom(N_{C/\PP^n}, \OO_{\PP^1}(ed))$.

This correspondence can be identified via the sequence
\[ 0 \longrightarrow \II_{C/\PP^n}^2\otimes \OO_{\PP^n}(d) \longrightarrow \II_{C/\PP^n} \otimes \OO_{\PP^n}(d) \longrightarrow N_{C/\PP^n}^*\otimes \OO_{\PP^n}(d) \longrightarrow 0 \]
with the map 
\[ 0 \longrightarrow H^0(\II_{C/\PP^n}^2(d)) \longrightarrow H^0(\II_{C/\PP^n}(d)) \stackrel{\phi}{\longrightarrow} H^0(N_{C / \PP^n}^*(d)) .\]



The main result of this section is the following theorem.

\begin{theorem}
\label{surjectivityPhi}
Let $C \subset \PP^n$ be a rational curve of degree $e$. The map $\phi$ is surjective in the following cases:
\begin{enumerate}
\item The curve $C = R_e$ is a rational normal curve of degree $e \leq n$ and $d \geq 3$;
\item The curve $C$ is a general rational curve of degree $e= n+1$ and $d, n \geq 5$.
\end{enumerate}
\end{theorem}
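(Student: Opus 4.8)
The plan is to compute the image of $\phi$ as directly as possible. Since $\phi$ is linear and every $f \in H^0(\II_{C/\PP^n}(d))$ is a sum of products $c\,q$ of a degree $d-2$ form $c$ with a quadratic generator $q$ of the ideal of $C$ (together, when $e < n$, with products $x_k g$ coming from the linear forms cutting out the span), it suffices to understand $\phi(c\,q)$ for a single generator. The key observation is that $\phi(c\,q)$ depends on $c$ only through its restriction $c|_C$, and, because a rational normal curve is projectively normal, the restriction $H^0(\OO_{\PP^n}(d-2)) \to H^0(\OO_C(d-2)) = H^0(\OO_{\PP^1}((d-2)e))$ is surjective once $d \geq 3$. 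Thus, inside the image of $\phi$, I may treat the coefficients $c|_C$ as \emph{arbitrary} forms of degree $(d-2)e$ on $\PP^1$, and the surjectivity of $\phi$ reduces to a purely linear-algebraic question on $\PP^1$.

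First I would treat the maximal case $C = R_m \subset \PP^m$; the general case $e \leq n$ then follows by splitting $N^*_{R_e/\PP^n}(d) = N^*_{R_e/\Lambda}(d) \oplus \OO_{\PP^1}(ed-e)^{n-e}$ along the span $\Lambda \cong \PP^e$, where the second summand is hit by the products $x_k g$ (again by projective normality) and the first is exactly the maximal case inside $\Lambda$. For $C = R_m$, Proposition \ref{RNCnormalBundle} identifies $N_{R_m/\PP^m}$ with $\bigoplus_{i=1}^{m-1}\OO_{\PP^1}(m+2)$ via $\alpha \mapsto (b_{i,i+1})$, so a homomorphism $N_{R_m/\PP^m} \to \OO_{\PP^1}(md)$ is an $(m-1)$-tuple $(P_1,\dots,P_{m-1})$ of forms of degree $md-m-2$. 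The formula for $\alpha(q_{i,k})$ in that proposition shows that $\phi(c\,q_{i,k})$ is the tuple whose $\ell$-th entry is $c|_C \cdot s^{\,m-k-i+\ell}t^{\,k+i-\ell-2}$ for $i \leq \ell < k$ and $0$ otherwise.

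The heart of the argument is to show that these tuples span the whole target. The adjacent generators $q_{i,i+1}$ contribute only to the $i$-th factor, where they realize every multiple of $s^{\,m-i-1}t^{\,i-1}$; these already account for all monomials with sufficiently large $s$- and $t$-degree in a single slot. The remaining monomials, those with too few factors of $s$ or of $t$, must be produced by the non-adjacent generators $q_{i,k}$ with $k > i+1$, which lower one of the exponents but at the cost of spurious contributions to the intermediate factors $i \leq \ell < k$. I would clear these spurious terms by a triangular induction on the factor index (equivalently, on the exponent being corrected), subtracting off tuples already known to lie in the image. This is exactly where $d \geq 3$ is indispensable: it makes the multipliers $c|_C$ nonconstant, supplying the degrees of freedom needed for the cancellation. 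For $d = 2$ the multipliers are scalars, the cancellation is impossible, and the conclusion genuinely fails, which is why quadrics must be treated separately. I expect this combinatorial elimination to be the main obstacle in part (1).

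For part (2) the curve is not a rational normal curve, so the explicit presentation of Proposition \ref{RNCnormalBundle} is unavailable. Since surjectivity of the bundle map underlying $\phi$ is an open condition in a flat family (the corank is upper semicontinuous), surjectivity on a special member forces it on the general member, and it suffices to exhibit one degree $n+1$ curve for which $\phi$ is surjective. I would degenerate the general degree $n+1$ rational curve to a connected nodal curve whose components are rational normal curves of lower degree, for instance $R_n \cup L$ with $L$ a secant line, apply part (1) on the components, and assemble the results using the normal bundle sequence of the nodal curve. The hypotheses $d, n \geq 5$ should supply the cohomological vanishings ($H^1$ of the relevant twisted ideal and normal sheaves) that make this gluing work. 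The principal difficulty here is controlling $\phi$ across the node: one must check that surjectivity on the components genuinely assembles to surjectivity on the reducible curve before invoking the semicontinuity argument.
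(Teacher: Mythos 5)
Your route is genuinely different from the paper's, and the difference matters. The paper never computes the image of $\phi$ directly: it identifies $\ker \phi = H^0(\II_{C/\PP^n}^2(d))$, bounds this kernel from above by degenerating $C$ to a chain of lines and counting the linear conditions that double vanishing imposes (Lemmas \ref{degenLem} and \ref{lem-lastcase}, which use only upper semicontinuity of $h^0(\II^2(d))$ under specialization), and then closes by an exact dimension count: since the splitting type of $N_{C/\PP^n}$ is known and $\Hom(N_{C/\PP^n},\OO_{\PP^1}(ed))$ is globally generated in the stated ranges, its dimension equals the resulting lower bound on $\dim \operatorname{im}\phi$ (Propositions \ref{prop-surjective} and \ref{prop-lastcase}). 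Your plan for part (1) — spanning the target explicitly via the presentation of Proposition \ref{RNCnormalBundle}, with projective normality turning the coefficients into arbitrary forms $c|_C$ on $\PP^1$ — is set up correctly ($\phi(c\,q)$ indeed depends only on $c|_C$, and the reduction from $e\leq n$ to the maximal case via the split sequence along the span $\Lambda$ is sound), and the deficit-decreasing elimination you sketch can in fact be pushed through: the spurious term that $q_{i,\ell+1}$ deposits in slot $j$ has $t$-deficit $2j-\ell-i$, strictly smaller than the deficit $\ell-i$ you are correcting, so induction on the deficit terminates, with all coefficients $\pm1$ so no characteristic issues arise. But you have left exactly this step — the entire content of part (1) — as an expectation rather than a proof; as written, part (1) is a plausible program, not an argument.

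The genuine gap is in part (2). First, the semicontinuity you invoke is not valid in the form stated: at the nodal curve $C_0 = R_n \cup L$ the sheaf $\II_{C_0}/\II_{C_0}^2$ is not locally free at the node, so the source and target of $\phi$ are $H^0$'s of sheaves whose dimensions can jump at the special fiber, and ``corank is upper semicontinuous for a map of vector bundles'' does not apply. To conclude surjectivity at the general fiber from surjectivity at $C_0$ you would need $h^0(\II_{C_t}(d))$ constant (fine, by $d$-regularity) \emph{and} $h^0(N^*_{C_0}(d)) \geq h^0(N^*_{C_t}(d))$, i.e., flatness of the family of twisted conormal sheaves through the node — which you do not address. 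Second, the ``assembly across the node'' is not a verification to be done at the end; it is the whole problem. There is no exact sequence expressing $H^0(N^*_{C_0}(d))$ in terms of the components' data for which surjectivity on each component formally yields surjectivity on $C_0$, and part (1) applied to the line component carries essentially no information. This is precisely why the paper never proves (or needs) surjectivity of $\phi$ at any degenerate curve: the degeneration is used only to bound the kernel $h^0(\II_{C_0}^2(d))$, for which plain upper semicontinuity of $h^0$ suffices, and the hypotheses $d,n\geq 5$ enter through the independence of the double-vanishing conditions in Lemma \ref{lem-lastcase} and through the inequality $3n+1 \leq (n+1)d$ guaranteeing global generation of the $\Hom$ bundle — not through cohomological gluing vanishings as you guessed. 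Your part (2) can be repaired by abandoning surjectivity-at-the-special-fiber altogether and instead bounding $h^0(\II^2(d))$ at the degenerate chain and counting dimensions on the smooth general curve; but that repair is exactly the paper's proof, so as it stands your proposal does not contain an independent argument for part (2).
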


Since the kernel of $\phi$ is $H^0(\II_{C/\PP^n}^2(d))$, we need to estimate the dimension of $H^0(\II_{C/\PP^n}^2(d))$. The following two lemmas provide the desired estimates.

\begin{lemma}
\label{degenLem} Let $R_e$ be a rational normal curve in $\PP^n$ of degree $e$ and let  $d \geq 3$.  Then $$h^0(\II_{R_e}^2(d)) \leq \binom{n+d}{d} - e(nd+1)+(e-1)(n+2).$$
\end{lemma}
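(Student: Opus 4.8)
The plan is to bound $h^0(\II_{R_e}^2(d))$ by estimating the dimension of the image of the natural evaluation/restriction map rather than computing the square of the ideal directly. The quantity $\binom{n+d}{d}$ is precisely $h^0(\OO_{\PP^n}(d))$, the dimension of the space of all degree $d$ forms on $\PP^n$, so the stated bound is equivalent to showing that the forms in $\II_{R_e}^2(d)$ satisfy at least $e(nd+1)-(e-1)(n+2)$ independent linear conditions. The geometric meaning of $\II_{R_e}^2(d)$ is the space of degree $d$ forms vanishing to order $\geq 2$ along $R_e$; equivalently, forms $f$ such that both $f|_{R_e}=0$ and the induced section of the conormal bundle, i.e. the image of $f$ under $\phi$ composed with restriction, also vanishes. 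So I would count the conditions imposed by (a) vanishing on $R_e$ and (b) the additional vanishing of the first-order data encoded in $N^*_{R_e/\PP^n}(d)$.

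The cleanest route exploits the exact sequence already set up in the excerpt,
\[ 0 \longrightarrow H^0(\II_{R_e}^2(d)) \longrightarrow H^0(\II_{R_e}(d)) \stackrel{\phi}{\longrightarrow} H^0(N^*_{R_e/\PP^n}(d)), \]
which gives immediately
\[ h^0(\II_{R_e}^2(d)) = h^0(\II_{R_e}(d)) - \operatorname{rk}(\phi). \]
First I would compute $h^0(\II_{R_e}(d))$ exactly. Since $R_e$ is a rational normal curve (projectively normal, arithmetically Cohen--Macaulay), the restriction $H^0(\OO_{\PP^n}(d)) \to H^0(\OO_{R_e}(d))=H^0(\OO_{\PP^1}(ed))$ is surjective, so $h^0(\II_{R_e}(d)) = \binom{n+d}{d} - (ed+1)$. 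To obtain the inequality in the stated direction, I then need a \emph{lower} bound on $\operatorname{rk}(\phi)$, because $h^0(\II_{R_e}^2(d))$ decreases as $\operatorname{rk}(\phi)$ grows; I would establish $\operatorname{rk}(\phi) \geq e(nd+1)-(e-1)(n+2)-(ed+1)$, which rearranges to exactly the claimed bound after combining with the value of $h^0(\II_{R_e}(d))$.

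To produce the lower bound on $\operatorname{rk}(\phi)$, I would use the explicit description from Proposition \ref{RNCnormalBundle} together with Corollary \ref{cor-RnNormalBundle} (and its degree $e\leq n$ analogue): $N_{R_e/\PP^n} = \OO_{\PP^1}(e+2)^{e-1} \oplus \OO_{\PP^1}(e)^{n-e}$, so $N^*_{R_e/\PP^n}(d) = \OO_{\PP^1}(ed-e-2)^{e-1} \oplus \OO_{\PP^1}(ed-e)^{n-e}$, whose $h^0$ equals $(e-1)(ed-e-1) + (n-e)(ed-e+1)$ once the twists are nonnegative (which holds since $d\geq 3$). The strategy is to exhibit enough explicit degree $d$ forms — products $q_{ij}\cdot g$ of the quadratic minors $q_{ij}$ generating $\II_{R_e}$ with degree $d-2$ forms $g$ — and track their images under $\phi$ using the formula $\alpha(q_{i,i+1})=s^{n-i-1}t^{i-1}b_{i,i+1}$ established in Proposition \ref{RNCnormalBundle}, which pins down exactly the divisibility constraints on the image. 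I expect the main obstacle to be the bookkeeping that shows these images span a subspace of $H^0(N^*_{R_e/\PP^n}(d))$ of the required dimension: one must verify that the monomial multiples $s^{n-i-1}t^{i-1}b_{i,i+1}$ fill up each summand of the conormal bundle to the claimed extent, rather than collapsing under the relations among the $q_{ij}$. Since the lemma only asserts an inequality (an upper bound on $h^0(\II_{R_e}^2(d))$, i.e. a lower bound on the rank), it suffices to exhibit one sufficiently large explicitly independent family, which sidesteps the need to compute $\operatorname{rk}(\phi)$ on the nose.
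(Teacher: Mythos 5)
Your reduction via the exact sequence is sound, and your value $h^0(\II_{R_e}(d)) = \binom{n+d}{d} - (ed+1)$ is correct, but the final claim of slack is illusory, and this is where the proposal breaks down. The rank bound you need is $\operatorname{rk}(\phi) \geq e(nd+1)-(e-1)(n+2)-(ed+1) = end - e(n+d+1) + n + 1$, and this number is \emph{exactly} $\dim \Hom(N_{R_e/\PP^n}, \OO_{\PP^1}(ed)) = (e-1)(ed-e-1)+(n-e)(ed-e+1)$ (the target is globally generated since $d \geq 3$). So there is zero room: your plan forces you to prove that $\phi$ is \emph{surjective}, which is precisely Proposition \ref{prop-surjective} — a statement the paper deduces \emph{from} this lemma by a dimension count (and it then observes the lemma's inequality is in fact an equality). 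Your proposal thus inverts the paper's logical order, and the entire content of the lemma is relocated into the step you defer as ``bookkeeping'': verifying that the images of the multiples $q_{ij}\cdot g$ span all of $H^0(N^*_{R_e/\PP^n}(d))$. No ``sufficiently large independent family'' short of a full spanning set can suffice, and that spanning verification — tracking how the monomial factors $s^{n-i-1}t^{i-1}$ interact with the relations among the $q_{ij}$ — is the whole difficulty; it is not carried out.

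There is a second concrete gap for $e < n$: the ideal of $R_e$ in $\PP^n$ is not generated by the quadric minors alone — it contains the $n-e$ linear forms cutting out the span of $R_e$ — and Proposition \ref{RNCnormalBundle} is stated only for the full rational normal curve $R_n \subset \PP^n$. To hit the summands of $\Hom(N_{R_e/\PP^n},\OO_{\PP^1}(ed))$ coming from $\OO_{\PP^1}(e)^{n-e}$ (the normal directions of the span), you would need both the linear generators and an extension of the divisibility formula to $R_e \subset \PP^n$, neither of which your sketch addresses. The paper's actual proof sidesteps $\phi$ entirely: it degenerates $R_e$ to a chain of $e$ coordinate lines, counts by hand the monomial coefficients forced to vanish by double vanishing along each line ($nd+1$ conditions per line, with $n+2$ overlaps between consecutive lines), obtains $e(nd+1)-(e-1)(n+2)$ independent conditions for the degenerate curve, and concludes by semicontinuity — an elementary count that yields the upper bound without any rank estimate.
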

\begin{proof}
Degenerate $R_e$ to the chain of $e$ lines $C_0 = \ell_1 \cup \dots \cup \ell_e$, where $\ell_i$ is given by the vanishing of all $x_j$ with $j \neq i-1, i$. In order for a function $f$ to vanish to order $2$ along $C_0$, it needs to vanish to order two along each line $\ell_i$. A polynomial vanishes to order $2$ along $\ell_i$ if and only if the coefficients of the monomials $$x_{i-1}^jx_i^{d-j} \ \mbox{for} \ 0 \leq j \leq d \quad \mbox{and} \quad x_m x_{i-1}^jx_i^{d-1-j} \ \mbox{for} \ 0 \leq j \leq d-1, m \neq i-1, i$$ vanish.
Vanishing to order two along $\ell_i$ imposes $d+1+(n-1)d = nd+1$ linear conditions on polynomials of degree $d$. Let $i_1 < i_2$. The conditions imposed by  $\ell_{i_1}$ and $\ell_{i_2}$ overlap if and only if  $i_2= i_1 + 1$. 
The $n+2$ terms $$x_i^d, \quad x_{i-1} x_i^{d-2} x_{i+1} \quad \mbox{and}  \quad x_j x_i^{d-1}, \ j \neq i$$ are common to both  $\ell_i$ and $\ell_{i+1}$. In total, we conclude that vanishing to order  two along $C_0$ imposes $e(nd+1)-(e-1)(n+2)$ linear conditions on a polynomial of degree $d$. The lemma follows by semicontinuity.
\end{proof}

The following lemma provides similar estimates for rational curves of degree $n+1$.

\begin{lemma}\label{lem-lastcase}
Let $d, n \geq 5$. Let $C$ be a general rational curve of degree $n+1$. Then $$h^0(\II_C^2(d)) \leq \binom{2n}{n} - (d-1)(n^2+n) -1.$$
\end{lemma}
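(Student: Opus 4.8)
The plan is to bound $h^0(\II_C^2(d))$ by degeneration and semicontinuity, in the spirit of Lemma \ref{degenLem}. Since $C$ is a general rational curve of degree $n+1$, I would specialize it to the nodal, arithmetic-genus-zero curve $C_0 = R_n \cup L$, where $R_n$ is a rational normal curve of degree $n$ spanning $\PP^n$ and $L$ is a general line meeting $R_n$ at a single point $p$; this is a flat limit of smooth rational curves of degree $n+1$. It then suffices to prove $h^0(\II_{C_0}^2(d)) \leq \binom{n+d}{d} - (d-1)(n^2+n) - 1$. One technical point is that $\II_{C_t}^2$ need not vary flatly; this is handled by passing to the flat limit $W_0$ of the ribbons $C_t^{(1)} = V(\II_{C_t}^2)$, which contains $V(\II_{C_0}^2)$, so that $h^0(\II_C^2(d)) \leq h^0(\II_{W_0}(d)) \leq h^0(\II_{C_0}^2(d))$ by upper semicontinuity.

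To compute $h^0(\II_{C_0}^2(d))$, I would first record the local identity $\II_{C_0}^2 = \II_{R_n}^2 \cap \II_L^2$: away from $p$ this is immediate, and at the node, choosing coordinates so that the two branches are the $x_1$- and $x_2$-axes, one checks directly that $(x_3, \ldots, x_n, x_1 x_2)^2 = (x_2, \ldots, x_n)^2 \cap (x_1, x_3, \ldots, x_n)^2$. This yields the Mayer--Vietoris sequence
\[ 0 \to \II_{C_0}^2 \to \II_{R_n}^2 \oplus \II_L^2 \to \II_Z \to 0, \]
where $\II_Z = \II_{R_n}^2 + \II_L^2$ is the ideal of the scheme-theoretic intersection of the two first-order neighborhoods. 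The same local computation shows $Z$ is a fat point of length $n+2$ supported at $p$: the monomials outside $(x_2, \ldots, x_n)^2 + (x_1, x_3, \ldots, x_n)^2$ are exactly $1, x_1, x_2, x_1 x_2$ and $x_3, \ldots, x_n$. Reassuringly, $n+2$ is precisely the overlap Lemma \ref{degenLem} records for two lines meeting at a point.

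Twisting by $\OO(d)$ and taking cohomology, and combining the elementary equality $h^0(\II_L^2(d)) = \binom{n+d}{d} - (nd+1)$ with the bound $h^0(\II_{R_n}^2(d)) \leq \binom{n+d}{d} - [\,n(nd+1) - (n-1)(n+2)\,]$ from Lemma \ref{degenLem} with $e = n$, I would obtain
\[ h^0(\II_{C_0}^2(d)) = h^0(\II_{R_n}^2(d)) + h^0(\II_L^2(d)) - h^0(\II_Z(d)) \leq \binom{n+d}{d} - (n+1)(nd+1) + n(n+2), \]
using $h^0(\II_Z(d)) = \binom{n+d}{d} - (n+2)$ (valid for $d \geq 2$, since the length-$(n+2)$ scheme $Z$ lies in the second infinitesimal neighborhood of $p$ and so imposes independent conditions). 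Since $(n+1)(nd+1) - n(n+2) = (d-1)(n^2+n) + 1$, this is exactly the asserted bound $\binom{n+d}{d} - (d-1)(n^2+n) - 1$.

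The main obstacle is the middle equality above, which requires the restriction map $\beta \colon \II_{R_n}^2(d) \oplus \II_L^2(d) \to \II_Z(d)$ to be surjective on global sections, equivalently that the order-two conditions imposed by $R_n$ and by $L$ be independent apart from the explicit length-$(n+2)$ overlap at $p$. It is precisely here that the general position of $L$ and the hypotheses $d, n \geq 5$ enter. I would approach this through the vanishing $H^1(\II_{C_0}^2(d)) = 0$ (which, granting $H^1(\II_{R_n}^2(d)) = H^1(\II_L^2(d)) = 0$, is equivalent to surjectivity of $\beta$ via the long exact sequence), or directly by exhibiting enough sections of $H^0(\II_{R_n}^2(d)) + H^0(\II_L^2(d))$ to span $H^0(\II_Z(d))$, or by a Castelnuovo--Mumford regularity estimate showing the ribbon $C_0^{(1)}$ is $d$-regular. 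The flatness point in the first paragraph is a second, milder, technicality.
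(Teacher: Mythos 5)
Your degeneration to $R_n \cup L$ and the Mayer--Vietoris bookkeeping are internally consistent: the local identity $\II_{C_0}^2 = \II_{R_n}^2 \cap \II_L^2$ at a transverse node is correct (all three ideals are monomial in suitable formal coordinates, and the monomial check you indicate goes through), the overlap scheme $Z$ does have length $n+2$ and imposes independent conditions for $d \geq 2$ since $\II_Z \supseteq \mathfrak{m}_p^3$, and the arithmetic $(n+1)(nd+1) - n(n+2) = (d-1)(n^2+n)+1$ matches the asserted bound. But the proof is genuinely incomplete at the step you yourself flag: the surjectivity of $\beta$ on global sections is not a technicality to be supplied later --- given the other counts it is \emph{equivalent} to the statement being proved. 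From the exact sequence one only gets $h^0(\II_{C_0}^2(d)) = h^0(\II_{R_n}^2(d)) + h^0(\II_L^2(d)) - \operatorname{rank} H^0(\beta)$, so Mayer--Vietoris by itself yields a \emph{lower} bound on $h^0(\II_{C_0}^2(d))$, the wrong direction; the upper bound requires the full rank $h^0(\II_Z(d))$, i.e.\ surjectivity, i.e.\ $H^1(\II_{C_0}^2(d)) = 0$, i.e.\ exactly the independence-of-conditions statement that is the entire content of the lemma. None of your three proposed routes (the $H^1$ vanishing, exhibiting spanning sections, or regularity of the ribbon) is executed, and each is of comparable difficulty to the original problem --- note in particular that the conormal sheaf of the nodal curve is not locally free at $p$, so the $H^1$ computation for the ribbon is not a routine twist count. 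By contrast, the paper degenerates one step further, to a chain of $n+1$ coordinate lines, precisely so that every condition becomes the vanishing of, or an explicit linear relation among, monomial coefficients; independence is then verified by hand (relations (\ref{eq-3}) and (\ref{eq-4})), and this is where the hypotheses $d, n \geq 5$ and the characteristic-independence are actually used. Your proposal never engages with where $d, n \geq 5$ enter, which is a symptom of the missing step.

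A secondary, fixable point: your flat-limit remark has the inclusion backwards. If $\mathcal{C}$ is the flat family with reduced limit $C_0$, then the image of $\II_{\mathcal{C}}^2$ in the central fiber is $\II_{C_0}^2$, and since $\II_{\mathcal{C}}^2 \subseteq \II_{\mathcal{W}}$ one gets $\II_{C_0}^2 \subseteq \II_{W_0}$, i.e.\ $W_0 \subseteq V(\II_{C_0}^2)$ --- the flat limit of the ribbons can only be \emph{smaller} than $V(\II_{C_0}^2)$, so the chain of inequalities you wrote does not follow as stated. (In fact, in this particular degeneration the two Hilbert polynomials agree, forcing $W_0 = V(\II_{C_0}^2)$, but proving that already uses your Mayer--Vietoris count.) The clean way to get $h^0(\II_{C_t}^2(d)) \leq h^0(\II_{C_0}^2(d))$ is the standard one, which is what the paper's ``by semicontinuity'' means in both Lemma \ref{degenLem} and Lemma \ref{lem-lastcase}: since the image of $\II_{\mathcal{C}}^2$ in each fiber is $\II_{C_t}^2$, the spaces $H^0(\II_{C_t}^2(d))$ are the kernels of an algebraically varying family of linear maps $H^0(\OO_{\PP^n}(d)) \to H^0\bigl((\OO/\II_{\mathcal{C}}^2)_t(d)\bigr)$, and kernel dimension jumps up at special $t$. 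With that repair the reduction to $C_0$ is sound; the real gap remains the unproven surjectivity of $\beta$.
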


\begin{proof}
Degenerate $C$ to a chain of $n+1$ lines $C_0 = \ell_1 \cup \dots \cup \ell_n \cup \ell_{n+1}$, where $\ell_i$ is given by the vanishing of all $x_j$ with $j \neq i-1, i$ for $1 \leq i \leq n$ and $\ell_{n+1}$ is defined by $$x_1= x_3= x_4= \cdots = x_{n-1} = x_0 - x_2 =0.$$  The proof of Lemma \ref{degenLem} determines the coefficients of the monomials that vanish for any hypersurface of degree $d$ double along $\ell_1 \cup \cdots \cup \ell_n$. We need to describe the additional conditions imposed by being double along $\ell_{n+1}$.  For simplicity, let $c_{i,j}$ denote the coefficient of the monomial $x_0^i x_2^j x_n^{d-i-j}$ and let $c_{i,j}^m$ denote the coefficient of $x_m x_0^i x_2^j x_n^{d-1-i-j}$ for $m \neq 0,2,n$.

If a hypersurface of degree $n$ is double along $\ell_{n+1}$, then the following linear relations must be satisfied among the coefficients of the hypersurface: 
\begin{equation}\label{eq-3}
\sum_{i=0}^{d-1-k} c_{i, d-1-k-i}^m =0 \ \ \mbox{for} \ \ m \not= 0,2,n, 0 \leq k \leq d-1 \ \ \mbox{and}
\end{equation}
\begin{equation}\label{eq-4}
\sum_{i=0}^{d-k} c_{i,d-k-i} =0, \quad \sum_{i=0}^{d-k-1} i c_{i, d-k-i} =0 \ \ \mbox{for} \ \ 1 \leq k \leq d-1.
\end{equation}
The relations (\ref{eq-3}) are independent  from those in Lemma \ref{degenLem} as long as $k< d-2$ or $k=d-2$ and $m \neq n-1$. To see this, note that the only monomials $x_m x_b^j x_c^k$ whose coefficients vanish by Lemma \ref{degenLem} are those with $b$ and $c$ one apart from each other, while for $k < d-2$ or $k=d-2$ and $m \neq n-1$ the $i=0$ term of the sum is different from each of these. When $m=n-1$ and $k=d-2$, the relations are already implied by the relations in Lemma \ref{degenLem}. Considering the $i=0$ and $1$ terms of each sum in the set of relations (\ref{eq-4}), we see that they are independent from each other and all of the other conditions in all characteristics. Altogether we get
$$n(nd+1)-(n-1)(n+2) + (n-2)(d-2)+n-3 + 2(d-1)= (d-1)(n^2+n)+1$$ independent conditions. We conclude that $$h^0(\II_{C_0}^2(n)) = \binom{2n}{n} - (d-1)(n^2+n)-1.$$ The Lemma follows by semicontinuity.
\end{proof}

\begin{proposition}\label{prop-surjective}
If $d \geq 3$, then  $H^0(\II_{R_e}(d)) \stackrel{\phi}{\to} \Hom(N_{R_e/\PP^n}, \OO_{\PP^1}(ed))$ is surjective for a rational normal curve $R_e$ of degree $e \leq n$ in $\PP^n$.
\end{proposition}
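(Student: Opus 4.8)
The plan is to prove surjectivity of $\phi$ by a dimension count, combining the injectivity of $\phi$ on the quotient with the explicit description of the source and target. Since the kernel of $\phi$ is exactly $H^0(\II_{R_e/\PP^n}^2(d))$, surjectivity is equivalent to the numerical identity
\[
h^0(\II_{R_e/\PP^n}(d)) - h^0(\II_{R_e/\PP^n}^2(d)) = h^0(N_{R_e/\PP^n}^*(d)) = \dim \Hom(N_{R_e/\PP^n}, \OO_{\PP^1}(ed)),
\]
provided I can show the left-hand side is at least the right-hand side (the opposite inequality is automatic since $\phi$ factors through this quotient). So the strategy is: compute each of the three quantities and verify the inequality.

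First I would compute the target. By the corollary just before this proposition, $N_{R_e/\PP^n} = \OO_{\PP^1}(e+2)^{e-1} \oplus \OO_{\PP^1}(e)^{n-e}$, so $N_{R_e/\PP^n}^* (d)$ has summands $\OO_{\PP^1}(ed - e - 2)^{e-1} \oplus \OO_{\PP^1}(ed-e)^{n-e}$. Since $d \geq 3$ and $e \geq 1$, every exponent is nonnegative, and $h^0(\OO_{\PP^1}(a)) = a+1$ for $a \geq 0$, giving
\[
h^0(N_{R_e/\PP^n}^*(d)) = (e-1)(ed-e-1) + (n-e)(ed-e+1).
\]
Next I would compute $h^0(\II_{R_e/\PP^n}(d))$, the number of degree-$d$ hypersurfaces through $R_e$. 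A rational normal curve imposes independent conditions on forms of each degree, so $h^0(\II_{R_e}(d)) = \binom{n+d}{d} - h^0(\OO_{R_e}(d)) = \binom{n+d}{d} - (ed+1)$. For the second-order term I would invoke Lemma \ref{degenLem}, which gives the \emph{upper} bound $h^0(\II_{R_e}^2(d)) \leq \binom{n+d}{d} - e(nd+1) + (e-1)(n+2)$.

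The key step is then to combine these: subtracting the upper bound for $h^0(\II_{R_e}^2(d))$ from the exact value of $h^0(\II_{R_e}(d))$ yields a \emph{lower} bound for $h^0(\II_{R_e}(d)) - h^0(\II_{R_e}^2(d))$, and I expect this lower bound to equal exactly $h^0(N_{R_e/\PP^n}^*(d))$. After the $\binom{n+d}{d}$ terms cancel, the inequality to verify reduces to
\[
e(nd+1) - (e-1)(n+2) - (ed+1) \;\geq\; (e-1)(ed-e-1) + (n-e)(ed-e+1),
\]
which is a polynomial identity in $e$, $n$, $d$ that I would expand and check directly. The main obstacle, and the reason the proposition requires the preceding lemmas rather than being immediate, is that Lemma \ref{degenLem} only furnishes an \emph{inequality} for $h^0(\II_{R_e}^2(d))$ coming from the degeneration to a chain of lines; if this bound were not sharp the dimension count would fail. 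Thus the heart of the argument is that the degeneration bound is tight enough that the lower bound for the rank of $\phi$ meets the full dimension of the target, forcing $\phi$ to be surjective — and I would double-check the edge case $e = n$ (so $N_{R_n/\PP^n} = \OO_{\PP^1}(n+2)^{n-2}$, where the second summand is absent) to confirm the count holds uniformly.
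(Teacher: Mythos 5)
Your proposal is correct and follows the paper's proof essentially verbatim: a dimension count combining $h^0(\II_{R_e}(d)) = \binom{n+d}{d}-(ed+1)$, the upper bound on $h^0(\II_{R_e}^2(d))$ from Lemma \ref{degenLem}, and the global generation of $\Hom(N_{R_e/\PP^n},\OO_{\PP^1}(ed))$, with your displayed inequality in fact holding as an equality, $end - e(n+1+d)+n+1$ on both sides (which, as the paper notes, also forces the bound in Lemma \ref{degenLem} to be sharp a posteriori). The only quibble is the direction of your closing remark: sharpness of the degeneration bound is a \emph{consequence} of the count rather than a hypothesis it needs, since a strictly smaller $h^0(\II_{R_e}^2(d))$ would make the image exceed the target, which is impossible.
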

\begin{proof}
The proof is a dimension count.  Since $R_e$ is a rational normal curve, we see that $$h^0(\II_{R_e}(d)) = \binom{n+d}{d} - (ed+1).$$  Lemma \ref{degenLem} implies that $$h^0(\II_{R_e}^2(d)) \leq \binom{n+d}{d} - e(nd+1)+(e-1)(n+2).$$ Hence, the image of $\phi$ has dimension at least
\[ \binom{n+d}{d} - (ed+1) - \left( \binom{n+d}{d} - e(nd+1)+(e-1)(n+2) \right) = end - e(n+1+d) + n + 1 . \]
Now we compute the dimension of $$\Hom(N_{{R_e}/\PP^n},\OO_{\PP^1}(ed)) = \Hom(\OO(e+2)^{e-1}\oplus \OO(e)^{n-e}, \OO(ed)).$$ This is globally generated, since $e+2 \leq ed$ for our range of degrees. Thus, the dimension is $$(e-1)(ed-e-1)+ (n-e)(ed-e+1)=  end- e(n+1+d) +n +1.$$ Thus, we see that $\phi$ is necessarily surjective. Moreover, it follows that the inequality in Lemma \ref{degenLem} is an equality.
\end{proof}

For studying complete intersections, we will need to extend Proposition \ref{prop-surjective} inductively.

\begin{proposition}\label{passingtohypersurface}
Let $X$ be a general complete intersection in $\PP^n$ of type $(d_1, \dots, d_k)$, degree $d= \sum_{i=1}^k d_i$, and  dimension at least $3$ containing a rational normal curve $R_e$. Let $d_{k+1} \geq 3$ be an integer such that $d_i \leq d_{k+1}$ for $1 \leq i \leq k$. Then for a general hypersurface $Y$ of degree $d_{k+1}$ containing $R_e$, $N_{R_e/Y \cap X}$ corresponds to the kernel of a general element of $\Hom(N_{R_e/X}, \OO_{\PP^1}(ed_{k+1}))$. In particular, if $N_{R_e/X}$ is balanced, then $N_{R_e/Y \cap X}$ is also balanced.
\end{proposition}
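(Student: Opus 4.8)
The plan is to realize $Y\cap X$ as obtained from $X$ by intersecting with one more hypersurface $Y$ of degree $d_{k+1}$, and to match the normal bundle sequence with the Hom-space analysis. Starting from the inclusions $R_e \subset Y\cap X \subset X$, there is a standard exact sequence of normal bundles
\[ 0 \longrightarrow N_{R_e/Y\cap X} \longrightarrow N_{R_e/X} \longrightarrow N_{Y\cap X/X}|_{R_e} \longrightarrow 0. \]
Since $Y\cap X$ is cut out inside $X$ by the single equation $f$ defining $Y$, restricted to $X$, the conormal bundle $N_{Y\cap X/X}^*$ is $\OO_X(-d_{k+1})|_{R_e}$, so $N_{Y\cap X/X}|_{R_e} = \OO_{\PP^1}(ed_{k+1})$. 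Thus $N_{R_e/Y\cap X}$ is exactly the kernel of a map $\psi_Y \in \Hom(N_{R_e/X}, \OO_{\PP^1}(ed_{k+1}))$, and the content of the proposition is that as $Y$ ranges over degree $d_{k+1}$ hypersurfaces through $R_e$, the associated $\psi_Y$ sweeps out a general element of this Hom-space.

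\textbf{Identifying the map.} Just as in the construction of $\phi$ leading to the sequence (\ref{eq-2}), I would set up the analogue of (\ref{eq-standardseq}) with $X$ in place of $\PP^n$: namely
\[ 0 \longrightarrow \II_{Y\cap X/X} \longleftarrow \II_{R_e/X} \longleftarrow \OO_X(-d_{k+1}) \]
and dualize to land in $\Hom(N_{R_e/X}, \OO_{\PP^1}(ed_{k+1}))$. The key point is that the assignment $f \mapsto \psi_Y$ is precisely the map obtained by composing the restriction $H^0(\II_{R_e/\PP^n}(d_{k+1})) \to H^0(\II_{R_e/X}(d_{k+1}))$ with the analogue of $\phi$ built for the pair $(R_e, X)$. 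So I need to show this composite is surjective onto $\Hom(N_{R_e/X}, \OO_{\PP^1}(ed_{k+1}))$.

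\textbf{Surjectivity.} Here I would invoke Theorem \ref{surjectivityPhi}(1): for the pair $(R_e,\PP^n)$ and degree $d_{k+1}\geq 3$, the map $\phi\colon H^0(\II_{R_e/\PP^n}(d_{k+1})) \to \Hom(N_{R_e/\PP^n}, \OO_{\PP^1}(ed_{k+1}))$ is surjective. Composing with the surjection $\Hom(N_{R_e/\PP^n}, \OO_{\PP^1}(ed_{k+1})) \to \Hom(N_{R_e/X}, \OO_{\PP^1}(ed_{k+1}))$ coming from the inclusion $N_{R_e/X}\hookrightarrow N_{R_e/\PP^n}$ (this last map is surjective because $\Ext^1$ of the quotient $\OO_{\PP^1}(ed)$ against $\OO_{\PP^1}(ed_{k+1})$ vanishes, as all the relevant degrees are large) shows that a general $f\in H^0(\II_{R_e/\PP^n}(d_{k+1}))$ gives a general $\psi_Y$. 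The hypothesis $d_i\le d_{k+1}$ guarantees we may genuinely extend the complete intersection by a hypersurface of this degree while keeping $Y$ general, and the dimension at least $3$ assumption keeps the Hom-space the relevant one. Finally, by Corollary \ref{cor-generalkernelnormalbundle}, or directly by Lemma \ref{lem-generalkernel}, a general element of $\Hom(N_{R_e/X},\OO_{\PP^1}(ed_{k+1}))$ has balanced kernel whenever $N_{R_e/X}$ is balanced, since the balancedness hypothesis on $N_{R_e/X}$ together with $d_{k+1}\geq 3$ places us in the regime $d\geq a+\delta_{0,m}$ of Lemma \ref{lem-generalkernel}(2).

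\textbf{Main obstacle.} The delicate step is verifying that the restriction map on sections $H^0(\II_{R_e/\PP^n}(d_{k+1})) \to H^0(\II_{R_e/X}(d_{k+1}))$ together with the inclusion-induced map on Hom-spaces really transports \emph{general} elements to \emph{general} elements, rather than merely surjecting; i.e.\ that the image of a general $f$ is general enough to apply the balancedness criterion. This requires knowing that the composite map is surjective, which follows from Theorem \ref{surjectivityPhi}(1), and that surjectivity of a linear map sends a general point to a general point of the target, which is automatic. The other subtlety is confirming the $\Ext^1$ vanishing that makes the Hom-space surjection valid, but this is a routine degree check given that all summands of $N_{R_e/\PP^n}$ have degree at least $e$ and $ed_{k+1}\geq 3e$.
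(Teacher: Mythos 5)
Your proposal is correct and follows the paper's own proof essentially verbatim: apply $\Hom(-,\OO_{\PP^1}(ed_{k+1}))$ to the sequence $0 \to N_{R_e/X} \to N_{R_e/\PP^n} \to N_{X/\PP^n}|_{R_e} \to 0$, combine the surjectivity of $\phi$ from Theorem \ref{surjectivityPhi}(1) (i.e.\ Proposition \ref{prop-surjective}) with the $\Ext^1$-vanishing to see that a general $Y$ induces a general element of $\Hom(N_{R_e/X},\OO_{\PP^1}(ed_{k+1}))$, then conclude balancedness via Lemma \ref{lem-generalkernel}(2). One small correction: the quotient is $N_{X/\PP^n}|_{R_e}\cong \bigoplus_{i=1}^k \OO_{\PP^1}(ed_i)$, not $\OO_{\PP^1}(ed)$, and the hypothesis $d_i \le d_{k+1}$ is used exactly to make $\Ext^1\bigl(\bigoplus_{i=1}^k \OO_{\PP^1}(ed_i),\OO_{\PP^1}(ed_{k+1})\bigr)$ vanish, rather than (as you suggest) to keep $Y$ general when extending the complete intersection.
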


\begin{proof}
Applying $\Hom(-, \OO_{\PP^1}(ed_{k+1}))$ to the standard exact sequence $$0 \rightarrow N_{R_e/X} \rightarrow N_{R_e/\PP^n} \rightarrow N_{X/\PP^n}|_{R_e} \cong \bigoplus_{i=1}^k \OO_{\PP^1}(ed_i) \rightarrow 0,$$ we obtain a long exact sequence
$$\Hom(N_{R_e/\PP^n}, \OO_{\PP^1}(ed_{k+1})) \stackrel{\psi}{\rightarrow} \Hom(N_{R_e/X}, \OO_{\PP^1}(ed_{k+1})) \rightarrow \Ext^1 \left(\bigoplus_{i=1}^k \OO_{\PP^1}(ed_i), \OO_{\PP^1}(ed_{k+1}) \right).$$ Since $d_{k+1} \geq d_i$, we conclude that the $\Ext^1$-term vanishes and the  map  $\psi$ is surjective. By Proposition \ref{prop-surjective}, the general $Y$ induces a general homomorphism in $\Hom(N_{R_e/\PP^n}, \OO_{\PP^1}(ed_{k+1}))$. By the surjectivity of $\psi$, this $Y$ also induces a general element of $\Hom(N_{R_e/X}, \OO_{\PP^1}(ed_{k+1}))$. Hence, for a general $Y$, $N_{R_e/Y \cap X}$ is given as the kernel 
$$0 \rightarrow N_{R_e/Y \cap X} \rightarrow N_{R_e/X} \rightarrow \OO_{\PP^1}(ed_{k+1}) \rightarrow 0$$ for a general homomorphism. 

If $N_{R_e/X}$ is balanced, then its summands have degrees $\lfloor \frac{e(n-d+1)-2}{n-k-1} \rfloor$ and $\lceil \frac{e(n-d+1)-2}{n-k-1} \rceil$. Since $d_{k+1} \geq 3$, we have $ed_{k+1} > \frac{e(n-d+1)-2}{n-k-1}$. Lemma \ref{lem-generalkernel} (2) implies the last statement.\end{proof}

The proof in the case $e=n+1$ is similar to the proof of Proposition \ref{prop-surjective}. 

\begin{proposition}
\label{prop-lastcase}
Let $d, n \geq 5$. Then the  map $\phi: H^0(\II_{C}(d)) \to \Hom(N_{C/\PP^n}, \OO_{\PP^1}(d(n+1)))$ is surjective for a general rational curve $C$ of degree $n+1$ in $\PP^n$.
\end{proposition}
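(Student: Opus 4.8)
The plan is to mimic the dimension count of Proposition~\ref{prop-surjective} exactly, replacing the two estimates for the rational normal curve with their degree $n+1$ analogues. Since the kernel of $\phi$ is $H^0(\II_{C/\PP^n}^2(d))$, surjectivity of $\phi$ is equivalent to the statement that the image has dimension equal to $\dim \Hom(N_{C/\PP^n}, \OO_{\PP^1}(d(n+1)))$; and in general the image dimension is bounded below by $h^0(\II_C(d)) - h^0(\II_C^2(d))$. So the strategy is: first compute $h^0(\II_C(d))$, then invoke Lemma~\ref{lem-lastcase} for the upper bound on $h^0(\II_C^2(d))$, subtract to get a lower bound on the image dimension, and finally check that this lower bound already equals $\dim \Hom(N_{C/\PP^n}, \OO_{\PP^1}(d(n+1)))$, forcing surjectivity.

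First I would pin down the two ``outer'' numbers. For a general rational curve $C$ of degree $n+1$ in $\PP^n$, the normal bundle $N_{C/\PP^n}$ is balanced of the appropriate type (this is where degree $n+1$ differs from the $e\le n$ case, and one should record the balanced splitting type; the total degree is $(n+1)(n+1)-2 = n^2+2n-1$ distributed over $n-1$ summands). Since $d\ge 5$ the bundle $N_{C/\PP^n}^*\otimes\OO_{\PP^1}(d(n+1))$ is globally generated, so $\dim\Hom(N_{C/\PP^n},\OO_{\PP^1}(d(n+1)))$ is computed by the naive Euler-characteristic formula $\sum_i (d(n+1)-a_i+1)$; this equals $(n-1)(d(n+1)+1) - (n^2+2n-1)$. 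For $h^0(\II_C(d))$ I would use that a general degree $n+1$ rational curve imposes the expected $(n+1)d+1$ conditions on degree $d$ hypersurfaces, giving $h^0(\II_C(d)) = \binom{n+d}{d} - (n+1)d - 1$.

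The heart of the argument is then purely arithmetic: subtracting the bound $h^0(\II_C^2(d)) \le \binom{2n}{n} - (d-1)(n^2+n) - 1$ from $h^0(\II_C(d))$ and matching the difference against $\dim\Hom$. Here there is a subtlety worth flagging: Lemma~\ref{lem-lastcase} is stated with the binomial $\binom{2n}{n}$, which is $\binom{n+d}{d}$ only when $d=n$, so I would either specialize to $d=n$ (the case relevant for the very free curve application, where $X$ is a degree $n$ hypersurface in $\PP^n$) or re-run the counting argument of Lemma~\ref{lem-lastcase} for general $d$ to obtain $h^0(\II_C^2(d)) \le \binom{n+d}{d} - (d-1)(n^2+n) - 1$. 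With the correct binomial in place, the image dimension is at least $(d-1)(n^2+n) + 1 - (n+1)d - 1 = (d-1)n^2 + (d-2)n - 2n = \dots$, which I would verify coincides with the $\Hom$-dimension computed above; equality then forces $\phi$ surjective (and, as in Proposition~\ref{prop-surjective}, shows the inequality of Lemma~\ref{lem-lastcase} is sharp).

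The main obstacle is the bookkeeping in reconciling the two formulas, i.e. making sure the binomial coefficient and the linear-in-$d$ correction terms in Lemma~\ref{lem-lastcase} are the ones actually needed, and confirming that the resulting lower bound on the image exactly hits $\dim\Hom$ rather than falling short. The conceptual steps (global generation of the twisted dual normal bundle, expected number of conditions, image-dimension bound via the kernel) are all routine given the earlier results; the risk is entirely in the constants. I would therefore devote the bulk of the write-up to carefully tabulating the condition counts and checking the final numerical identity, treating the $d=n$ case first as a sanity check before stating the general-$d$ conclusion.
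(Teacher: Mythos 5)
Your proposal follows essentially the same route as the paper's proof: the kernel of $\phi$ is $H^0(\II_{C}^2(d))$, bounded above by Lemma \ref{lem-lastcase}; $h^0(\II_C(d)) = \binom{n+d}{d} - (d(n+1)+1)$, which the paper justifies by $d$-regularity of nondegenerate curves (Gruson--Lazarsfeld--Peskine) rather than by merely asserting expected conditions, and you should do the same; and the resulting lower bound $(d-1)(n^2+n) - d(n+1) = (n+1)(nd-n-d)$ on the image matches $\dim \Hom(N_{C/\PP^n}, \OO_{\PP^1}(d(n+1)))$ exactly, forcing surjectivity. Your suspicion about the binomial is correct: $\binom{2n}{n}$ in Lemma \ref{lem-lastcase} is a typo for $\binom{n+d}{d}$, and since the same binomial enters $h^0(\II_C(d))$ it cancels in the subtraction, leaving the count unaffected; note, though, that your intermediate expression $(d-1)n^2+(d-2)n-2n$ is garbled --- the common value of both sides is $(d-1)n^2 - n - d$. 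The one substantive deviation is your appeal to balancedness of $N_{C/\PP^n}$ for a general degree-$(n+1)$ curve: the paper works over a field of arbitrary characteristic, where that balancedness is not established (the Jacobian methods behind such results require $p \nmid e$), and it is also unnecessary. The paper instead uses the characteristic-free Euler-sequence bounds: every summand of $N_{C/\PP^n}$ has degree $a_i \geq n+1$, and since $\sum_i a_i = (n+1)^2-2$, every $a_i \leq 3n+1 \leq d(n+1)$ for $d,n \geq 5$; this alone gives global generation of $\sHom(N_{C/\PP^n}, \OO_{\PP^1}(d(n+1)))$ and vanishing of $h^1$, and your dimension formula $(n-1)(d(n+1)+1)-(n^2+2n-1)$ depends only on the total degree anyway. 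With that substitution your plan is complete and coincides with the paper's argument.
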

\begin{proof}
The kernel of $\phi$ is $H^0(\II_{C/\PP^n}^2(d))$ and by Lemma \ref{lem-lastcase} has dimension at most
\[ \binom{2n}{n} - (d-1)(n^2+n)-1 .\]
Since $C$ is nondegenerate, it is $d$-regular for any $d \geq 2$ \cite{GrusonLazarsfeldPeskine} and we have
\[ h^0(\II_{C/\PP^n}(d)) = \binom{2n}{n} - (d(n+1)+1) .\]
Thus, the image of $\phi$ has dimension at least $(n+1)(nd-n-d)$. 

By the Euler sequence for $\PP^n$ and the sequence defining $N_{C/\PP^n}$, every factor of $N_{C/\PP^n}$ has degree at least $n+1$.  Since the total degree is $(n+1)^2 -2$, every summand of $N_{C/\PP^n}$ has degree at most $3n+1$.  Since $n, d \geq 5$, we see that $3n+1 \leq ed=(n+1)d$, which means $\Hom(N_{C/\PP^n}, \OO_{\PP^1}((n+1)d))$ is globally generated, and hence has dimension
\[ (n-1) + (n-1)(n+1)d - (n+1)^2+2 = (n+1)(nd-n-d) .  \]
We conclude that $\phi$ is surjective.
\end{proof}

\begin{proof}[Proof of Theorem \ref{surjectivityPhi}]
The surjectivity of $\phi$ in case (1) is proved in Proposition \ref{prop-surjective} and in case (2) in Proposition \ref{prop-lastcase}. The theorem follows.
\end{proof}

When $d=n=4$, the map $\phi$ is not surjective. For completeness, assuming $p \not= 5$, we  exhibit a rational curve of degree $5$ in a quartic threefold that has balanced normal bundle.

\begin{proposition}\label{4n+1}
Assume $p\not= 5$. Then the general rational curve of degree $5$ has a balanced normal bundle in a quartic threefold.
\end{proposition}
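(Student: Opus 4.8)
The plan is to realize $N_{C/X}$ as the kernel of the homomorphism $\psi = \phi(f)\colon N_{C/\PP^4} \to \OO_{\PP^1}(20)$ attached to a quartic $X = V(f)$ through $C$, and to show that although $\phi$ fails to be surjective in this borderline case, its image is still large enough to meet the locus of homomorphisms with balanced kernel. Here $C$ is a general rational curve of degree $e = 5$ in $\PP^4$ (with $p \neq 5$, so that the Euler--Jacobian description of its normal bundle is available), $N_{C/\PP^4}$ has rank $3$ and degree $e(n+1) - 2 = 23$ with every summand of degree at least $e = 5$ (as in the proof of Corollary \ref{cor-generalkernelnormalbundle}), and $N_{X/\PP^4}|_C = \OO_{\PP^1}(ed) = \OO_{\PP^1}(20)$. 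Thus $N_{C/X} = \ker \psi$ is the rank-$2$, degree-$3$ bundle we must show is balanced, i.e. equal to $\OO_{\PP^1}(1) \oplus \OO_{\PP^1}(2)$.

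First I would record that a general homomorphism has balanced kernel. Since $\frac{23 - 20}{3-1} = \frac32 < 5 \le \min_i\{a_i\}$, Lemma \ref{lem-generalkernel}(1) shows that the general $\psi \in \Hom(N_{C/\PP^4}, \OO_{\PP^1}(20))$ has balanced kernel $\OO(1)\oplus\OO(2)$; note $\dim \Hom(N_{C/\PP^4}, \OO_{\PP^1}(20)) = 3\cdot 21 - 23 = 40$ independently of the precise splitting of $N_{C/\PP^4}$. Let $B$ be the closed locus of homomorphisms that either fail to be surjective or have unbalanced kernel. The non-surjective homomorphisms have a common zero on $\PP^1$ and so form a locus of codimension at least $2$; the first unbalanced stratum, with kernel $\OO \oplus \OO(3)$, has codimension $h^1(\End(\OO\oplus\OO(3))) = 2$ by Lemma \ref{expCodimKernels} (the hypothesis $H^1((\OO\oplus\OO(3))^*\otimes N_{C/\PP^4}) = 0$ holds because every summand of $N_{C/\PP^4}(-3)$ has degree at least $2$), and the more unbalanced strata lie in its closure. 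Hence $\codim B \ge 2$, i.e. $\dim B \le 38$. Because $\phi$ is linear, it therefore suffices to prove $\dim(\operatorname{im}\phi) \ge 39$: a linear space of that dimension cannot be contained in $B$, so it meets the open locus of balanced kernels, producing a quartic $X \supset C$ with $N_{C/X} = \OO(1)\oplus\OO(2)$.

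The dimension of the image is controlled by $\dim \operatorname{im}\phi = h^0(\II_{C/\PP^4}(4)) - h^0(\II_{C/\PP^4}^2(4))$, the kernel of $\phi$ being $H^0(\II_{C/\PP^4}^2(4))$. Since a nondegenerate degree-$5$ curve in $\PP^4$ is $3$-regular \cite{GrusonLazarsfeldPeskine}, we get $h^0(\II_C(4)) = \binom{8}{4} - (ed+1) = 70 - 21 = 49$, so the claim $\dim\operatorname{im}\phi \ge 39$ reduces to the estimate $h^0(\II_{C/\PP^4}^2(4)) \le 10$. I would prove this exactly as in Lemma \ref{lem-lastcase}, degenerating $C$ to the chain of five lines $\ell_1 \cup \cdots \cup \ell_4 \cup \ell_5$ used there (specialized to $n = 4$) and counting the conditions imposed by double vanishing. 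The one delicate point is that the relations (\ref{eq-3})--(\ref{eq-4}) of that proof were shown to be independent only for $n \ge 5$; at the borderline $n = 4$ exactly one of them becomes dependent, so the count yields $60$ rather than $61$ independent conditions and hence $h^0(\II_{C_0}^2(4)) = 10$. Semicontinuity then gives $h^0(\II_C^2(4)) \le 10$ for the general $C$. (Consistently, this shows $\dim\operatorname{im}\phi = 39 < 40$, recovering the non-surjectivity of $\phi$ noted before the statement.)

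Combining the two halves, $\dim(\operatorname{im}\phi) \ge 39 > 38 \ge \dim B$, so $\operatorname{im}\phi \not\subseteq B$ and some quartic through $C$ cuts out a balanced normal bundle; since being balanced is an open condition and the family of pairs $(C,X)$ is irreducible, the general degree-$5$ rational curve in a quartic threefold has balanced normal bundle. The main obstacle is the borderline condition-count $h^0(\II_{C/\PP^4}^2(4)) \le 10$: one must track which of the independence statements of Lemma \ref{lem-lastcase} survive at $n = 4$ and verify that precisely one condition is lost, since a further drop would leave $\operatorname{im}\phi$ too small to force a balanced kernel. Verifying the codimension-$2$ bound for $B$ via Lemma \ref{expCodimKernels} is a secondary, routine check, and the hypothesis $p \neq 5$ enters only to guarantee that $C$ is a general immersion with the expected normal-bundle behaviour.
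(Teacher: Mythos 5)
Your strategy is genuinely different from the paper's (which simply exhibits the explicit quintic $[s^5,s^4t,s^2t^3,st^4,t^5]$, computes its ideal generators and syzygies, and writes down an explicit quartic with balanced kernel, separately in characteristic $2$ and characteristic $\neq 2$), and the pigeonhole scaffolding --- image of $\phi$ of dimension $\geq 39$ versus a bad locus $B$ of dimension $\leq 38$ --- is sound. But the proof has a genuine gap exactly at the point you flagged as the main obstacle: the claim that at $(n,d)=(4,4)$ the chain degeneration of Lemma \ref{lem-lastcase} loses \emph{exactly one} independent condition is false; it loses two. First, in the family (\ref{eq-3}) the relation with $(m,k)=(1,0)$ is $\sum_{i=0}^{3} c^1_{i,3-i}=0$, and every one of its four monomials $x_1x_2^3$, $x_0x_1x_2^2$, $x_0^2x_1x_2$, $x_0^3x_1$ already has vanishing coefficient by the conditions of Lemma \ref{degenLem} (via $\ell_1$ and $\ell_2$), so the relation is dependent; the terms $x_1x_0^ix_2^{d-1-i}$ with $2\leq i\leq d-3$ that keep this relation alive for $d\geq 5$ simply do not exist at $d=4$ (note this loss is forced by $d=4$, not by $n=4$). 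Second, among the conditions $\ell_5$ imposes on the monomials in $x_0,x_2,x_4$ alone, the derivative relation $\sum_i i\,c_{i,4-i}=0$ (vanishing of the coefficient of $u\,x_2^3$ after substituting $x_0=u+x_2$) reduces modulo Lemma \ref{degenLem} to $2c_{2,2}=0$, a scalar multiple of the reduced restriction relation $c_{2,2}=0$, hence dependent in every characteristic. A full check shows $C_0$ imposes $50+9=59$, not $60$, independent conditions, so the degeneration yields only $h^0(\II_{C_0}^2(4))=11$ and hence $\dim\operatorname{im}\phi\geq 38$; since $\dim B$ can also be $38$, the dimension comparison collapses (a $38$-dimensional linear space can lie inside a $38$-dimensional cone).

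The numerical endpoint you need is nevertheless true: for the general quintic $h^0(\II_C^2(4))=10$, i.e.\ $\operatorname{coker}\phi$ is one-dimensional --- but it cannot be extracted from the chain of lines, which is too special at $(4,4)$. One way to repair your argument is to verify corank one on an explicit smooth quintic and conclude by semicontinuity: for the paper's curve, the data $\alpha(F_1)=t^3g_{1,7}$, $\alpha(F_3)=s^2tg_{1,7}+t^2g_{4,8}$, $\alpha(F_4)=sg_{4,9}$, $\alpha(F_5)=s^7g_{5,8}$ (characteristic $\neq 2$) express $\phi$ as $(Q_1,\dots,Q_4,L)\mapsto (A_1,A_2,A_3)$ with each $Q_i|_C$ arbitrary in $H^0(\OO_{\PP^1}(10))$ and $L|_C$ constrained to the $5$-dimensional subspace of $H^0(\OO_{\PP^1}(5))$ missing $s^3t^2$, and a short computation with dual functionals shows the cokernel is exactly one-dimensional. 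But this requires syzygy computations of essentially the same explicitness as the paper's own proof (and, as in the paper, characteristic $2$ must be handled separately, since both the syzygies and the splitting of $N_{C/\PP^4}$ change there). As written, the proposal does not prove the proposition.
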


\begin{proof}
It suffices to exhibit one rational curve $C$ and one hypersurface $X$ containing $C$ such that $N_{C/X}$ is balanced. 
Let $C$ be the image of the map $$f= [s^5, s^4t, s^2 t^3, st^4, t^5]: \PP^1 \rightarrow \PP^4.$$
We can compute the normal bundle of $C$ from the kernel of $\partial f$. This computation depends on the characteristic. 

$$\partial f = \left( \begin{array}{ccccc} 5s^4 & 4s^3 t & 2s t^3 & t^4 & 0 \\ 0 & s^4 & 3 s^2 t^2 & 4 st^3 & 5 t^4 \end{array} \right).$$ When $p=2$, many of the entries vanish and there are the following relations among the columns $C_0, \dots, C_4$ of $\partial f$:
$$t^4 C_0 = s^4 C_3, \quad  t^2 C_1 = s^2 C_2, \quad t^2 C_2 = s^2 C_4.$$ Consequently, the normal bundle is given by $$N_{C/\PP^4} = \OO_{\PP^1}(7) \oplus \OO_{\PP^1}(7) \oplus \OO_{\PP^1}(9).$$ In characteristics $p \not= 2, 3, 5$, the 
relations are $$2t^3C_0 -3st^2 C_1 + s^3 C_2= t^3 C_1 -3s^2t C_2 + 2s^3 C_3=  t^2 C_2 -2st C_3 +t^2C_4 = 0.$$ When $p=3$, the relations change slightly but their degrees remain the same. Consequently, when $p \not= 2,5$, the
splitting is $\OO_{\PP^1}(7) \oplus \OO_{\PP^1}(8)^2.$

Now we compute $N_{C/X}$ for a general $X$ containing $C$. The calculation is most interesting when $p=2$.  A simple Macaulay2 \cite{M2} calculation shows that the ideal of $C$ is generated by 
$$F_1= z_3^2 - z_2 z_4, \quad F_2= z_1 z_3-z_0z_4, \quad F_3 = z_2^2-z_1z_4, \quad F_4= z_1z_2 - z_0z_3, \quad F_5= z_1^3-z_0^2 z_2$$
and these generators satisfy the following syzygies 
$$\left( \begin{array}{ccccc}  z_1 & -z_3 & 0 & z_4 & 0 \\ z_0 & -z_2 & 0 & z_3 & 0 \\ 0 & z_2^2-z_1z_4 & z_0z_4 - z_1z_3 & 0 & 0 \\ z_2^2 & -z_3 z_4 & z_2z_4 - z_3^2 & z_4^2 & 0 \\ 0 & 0 & z_1z_2 - z_0 z_3 & z_1z_4 - z_2^2 & 0 \\ 0 & z_0 z_1 & - z_0^2 & z_1^2  & - z_2 \\  0 & z_0 z_2 & -z_1^2 & z_1 z_2 &- z_4 \\ 0 & z_1^2 &  -z_0 z_1 & z_0 z_2 & -z_3 \end{array} \right) \left( \begin{array}{c} F_1 \\ F_2 \\ F_3 \\ F_4 \\ F_5 \end{array}\right) = 0.$$

Let $\alpha \in N_{C/\PP^4} = \Hom(\II_{C/\PP^n}, \OO_C)$. Express a hypersurface $F$ containing $C$ as $\sum a_i F_i =0$. We would like to find generators of the normal bundle. From the syzygies given by the first and sixth rows of the matrix, we have the relations
$$ s^4t \alpha(F_1) - st^4 \alpha(F_2) + t^5 \alpha (F_4) = 0$$
$$ s^9 t \alpha(F_2) - s^{10} \alpha(F_3) + s^8 t^2 \alpha(F_4) - s^2 t^3 \alpha(F_5) = 0$$

The first equation implies that we can find $g_{1,7}$ and $g_{4,9}$ so that
 $$\alpha(F_1) = t^3 g_{1, 7}, \quad \alpha(F_4) = s g_{4,9}, \quad \mbox{and} \quad \alpha(F_2) = s^3   g_{1, 7} + t g_{4,9}.$$ The second equation in characteristic 2 implies that we can find $g_{5,7}$ so that
$$\alpha(F_5) = s^8 g_{5, 7}, \quad \mbox{and} \quad \alpha(F_3) = s^2 t g_{1,7} - t^3 g_{5,7}.$$ We can take the local generators of $N_{C/\PP^n}$ to be $g_{1,7}, g _{4,9}$ and $g_{5,7}$, homogeneous polynomials of degrees $7,9$ and $7$, respectively. When the characteristic is not 2, the second relation gives shows that $s$ divides $tg_{5,8}-2g_{4,9}$. Letting $g_{4,8}$ be defined by $sg_{4,8} = -tg_{5,8}+2g_{4,9}$, we have $$\alpha(F_5) = s^7 g_{5, 8}, \quad 2 g_{4,9} = s g_{4,8} + t g_{5,8}, \quad \alpha(F_3) = s^2t g_{1,7} + t^2 g_{4,8}.$$ In this case, we can take the local generators to be $g_{1,7}, g_{4,8}, g_{5,8}$ of degrees $7, 8$ and $8$.

In characteristic 2, we can then explicitly write down the map $\Psi$ in the exact sequence
$$0 \longrightarrow N_{C/X} \longrightarrow N_{C/\PP^4} \stackrel{\Psi}{\longrightarrow}  N_{X/\PP^4}|_C.$$ For simplicity assume that the equation of the hypersurface $X$ is given by $$F= Q_{1,2} F_1 + Q_{4,2} F_4 + L_{5,1} F_5=0,$$ where $Q_{1,2}, Q_{4,2}$ are homogeneous polynomials of degree $2$ and $L_{5,1}$ is a homogeneous polynomial of degree $1$. Then $$\Psi = [ Q_{1,2}|_C t^3, Q_{4,2}|_C s, L_{5,1}|_C s^8] : \OO_{\PP^1}(7) \oplus \OO_{\PP^1}(9) \oplus \OO_{\PP^1}(7) \longrightarrow \OO_{\PP^1}(20).$$ If we take $$Q_{1,2}|_C = t^{10}, \quad Q_{4,2}|_C= s^5 t^5, \quad L_{5,1}|_C= s^5,$$ then the kernel is given by $\OO_{\PP^1}(1) \oplus \OO_{\PP^1}(2)$. If $X$ is the hypersurface defined by  $$z_4^2 F_1 + z_0 z_4 F_4 + z_0 F_5=0,$$ then we get the desired $N_{C/X}$. When the characteristic is not 2, a similar argument gives $$\Psi = [Q_{1,2}|_C t^3, 2^{-1} Q_{4,2}|_C s^2 , 2^{-1}Q_{4,2}|_C st + L_{5,1}|_C s^7] :\OO_{\PP^1}(7) \oplus \OO_{\PP^1}(8)^2  \longrightarrow \OO_{\PP^1}(20).$$ If $X$ is the hypersurface defined by $z_4^2 F_1 + 2 z_2^2 F_4 + z_0 F_5 =0$, then $N_{C/X}$ is balanced. This concludes the proof.
\end{proof}

\subsection*{Applications of Theorem \ref{surjectivityPhi}} We now give several applications of Theorem \ref{surjectivityPhi} to separable rational connectedness. These results will be later subsumed by more general theorems. We include them here to emphasize their simplicity.

\begin{corollary}\label{hyp}
Let $X \subset \PP^n$ be a general  hypersurface of degree $d \geq 3$ containing a rational normal curve $R_e$. Then $N_{R_e/X}$ is balanced. Moreover, if $X$ is Fano and $e \geq \frac{n}{n-d+1}$, then $N_{R_e/X}$ is very ample. 
\end{corollary}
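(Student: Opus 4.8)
The plan is to read off the result from the surjectivity of $\phi$ together with the general-kernel computation of Corollary~\ref{cor-generalkernelnormalbundle}. Recall from the sequence (\ref{eq-2}) that for a degree $d$ hypersurface $X = V(f)$ containing $R_e$, the normal bundle $N_{R_e/X}$ is the kernel of $\psi = \phi(f) \in \Hom(N_{R_e/\PP^n}, \OO_{\PP^1}(ed))$. First I would invoke Theorem~\ref{surjectivityPhi}(1) (equivalently Proposition~\ref{prop-surjective}): since $d \geq 3$, the map $\phi$ is surjective. Being balanced is an open condition on $\Hom(N_{R_e/\PP^n}, \OO_{\PP^1}(ed))$, and this open locus is nonempty by Corollary~\ref{cor-generalkernelnormalbundle} (which applies because $d \geq 3 \geq 2$). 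As $\phi$ is a surjective linear map, the preimage of this locus is a nonempty open subset of $H^0(\II_{R_e}(d))$. Hence for a general $f$, i.e. a general hypersurface $X$ of degree $d$ containing $R_e$, the bundle $N_{R_e/X}$ is balanced.

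One minor point I would verify is that the general $\psi$ is a surjective map of sheaves, so that its kernel genuinely has rank $n-2$ and coincides with $N_{R_e/X}$ for a smooth $X$. This holds because $N_{R_e/\PP^n} = \OO_{\PP^1}(e+2)^{e-1} \oplus \OO_{\PP^1}(e)^{n-e}$ has all summands of degree at most $e+2 \leq ed$; in particular a factor receives a twist $ed - e = e(d-1) \geq 2$, so a general $\psi$ restricts there to a pair of coprime sections and is therefore surjective.

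For the ``Moreover'' statement I would compute the splitting degree directly. The sequence $0 \to T_{R_e} \to T_{\PP^n}|_{R_e} \to N_{R_e/\PP^n} \to 0$ gives $\deg N_{R_e/\PP^n} = e(n+1) - 2$, and (\ref{eq-2}) then yields $\deg N_{R_e/X} = e(n+1) - 2 - ed = e(n-d+1) - 2$ for a bundle of rank $n-2$. Since $X$ is Fano we have $d \leq n$, so $n-d+1 \geq 1$ and this quantity is positive for $e$ large. The hypothesis $e \geq \frac{n}{n-d+1}$ is exactly equivalent to $e(n-d+1) - 2 \geq n-2$, i.e. to the average summand degree being at least $1$. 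As $N_{R_e/X}$ is balanced by the first part, each of its $n-2$ summands then has degree $\geq 1$, so $N_{R_e/X}$ is ample and hence very ample on $\PP^1$.

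I do not expect a serious obstacle: the real content is already packaged in the surjectivity of $\phi$, so the argument is mostly bookkeeping. The only genuinely new steps are the numerical translation of the bound $e \geq \frac{n}{n-d+1}$ into positivity of the balanced summands, and the small sheaf-surjectivity check in the second paragraph, neither of which should cause difficulty.
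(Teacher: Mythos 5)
Your proposal is correct and takes essentially the same approach as the paper: surjectivity of $\phi$ (Theorem \ref{surjectivityPhi}) turns a general hypersurface through $R_e$ into a general element of $\Hom(N_{R_e/\PP^n}, \OO_{\PP^1}(ed))$, Corollary \ref{cor-generalkernelnormalbundle} then gives a balanced kernel, and the bound $e \geq \frac{n}{n-d+1}$ is translated, exactly as in the paper, into the statement that the balanced bundle of degree $e(n-d+1)-2$ and rank $n-2$ has all summands of positive degree. Your additional check that a general $\psi$ is surjective as a sheaf map is not made explicit in the paper but is implicit in the kernel construction of Lemma \ref{lem-generalkernel}, so it adds rigor without changing the argument.
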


\begin{proof}
By Theorem \ref{surjectivityPhi}, the map $\phi$ surjects onto $\Hom(N_{R_e/\PP^n}, \OO_{\PP^1}(ed))$. Since $N_{R_e/\PP^n}\cong \OO_{\PP^1}(e+2)^{e-1} \oplus \OO_{\PP^1}(e)^{n-e}$, by Corollary \ref{cor-generalkernelnormalbundle}, the kernel of a general homomorphism is balanced.  This kernel is $N_{R_e/X}$, hence $N_{R_e/X}$ is balanced. A vector bundle on $\PP^1$ is very ample if all the summands have positive degree. Since the bundle is balanced, this happens precisely when  its degree is at least its rank, equivalently when $e(n-d+1)-2 \geq n-2$. Since $n-d+1 > 0$, this is equivalent to $e \geq \frac{n}{n-d+1}$.
\end{proof}

\begin{corollary}
\label{cor-Zhu}
A general Fano hypersurface of degree $d$ in $\PP^n$ is separably rationally connected.  Furthermore, such a hypersurface contains a very free curve of degree $\lceil \frac{n}{n-d+1}\rceil$.
\end{corollary}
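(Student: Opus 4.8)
The plan is to deduce Corollary~\ref{cor-Zhu} directly from Corollary~\ref{hyp} together with the twisting observation made in the introduction. First I would recall that by the definition given in the introduction, to produce a separably rationally connected hypersurface it suffices to exhibit a single very free rational curve on $X$; indeed, by the standard sequence
\[ 0 \longrightarrow T_{\PP^1} = \OO_{\PP^1}(2) \longrightarrow f^* T_X \longrightarrow N_f \longrightarrow 0 \]
twisted by $\OO_{\PP^1}(-2)$, if every summand of $N_f$ has degree at least $1$, then $f^*T_X$ is globally generated after twisting by $\OO_{\PP^1}(-1)$, so the curve is very free and $X$ is separably rationally connected by \cite[Corollary 4.17]{Debarre}.

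The main step is to choose the degree $e$ optimally and apply Corollary~\ref{hyp}. I would set $e = \lceil \frac{n}{n-d+1}\rceil$, which is the smallest integer satisfying the numerical hypothesis $e \geq \frac{n}{n-d+1}$ of Corollary~\ref{hyp}. Since $X$ is Fano we have $d \leq n$, so $n-d+1 \geq 1$ and this value of $e$ is a positive integer with $e \leq n$; thus $R_e$ is a genuine rational normal curve in $\PP^n$. Corollary~\ref{hyp} then gives that for a general hypersurface $X$ of degree $d \geq 3$ containing $R_e$, the normal bundle $N_{R_e/X}$ is balanced and, because $e \geq \frac{n}{n-d+1}$, very ample, meaning every summand of $N_{R_e/X}$ has degree at least $1$. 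By the observation of the previous paragraph applied to $N_f = N_{R_e/X}$, the curve $R_e$ is very free on $X$, so $X$ is separably rationally connected and contains a very free curve of the stated degree $\lceil \frac{n}{n-d+1}\rceil$.

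I expect the only genuine subtlety to be the boundary behaviour of the degree hypothesis and the restriction $d \geq 3$ of Corollary~\ref{hyp}. One must confirm that the chosen $e = \lceil \frac{n}{n-d+1}\rceil$ indeed lies in the range $e \leq n$ where $R_e$ exists and Corollary~\ref{hyp} applies; this follows since $\frac{n}{n-d+1} \leq n$ whenever $n-d+1 \geq 1$, i.e.\ for all Fano $X$. The case $d = 2$ (quadrics) is excluded here because Corollary~\ref{hyp} requires $d \geq 3$, and this case is precisely what the authors defer to the separate analysis of \S\ref{sec-quadrics}; so the present corollary should be stated and proved only for $d \geq 3$, with the general statement following later. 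Beyond tracking these numerical constraints, the argument is a direct invocation of the balancedness and very-ampleness already established, so no further hard input is needed.
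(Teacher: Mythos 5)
Your argument for $d\geq 3$ is exactly the paper's: choose $e=\lceil \frac{n}{n-d+1}\rceil$, invoke Corollary~\ref{hyp} to get a balanced, very ample $N_{R_e/X}$, and conclude very freeness via the twisted sequence $0\to T_{\PP^1}\to f^*T_X\to N_f\to 0$ from the introduction. The numerics you check ($e\leq n$ since $n-d+1\geq 1$, and $e$ minimal with $e\geq \frac{n}{n-d+1}$) are correct, and this part of the proposal is sound.

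The gap is your treatment of $d=2$. The corollary as stated covers \emph{all} general Fano hypersurfaces, and under the paper's standing convention the degrees satisfy $d\geq 2$, so quadrics are included; you cannot discharge that case by proposing to restate the corollary for $d\geq 3$ only. The paper closes this case in one line: ``Quadric hypersurfaces contain very free conics.'' Note that the claimed optimal degree is consistent with this, since for $d=2$ one has $\lceil \frac{n}{n-1}\rceil = 2$ for $n\geq 2$; and a conic $C$ on a smooth quadric $X\subset\PP^n$ has $\deg N_{C/X} = e(n-d+1)-2 = 2(n-1)-2 = 2(n-2)$ with $\operatorname{rank} N_{C/X}=n-2$, so a general such conic has balanced normal bundle $\OO_{\PP^1}(2)^{n-2}$, whence it is very free. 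Adding this observation (or any equivalent one-liner) to your argument would make it complete and identical in substance to the paper's proof; you are right that the deeper quadric analysis of \S\ref{sec-quadrics} is needed only for complete intersections involving several quadrics, not for a single quadric hypersurface.
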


\begin{proof}
Quadric hypersurfaces contain very free conics. For hypersurfaces of degree $d \geq 3$, the corollary follows from the previous corollary.
\end{proof}

Zhu proved Corollary \ref{cor-Zhu} in the special case $n=d$. The argument in Corollary \ref{hyp} together with Proposition \ref{prop-lastcase} shows the following corollary.

\begin{corollary} \label{cor-nPlusOneCase}
Let $X \subset \PP^n$ be a general degree $d$ hypersurface containing a general degree $n+1$ rational curve $C$. Assume that $d, n \geq 5$. Then $N_{C/X}$ is balanced.
\end{corollary}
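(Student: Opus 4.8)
The plan is to repeat the proof of Corollary \ref{hyp} almost word for word, replacing the appeal to Theorem \ref{surjectivityPhi} by Proposition \ref{prop-lastcase} and the rational normal curve $R_e$ by the general degree $n+1$ curve $C$. The structure of that argument never used that $C$ was projectively normal or that its normal bundle in $\PP^n$ was balanced; it used only the surjectivity of $\phi$ together with the balancedness of the kernel of a general homomorphism out of $N_{C/\PP^n}$.

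First I would invoke the exact sequence (\ref{eq-2}) in the case $e = n+1$,
$$0 \longrightarrow N_{C/X} \longrightarrow N_{C/\PP^n} \stackrel{\psi}{\longrightarrow} \OO_{\PP^1}((n+1)d),$$
which exhibits $N_{C/X}$ as the kernel of the homomorphism $\psi = \phi(f)$ associated to the equation $f$ cutting out $X$. Since $d, n \geq 5$ and $C$ is a general rational curve of degree $n+1$, Proposition \ref{prop-lastcase} guarantees that $\phi \colon H^0(\II_C(d)) \to \Hom(N_{C/\PP^n}, \OO_{\PP^1}((n+1)d))$ is surjective. Because $\phi$ is linear and surjective, the preimage of any dense open subset of the target is dense and open, so a general hypersurface $X$ containing $C$ gives rise to a general homomorphism $\psi$.

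I would then apply Corollary \ref{cor-generalkernelnormalbundle} with $e = n+1$ and $d \geq 5 \geq 2$: the kernel of a general $\psi \in \Hom(N_{C/\PP^n}, \OO_{\PP^1}((n+1)d))$ is balanced. As this kernel is $N_{C/X}$, we conclude that $N_{C/X}$ is balanced, which is exactly the assertion of the corollary.

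The one point meriting care—and the step I expect to look like an obstacle, even though it is ultimately routine—is verifying that Corollary \ref{cor-generalkernelnormalbundle} genuinely applies. One might worry that in positive characteristic the normal bundle of a general degree $n+1$ curve need not be balanced, so that it is unclear a priori whether the numerical hypothesis of Lemma \ref{lem-generalkernel}(1) holds. However, the proof of Corollary \ref{cor-generalkernelnormalbundle} shows that this hypothesis depends only on the inequality $\min_i\{a_i\} \geq e$, which is forced by the Euler sequence for any smooth rational curve, together with the fact that the slope of the kernel is strictly less than $e$ whenever $d \geq 2$. Both hold here, so no knowledge of the precise splitting type of $N_{C/\PP^n}$ is required, and the argument goes through unchanged.
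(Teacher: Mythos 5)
Your proof is correct and is essentially the paper's own argument: the paper establishes this corollary precisely by combining Proposition \ref{prop-lastcase} with the argument of Corollary \ref{hyp}, i.e., realizing $N_{C/X}$ as the kernel in the sequence (\ref{eq-2}) and applying Corollary \ref{cor-generalkernelnormalbundle} to a general homomorphism. Your closing remark is also on target --- Corollary \ref{cor-generalkernelnormalbundle} needs only $\min_i\{a_i\} \geq e$, which the Euler sequence forces for any smooth rational curve in any characteristic, so the precise splitting type of $N_{C/\PP^n}$ is irrelevant.
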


More generally, the same considerations apply to complete intersections. 

\begin{corollary}\label{no2s}
Let $X$ be a general complete intersection containing $R_e$. Assume $X$ has dimension at least $2$, multidegree $(d_1, \dots, d_k)$ with $d_i \geq 3$ for all $i$ and degree $d= \sum_{i=1}^k d_i$. Then $N_{R_e/X}$ is balanced. Moreover, if $X$ is Fano and  $e \geq \frac{n+1-k}{n-d+1}$, then $N_{R_e/X}$ is very ample.
\end{corollary}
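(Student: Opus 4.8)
The plan is to prove Corollary~\ref{no2s} by induction on the number $k$ of hypersurfaces defining $X$, using Proposition~\ref{passingtohypersurface} as the inductive step. The base case is a single hypersurface, which is handled by Corollary~\ref{hyp}: for a general hypersurface $Y_1$ of degree $d_1 \geq 3$ containing $R_e$, the normal bundle $N_{R_e/Y_1}$ is balanced. For the inductive step, suppose $X' = Y_1 \cap \cdots \cap Y_j$ is a general complete intersection of type $(d_1,\dots,d_j)$ containing $R_e$ with $N_{R_e/X'}$ balanced. Reordering so that the degrees are nondecreasing, Proposition~\ref{passingtohypersurface} applies with $d_{j+1} = d_{j+1} \geq 3$ and $d_i \leq d_{j+1}$: for a general hypersurface $Y_{j+1}$ of degree $d_{j+1}$ containing $R_e$, the bundle $N_{R_e/X' \cap Y_{j+1}}$ is the kernel of a general homomorphism in $\Hom(N_{R_e/X'}, \OO_{\PP^1}(ed_{j+1}))$, and since $N_{R_e/X'}$ is balanced the last sentence of that proposition guarantees $N_{R_e/X'\cap Y_{j+1}}$ is balanced as well. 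Iterating gives that $N_{R_e/X}$ is balanced for $X$ of type $(d_1,\dots,d_k)$, establishing the first claim.

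A small technical point to address is that Proposition~\ref{passingtohypersurface} requires $X'$ to have dimension at least $3$ at each intermediate stage, so the induction must be arranged so that $\dim X' = n - j \geq 3$ until the final step produces $X$ of dimension $n-k \geq 2$. Since we only assume $\dim X \geq 2$, the very last hypersurface may cut an ambient space of dimension $3$ down to dimension $2$; I would note that this final step is exactly the case covered by the dimension hypothesis of the proposition, and the surjectivity input it relies upon (Proposition~\ref{prop-surjective}) is stated for all $e \leq n$ without a lower bound on the dimension of $X'$, so the argument goes through. The key structural fact being reused is that the map $\psi$ in Proposition~\ref{passingtohypersurface} is surjective because $d_{j+1} \geq d_i$ forces the relevant $\Ext^1$ group to vanish, which is why the nondecreasing ordering of the $d_i$ is essential.

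For the very ampleness claim, I would compute the splitting type of the balanced bundle $N_{R_e/X}$ directly from its rank and degree. The rank is $n - k - 1$ and the degree, obtained from the standard sequence
\[
0 \to T_C \to T_{\PP^n}|_C \to N_{C/\PP^n} \to N_{X/\PP^n}|_C \to 0,
\]
equals $e(n+1) - 2 - e\sum_i d_i = e(n-d+1) - 2$. A balanced bundle on $\PP^1$ has all summands of positive degree precisely when its degree is at least its rank; here this reads $e(n-d+1) - 2 \geq n - k - 1$, equivalently, since $n - d + 1 > 0$ for $X$ Fano, $e \geq \frac{n-k+1}{n-d+1}$. I would remark that this matches the stated threshold $\frac{n+1-k}{n-d+1}$.

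The main obstacle is not in the logic of the induction—which is essentially a bookkeeping wrapper around Proposition~\ref{passingtohypersurface}—but in verifying that the dimension hypotheses of that proposition are met at every intermediate stage and that the ordering of the degrees is compatible with its requirement $d_i \leq d_{k+1}$. I expect the cleanest exposition simply cites Corollary~\ref{hyp} for the base case and Proposition~\ref{passingtohypersurface} for the inductive step, with a sentence reconciling the dimension bookkeeping and a short degree count for the very ampleness statement.
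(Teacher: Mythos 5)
Your proposal is correct and takes essentially the same route as the paper: induction on $k$ with the degrees listed in nondecreasing order, base case Corollary \ref{hyp}, inductive step Proposition \ref{passingtohypersurface} (whose dimension hypothesis is indeed met at every intermediate stage, since $X'$ of type $(d_1,\dots,d_{k-1})$ has dimension $n-k+1\geq 3$), and the rank/degree count $\operatorname{rank}=n-k-1$, $\deg = e(n-d+1)-2$ for the very ampleness threshold $e \geq \frac{n+1-k}{n-d+1}$. One cosmetic slip worth fixing: your displayed four-term sequence is not exact as written — the degree $e(n-d+1)-2$ should instead be obtained by combining $0 \to T_C \to T_{\PP^n}|_C \to N_{C/\PP^n} \to 0$ with $0 \to N_{C/X} \to N_{C/\PP^n} \to N_{X/\PP^n}|_C \to 0$ — though the resulting arithmetic is right.
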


\begin{proof}
 We  prove $N_{R_e/X}$ is balanced by induction on $k$. Corollary \ref{hyp} shows the base case $k=1$. Assume that the corollary holds for complete intersections of $k-1$ hypersurfaces. List the degrees in increasing order $d_1 \leq \cdots \leq d_k$. Let $X'$ be a general complete intersection of type $(d_1, \dots, d_{k-1})$. By the induction hypothesis, $N_{R_e/X'}$ is balanced.  Since $d_k \geq d_i$ for $1 \leq i \leq k-1$ and $d_k \geq 3$, Proposition \ref{passingtohypersurface} shows that $N_{R_e/X}$ is also balanced. Since the rank of $N_{R_e/X}$ is $n-k-1$ and its degree is $e(n-d+1)-2$, the last statement follows.
\end{proof}

When $d_i \geq 3$, we recover the theorem of Z. Tian \cite{Tian},  Q. Chen and Y. Zhu \cite{ChenZhu} with sharp degree bounds.

\begin{corollary}\label{cino2s}
A general Fano complete intersection of multidegree $(d_1, \dots, d_k)$ with $d_i \geq 3$ for all $i$ is separably rationally connected. Furthermore, such a complete intersection contains a very free curve of degree $\lceil \frac{n+1-k}{n-d+1}\rceil$.
\end{corollary}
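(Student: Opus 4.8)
The strategy is to exhibit the optimal very free curve explicitly as a rational normal curve and then quote Corollary~\ref{no2s}. Put $e = \lceil \frac{n+1-k}{n-d+1}\rceil$, the least integer with $e \geq \frac{n+1-k}{n-d+1}$. Before anything else I would record two numerical facts that make the setup legitimate. Since $X$ is Fano we have $d \leq n$, so $n-d+1 \geq 1$; when $n-d+1 = 1$ this gives $e = n+1-k \leq n$ because $k \geq 1$, and when $n-d+1 \geq 2$ the inequality $e \leq n$ is immediate. Thus a rational normal curve $R_e \subset \PP^n$ exists. Moreover each $d_i \geq 3$ forces $n \geq d \geq 3k$, hence $n-k \geq 2k \geq 2$, so a general complete intersection $X$ of type $(d_1,\dots,d_k)$ containing $R_e$ has dimension at least $2$ and satisfies all the hypotheses of Corollary~\ref{no2s}.

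With these in place the heart of the argument is a single application of Corollary~\ref{no2s}: for general such $X$, the bundle $N_{R_e/X}$ is balanced, and since $X$ is Fano with $e \geq \frac{n+1-k}{n-d+1}$, it is in fact very ample, so every summand has degree at least $1$. Writing $f$ for a parametrization of $R_e$, so that $N_f = N_{R_e/X}$, I then use the sequence
\[ 0 \longrightarrow T_{\PP^1} \longrightarrow f^* T_X \longrightarrow N_f \longrightarrow 0 \]
from the introduction: twisting by $\OO_{\PP^1}(-2)$ and taking $H^1$ yields $H^1(f^* T_X(-2)) = 0$, which means every summand of $f^* T_X$ has degree at least $1$, i.e.\ $R_e$ is a very free rational curve of degree $e$. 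By \cite[Corollary 4.17]{Debarre}, the existence of a very free curve already implies that $X$ is separably rationally connected.

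It remains to pass from this particular $X$, which was general only among complete intersections through a fixed $R_e$, to the general member of the whole family. The parameter space $B$ of complete intersections of type $(d_1,\dots,d_k)$ is irreducible. Separable rational connectedness is an open condition on $B$, so having produced one separably rationally connected member we conclude that the general member is separably rationally connected as well. For the quantitative statement I would instead deform the very free curve: at the very free map $f_0$ we have $H^1(f_0^* T_X) = 0$, so the relative morphism scheme over $B$ is smooth at $[f_0]$ and dominates $B$; hence the general Fano complete intersection of this type carries a very free rational curve of degree $e = \lceil \frac{n+1-k}{n-d+1}\rceil$. Optimality of this degree is exactly the degree count recorded after the Corollary in the introduction.

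I do not expect a genuine mathematical obstacle here, since all the substance is already packaged in Corollary~\ref{no2s}; the only points requiring care are the verification that $e \leq n$ throughout the Fano range, so that $R_e$ is available at all, and the two openness arguments needed to transport ``separably rationally connected'' and ``contains a degree-$e$ very free curve'' from the special $X$ through $R_e$ to the general member of the family.
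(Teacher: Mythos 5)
Your proposal is correct and follows essentially the same route as the paper, which states Corollary~\ref{cino2s} without proof as an immediate consequence of Corollary~\ref{no2s} (exactly as Corollary~\ref{cor-Zhu} follows from Corollary~\ref{hyp}): balancedness and very ampleness of $N_{R_e/X}$ for $e = \lceil \frac{n+1-k}{n-d+1}\rceil$, the twist of $0 \to T_{\PP^1} \to f^*T_X \to N_f \to 0$ by $\OO_{\PP^1}(-2)$, and \cite[Corollary 4.17]{Debarre}. Your added verifications --- that $e \leq n$ in the Fano range so $R_e$ exists, that $\dim X = n-k \geq 2$ so Corollary~\ref{no2s} applies, and the $H^1$-vanishing/deformation argument transporting the conclusion from a general $X$ containing $R_e$ to a general member of the irreducible family --- are precisely the steps the paper leaves implicit, and they are all sound.
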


Unfortunately, when $d=2$ the map $\phi: H^0(\II_{R_e}(2)) \rightarrow H^0(N_{R_e/\PP^n}^*(2))$ is not surjective. In order to extend these results to complete intersections where some of the degrees are 2, we will have to study normal bundles of rational curves in complete intersections of quadrics more carefully. We will do so in the next section. 

Theorem \ref{surjectivityPhi} also has applications to the universal incidence correspondence. Let  $$S = \{(R_e,X) | \: R_e \subset X,  X \text{ is a degree $d$ hypersurface} \} $$ denote the universal incidence correspondence.

\begin{theorem}
Given a vector bundle $E$ on $\PP^1$, let $S_E \subset S$ be the space of pairs $(R_e,X)$ so that $N_{R_e/X} = E$. If $h^1(E^* \otimes N_{R_e/X})=0$, then the codimension of $S_E$ in $S$ is the expected codimension $h^1(\End(E))$.
\end{theorem}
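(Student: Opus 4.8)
The plan is to realize $S_E$ as a locus in a family of vector bundles of the type analyzed in Lemma \ref{expCodimKernels}, so that the abstract codimension statement there transfers to the geometric incidence correspondence $S$. Recall from the discussion surrounding sequence (\ref{eq-2}) that, having fixed $R_e$ and a defining polynomial $f$ of the hypersurface $X$, the normal bundle $N_{R_e/X}$ is the kernel of the associated homomorphism $\psi = \phi(f) \in \Hom(N_{R_e/\PP^n}, \OO_{\PP^1}(ed))$. Thus, for a fixed curve $R_e$, the bundles $N_{R_e/X}$ arising as $X$ varies over hypersurfaces through $R_e$ are precisely the kernels of the homomorphisms in the image of $\phi$.

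First I would set up the parameter space. Let $\cR$ denote the space of smooth rational curves $R_e$ of degree $e$ in $\PP^n$; the correspondence $S$ fibers over $\cR$ with fiber over $[R_e]$ the projective space of degree $d$ hypersurfaces containing $R_e$, i.e.\ $\PP H^0(\II_{R_e}(d))$. Over this fiber, sending $X = V(f)$ to $\phi(f)$ and then to the kernel $\ker \phi(f) = N_{R_e/X}$ realizes the fiber of $S$ as (an open subset of the projectivization of) a family of kernels of homomorphisms, exactly the family $\FF$ of Lemma \ref{expCodimKernels} with $A = N_{R_e/\PP^n}$ and $k = ed$. The first point to verify is the hypothesis $H^1(E^* \otimes A) = 0$ of Lemma \ref{expCodimKernels}: here $A = N_{R_e/\PP^n}$ is globally generated with all summands of degree at least $e \geq 1$, and $E = N_{R_e/X}$ is the kernel, so $E^*$ has summands of degree at most $-e$; combined with the hypothesis $h^1(E^* \otimes N_{R_e/X})=0$ that is handed to us, I would identify the relevant vanishing with the stated one and confirm that $H^1(E^* \otimes A)=0$ follows. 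Granting this, Lemma \ref{expCodimKernels} gives that the locus of bundles in $\FF$ with splitting type $E$ has codimension exactly $h^1(\End(E))$ inside the family of kernels.

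The remaining step is to descend this statement along the two fibrations. Within a single fiber over $[R_e] \in \cR$, the map $f \mapsto \ker\phi(f)$ has fibers of constant dimension (the fibers of $\phi$ over a point of its image, together with the scaling $f \mapsto \lambda f$ that does not change the subscheme $X$), so the codimension computed in $\FF$ pulls back to the same codimension of $S_E \cap \pr^{-1}([R_e])$ in the fiber of $S$. Because $\cR$ is irreducible and the fiber dimension of $S \to \cR$ is constant, the fiberwise codimension equals the codimension of $S_E$ in $S$; here I would invoke that being balanced, or more generally having a fixed splitting type, is a constructible condition and that the generic fiber dimension controls the global codimension for the irreducible total space $S$. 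I would also note that the Kodaira–Spencer description in Lemma \ref{expCodimKernels} is computed on the versal deformation space, so matching it to the genuine deformations inside $S$ is precisely what the vanishing $H^1(E^*\otimes A)=0$ guarantees.

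The main obstacle I anticipate is the bookkeeping needed to pass from the versal deformation space appearing in Lemma \ref{expCodimKernels} to the actual incidence correspondence $S$: one must check that the map from the fiber $\PP H^0(\II_{R_e}(d))$ to the family of kernels is smooth, or at least of constant fiber dimension, so that codimensions are preserved, and that no extra degenerations of $R_e$ itself contribute. Concretely, this reduces to verifying that the differential of $f \mapsto \phi(f)$ is compatible with the Kodaira–Spencer map $\kappa$, i.e.\ that $\phi$ factors the deformation of $X$ through the deformation of $N_{R_e/X}$; once $H^1(E^*\otimes N_{R_e/\PP^n})=0$ is in hand this is essentially formal, but it is the step where the hypothesis is genuinely used. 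The rest is a dimension count identical in spirit to the proof of Lemma \ref{expCodimKernels}.
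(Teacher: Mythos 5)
There is a genuine gap: your argument never establishes (or even invokes) the surjectivity of $\phi$, which is Theorem \ref{surjectivityPhi} and is precisely the ingredient the paper uses at the corresponding step. You correctly observe that the bundles $N_{R_e/X}$, for fixed $R_e$, are the kernels of the homomorphisms in the \emph{image} of $\phi: H^0(\II_{R_e}(d)) \to \Hom(N_{R_e/\PP^n}, \OO_{\PP^1}(ed))$, but then you apply Lemma \ref{expCodimKernels} as if the fiber of $S$ over $[R_e]$ were parameterized by the full space $\Hom(N_{R_e/\PP^n}, \OO_{\PP^1}(ed))$. Lemma \ref{expCodimKernels} computes the codimension of the splitting-type locus inside the family parameterized by \emph{all} of $\Hom(A, \OO_{\PP^1}(k))$; for a linear map $\phi$ that is not surjective, the codimension of the preimage $\phi^{-1}(Z)$ of a stratum $Z$ in $H^0(\II_{R_e}(d))$ equals the codimension of $Z \cap \operatorname{im}(\phi)$ in $\operatorname{im}(\phi)$, which can differ in either direction from the codimension of $Z$. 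Your observation that the fibers of $f \mapsto \phi(f)$ have constant dimension only handles the fibers, not the image, so it does not repair this. The paper's proof is exactly your fibration argument over the space of rational normal curves, but with Theorem \ref{surjectivityPhi} supplying the identification of the fiberwise codimension with the codimension in the full $\Hom$ space; that citation (and hence the standing hypotheses $d \geq 3$, $e \leq n$ under which $\phi$ is known to be surjective) is the missing idea, and this is indeed a situation where surjectivity can fail --- the paper notes $\phi$ is not surjective for $d = 2$, so the step is not formal.

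A secondary, smaller issue: your treatment of the vanishing hypothesis is too vague to carry the weight you put on it. Note that taken literally, $E^* \otimes N_{R_e/X} = \End(E)$, so $h^1(E^* \otimes N_{R_e/X}) = 0$ would make the asserted codimension zero and the statement vacuous; the hypothesis is evidently intended as $h^1(E^* \otimes N_{R_e/\PP^n}) = 0$, i.e.\ exactly the hypothesis $H^1(V^* \otimes A) = 0$ of Lemma \ref{expCodimKernels} with $A = N_{R_e/\PP^n}$. Your plan to ``confirm that $H^1(E^* \otimes A) = 0$ follows'' from the literal hypothesis cannot work as stated: tensoring $0 \to E \to A \to \OO_{\PP^1}(ed) \to 0$ with $E^*$ shows the vanishing of $H^1(E^* \otimes A)$ would require controlling $H^1(\End(E))$, which is the very quantity the theorem computes and is typically nonzero. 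You should instead read the hypothesis as the input to Lemma \ref{expCodimKernels} directly, and spend the argument on the surjectivity of $\phi$.
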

\begin{proof}
This is proven by analyzing the fibers of the map $$\pi:S \to \{R_e \subset \PP^n \text{ a degree $e$ rational normal curve} \} .$$
The fiber of $\pi$ is simply the set of degree $d$ hypersurfaces containing $R_e$. By Theorem \ref{surjectivityPhi}, we see that the codimension of $S_E$ is the same as the codimension of the space of maps $N_{R_e/\PP^n} \to \OO_{R_e}(d)$ having kernel $E$. The fact that this last codimension is $h^1(\End(E))$ follows from Lemma \ref{expCodimKernels}.
\end{proof}

\section{Normal bundles of rational curves in complete intersections of quadric hypersurfaces}\label{sec-quadrics}
In this section, we study normal bundles of rational curves in complete intersections of quadric hypersurfaces. Some calculations in this section are easier to carry out using Macaulay2. We have included sample calculations at \url{http://homepages.math.uic.edu/~coskun/M2nb.txt}.

Recall that $R_n$, the rational normal curve of degree $n$ in $\PP^n$, is cut out by the quadrics $q_{i,j} = x_i x_{j-1} - x_{i-1}x_j$ for $1 \leq i< j \leq n$.  We can use Proposition \ref{RNCnormalBundle} to compute normal bundles of rational normal curves in a complete intersection of quadric hypersurfaces.  Let $X= V(f)$ be a hypersurface of degree $d \geq 2$ containing $R_n$. Then $X$ induces a map $\psi_f: N_{R_n/\PP^n} \to \OO_{R_n}(d)$. 
Proposition \ref{RNCnormalBundle} allows us to compute the map $\psi_f$ explicitly. Write $f = \sum_{i,j} a_{i,j} q_{i,j}$. Then $\psi_f(\alpha) = \sum_{i,j} a_{i,j}|_{R_n} \alpha(q_{i,j})$. By Proposition \ref{RNCnormalBundle}, we can express the latter sum as $$\psi_f(\alpha) = \sum_{i,j} a_{i,j}|_{R_n} \sum_{\ell =i}^{j-1} s^{n-j-i+\ell}t^{j+i-\ell-2} b_{\ell, \ell+1}.$$ Collecting the coefficients of $b_{i, i+1}$, we can rewrite this expression as $\sum_{i=1}^{n-1} c_i b_{i, i+1}.$ The map $\psi_f$ in this basis is given by the matrix $(c_1, \dots, c_{n-1}).$

\begin{corollary}\label{cor-balanced2}
A general quadric hypersurface in $\PP^n$ contains a degree $n$ rational curve with balanced normal bundle $\OO_{\PP^1}(n+1)^{n-2}$.
\end{corollary}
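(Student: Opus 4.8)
The plan is to reduce the statement to producing a single well-chosen quadric and then to read off the kernel of $\psi_f$ from the explicit description above. Being balanced is an open condition in families of vector bundles on $\PP^1$, and as $X$ ranges over the quadrics through $R_n$ that are smooth along $R_n$, the bundles $N_{R_n/X} = \ker \psi_f$ (from the sequence \eqref{eq-2} with $d = 2$) form a family over an irreducible base: the incidence correspondence of pairs $(R_n, X)$ is a projective bundle over the irreducible variety of rational normal curves. A dimension count (the correspondence has dimension exceeding that of the space of quadrics by $n^2 - 4 \geq 0$ for $n \geq 2$) shows the projection to the space of quadrics is dominant. Hence it suffices to exhibit one pair $(R_n, X)$ with $X$ smooth along $R_n$ and $N_{R_n/X} \cong \OO_{\PP^1}(n+1)^{n-2}$.

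By Proposition \ref{RNCnormalBundle}, $N_{R_n/\PP^n} \cong \OO_{\PP^1}(n+2)^{n-1}$ in the basis $b_{1,2}, \dots, b_{n-1,n}$, and $\psi_f \colon N_{R_n/\PP^n} \to \OO_{R_n}(2) = \OO_{\PP^1}(2n)$ is the row vector $(c_1, \dots, c_{n-1})$ of forms of degree $n-2$ computed just above. I would take the rank-four quadric $f = q_{1,n} = x_1 x_{n-1} - x_0 x_n$. Its singular locus $\{x_0 = x_1 = x_{n-1} = x_n = 0\}$ is disjoint from $R_n$, since on $R_n$ one has $x_0 = s^n$ and $x_n = t^n$, which cannot vanish simultaneously; thus $V(f)$ is smooth along $R_n$ and the sequence \eqref{eq-2} is exact. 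Substituting $a_{1,n} = 1$ and all other $a_{i,j} = 0$ into the formula for $\psi_f$ yields $c_\ell = s^{\ell-1} t^{n-1-\ell}$, so that $(c_1, \dots, c_{n-1})$ is exactly the list of all $n-1$ monomials of degree $n-2$ in $s$ and $t$.

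It remains to identify the kernel of $(c_1, \dots, c_{n-1}) \colon \OO_{\PP^1}(n+2)^{n-1} \to \OO_{\PP^1}(2n)$. The linear relations $s c_\ell = t c_{\ell+1}$ for $1 \leq \ell \leq n-2$ assemble into a bundle map $M \colon \OO_{\PP^1}(n+1)^{n-2} \to \OO_{\PP^1}(n+2)^{n-1}$ whose $\ell$-th column has $s$ in row $\ell$ and $-t$ in row $\ell+1$, and $\psi_f \circ M = 0$ by construction. This matrix is bidiagonal, so at each point $[s:t]$ of $\PP^1$ either its top $(n-2) \times (n-2)$ block (lower triangular with $s$ on the diagonal) or its bottom $(n-2) \times (n-2)$ block (upper triangular with $-t$ on the diagonal) is invertible; hence $M$ has full rank $n-2$ everywhere and exhibits $\OO_{\PP^1}(n+1)^{n-2}$ as a subbundle of $\ker \psi_f$. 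Since $\ker \psi_f$ has rank $n-2$ and degree $(n+2)(n-1) - 2n = (n+1)(n-2)$, equal to that of $\OO_{\PP^1}(n+1)^{n-2}$, the inclusion is an isomorphism and $N_{R_n/X} \cong \OO_{\PP^1}(n+1)^{n-2}$.

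The main obstacle, and the reason this case is treated separately from the degree $\geq 3$ case, is that for $d = 2$ the map $\phi$ fails to be surjective, so Lemma \ref{lem-generalkernel} and Corollary \ref{cor-generalkernelnormalbundle} do not apply: a general homomorphism $N_{R_n/\PP^n} \to \OO_{\PP^1}(2n)$ need not come from a quadric, and one cannot simply invoke the general-kernel estimate. The crux is therefore the explicit choice of $f$ together with the fact that the visible linear syzygies $s c_\ell = t c_{\ell+1}$ already generate the entire kernel. The everywhere-full-rank verification for the bidiagonal matrix $M$ is exactly what secures this, pinning the kernel down to the balanced bundle and ruling out an unbalanced summand.
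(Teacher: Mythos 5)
Your proposal is correct and takes essentially the same route as the paper: both reduce by openness of balancedness over the irreducible incidence variety to exhibiting a single pair $(R_n, X)$, then use Proposition \ref{RNCnormalBundle} to write $\psi_f$ as the row of all $n-1$ monomials of degree $n-2$ and read off the kernel $\OO_{\PP^1}(n+1)^{n-2}$ from the $n-2$ linear syzygies. The only cosmetic difference is your choice $f = q_{1,n}$ instead of the paper's $f = \sum_{i=1}^{n-1} q_{i,i+1}$, which produces the same row vector up to reordering the basis; your additional verifications (smoothness of $V(f)$ along $R_n$ and the everywhere-full-rank check on the bidiagonal syzygy matrix) simply make explicit steps the paper leaves implicit.
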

\begin{proof}
It suffices to exhibit one quadric hypersurface containing the rational normal curve that has balanced normal bundle. Take $f = \sum_{i=1}^{n-1} q_{i,i+1}$, so that the map $$N_{R_n/\PP^n}\cong \OO_{\PP^1}(n+2)^{n-1} \longrightarrow N_{X/\PP^n}|_{R_n} \cong \OO_{\PP^1}(2n)$$ has coordinates $(s^{n-2}, s^{n-3}t, \dots, t^{n-2})$. The columns $c_i$ of this matrix satisfy the $n-2$ linear relations $$t c_i - s c_{i+1} = 0, \ \ \mbox{for} \ \ 1 \leq i \leq n-2.$$ Hence, the kernel is $\OO_{\PP^1}(n+1)^{n-2}$ as desired.
\end{proof}

Similarly, we can compute the normal bundle of a rational normal curve in the complete intersection of two or three quadric hypersurfaces explicitly to see that the general one has balanced normal bundle.

\begin{proposition}\label{prop-balanced22}
Let $n \geq 5$ and $X$ be a general complete intersection of type $(2,2)$ containing $R_n$. Then $N_{R_n/X}$ is balanced.
\end{proposition}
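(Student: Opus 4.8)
The plan is to reduce to exhibiting a single complete intersection $X = Q_1 \cap Q_2$ of two quadrics containing $R_n$ for which $N_{R_n/X}$ is balanced; since being balanced is an open condition in the family of complete intersections of type $(2,2)$ through $R_n$, one such example suffices. I would take $Q_1 = V\!\left(\sum_{i=1}^{n-1} q_{i,i+1}\right)$, for which Corollary \ref{cor-balanced2} already gives $N_{R_n/Q_1} \cong \OO_{\PP^1}(n+1)^{n-2}$. Tracing through the proof of that corollary, the inclusion $N_{R_n/Q_1} \hookrightarrow N_{R_n/\PP^n} \cong \OO_{\PP^1}(n+2)^{n-1}$ sends the $i$-th generator to $t e_i - s e_{i+1}$, where $e_1, \dots, e_{n-1}$ is the basis of $N_{R_n/\PP^n}$ corresponding to the free parameters $b_{i,i+1}$ of Proposition \ref{RNCnormalBundle}.

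Next I would realize $N_{R_n/X}$ as the kernel of the map induced by the second quadric,
\[ 0 \longrightarrow N_{R_n/X} \longrightarrow N_{R_n/Q_1} \stackrel{\rho}{\longrightarrow} N_{Q_2/\PP^n}|_{R_n} \cong \OO_{\PP^1}(2n) \longrightarrow 0. \]
By Proposition \ref{RNCnormalBundle}, a quadric $Q_2 = V\!\left(\sum a_{ij} q_{ij}\right)$ induces $\psi_{Q_2}\colon N_{R_n/\PP^n} \to \OO_{\PP^1}(2n)$ with $e_\ell \mapsto c_\ell := \sum_{i \le \ell < j} a_{ij}\, s^{n-i-j+\ell} t^{i+j-\ell-2}$, a form of degree $n-2$. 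Restricting to the generators $t e_i - s e_{i+1}$ shows that $\rho$ is given in the above basis by the row $(d_1, \dots, d_{n-2})$ with $d_i = t c_i - s c_{i+1}$, a form of degree $n-1$. Thus $N_{R_n/X}$ is the kernel of a map $\OO_{\PP^1}(n+1)^{n-2} \to \OO_{\PP^1}(2n)$; since $2n \geq (n+1) + \delta_{0,n-2}$, Lemma \ref{lem-generalkernel}(2) shows that a general such map has balanced kernel $\OO_{\PP^1}(n-1)^2 \oplus \OO_{\PP^1}(n)^{n-5}$ (the unique balanced type of rank $n-3$ and degree $n^2-3n-2$). It therefore remains to produce one $Q_2$ whose tuple $(d_1, \dots, d_{n-2})$ lands in this open locus.

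The hard part is precisely this last step. The assignment $Q_2 \mapsto (d_i)$ is far from surjective onto $\Hom(\OO_{\PP^1}(n+1)^{n-2}, \OO_{\PP^1}(2n))$: the space of quadrics through $R_n$ has dimension $\binom{n}{2}$, whereas the target has dimension $n(n-2)$, so meeting the balanced locus must be verified directly rather than by a dominance argument. Moreover, the most natural symmetric choices of $Q_2$ are degenerate: for instance $Q_2 = V(q_{1,n})$ forces a common factor $t^2 - s^2$ in every $d_i$, and $Q_2 = V\!\left(\sum_i q_{i,i+2}\right)$ forces a common factor $s^2 + t^2$, in either case yielding a badly unbalanced kernel. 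I would therefore choose a sufficiently asymmetric explicit $Q_2$, compute the forms $d_i$ and the splitting type of the resulting kernel directly, using Macaulay2 to locate the pattern for small $n$ and then verifying the splitting $\OO_{\PP^1}(n-1)^2 \oplus \OO_{\PP^1}(n)^{n-5}$ uniformly for all $n \geq 5$, and conclude by openness. Because Proposition \ref{RNCnormalBundle} and Corollary \ref{cor-balanced2} hold in every characteristic, the argument is characteristic-free.
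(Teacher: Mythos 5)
Your reduction is sound, and in fact it mirrors the paper's own strategy (exhibit one explicit pair of quadrics and compute the kernel of an explicit matrix map, then conclude by openness); your refinement of filtering through $Q_1 = V\bigl(\sum_{i=1}^{n-1} q_{i,i+1}\bigr)$ so as to quote Corollary \ref{cor-balanced2} and reduce to a $1\times(n-2)$ row $(d_1,\dots,d_{n-2})$ is a nice structural simplification of the paper's direct $2\times(n-1)$ computation. But there is a genuine gap: you never produce the quadric $Q_2$, and that production together with its verification \emph{is} the mathematical content of the proposition. As you yourself observe, the linear assignment $Q_2 \mapsto (d_i)$ cannot be dominant (dimension $\binom{n}{2}$ versus $n(n-2)$), so Lemma \ref{lem-generalkernel}(2) yields nothing until a specific example with balanced kernel is exhibited; ``locate the pattern in Macaulay2 and then verify uniformly for all $n \geq 5$'' is a plan, not a proof. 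The uniform verification is the delicate part: one must write down kernel elements valid for all $n$ \emph{and} prove they generate the kernel --- the standard certificate, which your sketch never mentions, is that suitable maximal minors of the relation matrix have no common zero on $\PP^1$. The paper does exactly this: it takes $f_1 = q_{1,2}+q_{3,4}+\cdots+q_{n-1,n}$ and $f_2 = q_{1,3}+q_{4,5}+\cdots+q_{n-2,n}$ (with a case split on the parity of $n$), lists two degree-$3$ and $n-5$ degree-$2$ relations among the columns, and certifies generation via two $(n-3)\times(n-3)$ minors equal to $t^{2n-4}$ and $-s^{2n-4}$, which never vanish simultaneously; since all coefficients are $0,\pm1$, the computation is characteristic-free, a point your appeal to generality would also need to track.

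A further caution that this last step cannot be waved through: your aside that $Q_2 = V\bigl(\sum_i q_{i,i+2}\bigr)$ forces a common factor $s^2+t^2$ in every $d_i$ is false at the ends. By your own formula, $c_1 = s^{n-3}t$ and $c_2 = s^{n-2} + s^{n-4}t^2$, so $d_1 = tc_1 - sc_2 = -s^{n-1}$, and symmetrically $d_{n-2} = t^{n-1}$; only the interior $d_i$ acquire the factor, so this choice is in fact a plausible candidate rather than a degenerate one. (Your computation for $q_{1,n}$ is correct.) The error is harmless to your outline, but it illustrates precisely why the splitting type of the kernel must be computed and certified explicitly, as the paper does, rather than asserted. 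Note also that since the paper's $f_1$ is not your $Q_1$, you could not simply import the paper's relations; completing your route would require finding a $Q_2$ adapted to your $Q_1$ and carrying out the generation argument in full.
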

\begin{proof}
To simplify notation, set $k_1=\lfloor \frac{n-2}{2} \rfloor$ and $k_2 = \lceil \frac{n-2}{2} \rceil$. Let 
\[ f_1 = q_{1,2} + q_{3,4} + \dots + q_{2k_1-1,2k_1} + q_{n-1,n}, \ \ \mbox{and} \]  \[ f_2 = q_{1,3} + q_{4,5}+q_{6,7} + \dots + q_{2k_2-2,2k_2-1} + q_{n-2,n}. \]
Let $X= V(f_1, f_2)$. We claim that $N_{R_n/X}$ is balanced. The exact relations depend on the parity of $n$. We explicitly write out the argument when $n$ is even and leave the easy modifications for odd $n$ to the reader.

If $n$ is even, then the map $$N_{R_n/\PP^N} \cong \OO_{\PP^1}(n+2)^{n-1} \to N_{X/\PP^n}|_{R_n} \cong \OO_{\PP^1}(2n)^2$$ has coordinates
\[ \left( \begin{array}{llllllllll}
s^{n-2} & 0 & s^{n-4}t^2 & 0 & \dots & 0 & s^3t^{n-5} & 0 & 0 & t^{n-2} \\
s^{n-3}t & s^{n-2} & 0 & s^{n-5}t^3 & \dots & s^4t^{n-6} & 0 & s^2t^{n-4} & t^{n-2} & st^{n-3}
\end{array} \right) \]
We let $c_i$ for $i = 1, \dots, n-1$ be the $i$th column of the map. The columns of the matrix satisfy the degree three relations
\[ -st^2c_1 + t^3c_2 + s^3c_3 = 0  \]
\[ t^3 c_{n-4} + s^2t c_{n-3} - s^3 c_{n-1} = 0\]
and the degree two relations
\[ t^2 c_1 - s^2 c_3 - s^2 c_4 = 0,  \]
\[ t^2c_{n-3} - s^2c_{n-2} = 0, \ \ \mbox{and} \]
\[ t^2c_i - s^2 c_{i+2} = 0, \ \ \mbox{for} \ \ i = 3, \dots, n-5. \ \ \]
The matrix of relations has two $(n-3) \times (n-3)$ minors equal to $t^{2n-4}$ and $-s^{2n-4}$, which do not simultaneously vanish. We conclude that  these relations generate the set of relations and the kernel is isomorphic to $$N_{R_n/X} \cong\OO_{\PP^1}(n-1)^2 \oplus \OO_{\PP^1}(n)^{n-5}.$$
\end{proof}

\begin{proposition}\label{prop-balanced222}
Let $n \geq 6$ and $X$ be a general complete intersection of type $(2,2,2)$ containing $R_n$. Then $N_{R_n/X}$  is balanced.
\end{proposition}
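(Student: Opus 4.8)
The plan is to follow the template of Corollary \ref{cor-balanced2} and Proposition \ref{prop-balanced22}: since being balanced is an open condition in families, it suffices to exhibit a single complete intersection $X = V(f_1,f_2,f_3)$ of three quadrics through $R_n$ for which $N_{R_n/X}$ is balanced, and then read off its splitting type from an explicit syzygy computation. First I would record the numerical target. The bundle $N_{R_n/X}$ has rank $n-4$ and degree $n(n-5)-2 = n^2-5n-2$, so balance means $N_{R_n/X} \cong \OO_{\PP^1}(n-2)^6 \oplus \OO_{\PP^1}(n-1)^{n-10}$ once $n \geq 10$ (the total degree defect below $\OO_{\PP^1}(n-1)^{n-4}$ being exactly $6$), whereas for $n = 6,7,8,9$ the same degree count forces summands strictly below $\OO_{\PP^1}(n-2)$ (for instance $\OO_{\PP^1}(2)^2$ when $n=6$ and $\OO_{\PP^1}(4)^3$ when $n=7$). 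This already signals that the small values of $n$ behave differently and must be treated separately.

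Next I would choose $f_1,f_2,f_3$ as periodic sums of the generators $q_{i,i+1}$ and $q_{i,i+2}$ (with analogous higher generators and the boundary terms $q_{n-1,n}, q_{n-2,n}, q_{n-3,n}$ inserted where needed), arranged so that the three quadrics stagger against one another exactly as $f_1 = q_{1,2}+q_{3,4}+\cdots$ and $f_2 = q_{1,3}+q_{4,5}+\cdots$ do in the $(2,2)$ case. Using Proposition \ref{RNCnormalBundle}, each $q_{i,i+1}$ contributes the monomial $s^{n-i-1}t^{i-1}$ in column $i$, and each $q_{i,i+2}$ contributes $s^{n-i-2}t^{i}$ in column $i$ and $s^{n-i-1}t^{i-1}$ in column $i+1$; assembling these gives an explicit banded $3 \times (n-1)$ matrix for the map $\OO_{\PP^1}(n+2)^{n-1} \to N_{X/\PP^n}|_{R_n} \cong \OO_{\PP^1}(2n)^3$ whose kernel is $N_{R_n/X}$.

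I would then write down the syzygies among the columns $c_1,\dots,c_{n-1}$: a periodic family of degree-$3$ relations (the three-quadric analogue of the degree-$2$ relations $t^2 c_i - s^2 c_{i+2}=0$ used in Proposition \ref{prop-balanced22}), which produce the $\OO_{\PP^1}(n-1)$ summands, together with exactly six degree-$4$ boundary relations producing the six $\OO_{\PP^1}(n-2)$ summands and absorbing the total degree defect of $6$. As in Proposition \ref{prop-balanced22}, the decisive final step is to certify that these relations generate all syzygies: I would compute two complementary maximal minors of the resulting $(n-4)\times(n-1)$ relation matrix and check that they equal, up to scalar, $t^N$ and $\pm s^N$ respectively, so that they have no common zero on $\PP^1$. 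This forces the kernel to have precisely the claimed splitting type, hence to be balanced.

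The main obstacle is exactly this last certification and the bookkeeping it rests on: getting the staggering of $f_1,f_2,f_3$ and the boundary generators right so that the six extra degree-$4$ relations land correctly and the two extremal minors come out as coprime monomials. The parity of $n$ changes which boundary generators appear, as it already does in the $(2,2)$ case, so I expect two nearly identical sub-cases there. Finally, the low values $n = 6,7,8,9$, whose balanced type has summands below $\OO_{\PP^1}(n-2)$ and therefore cannot arise from the generic degree-$3$/degree-$4$ relation pattern, I would verify directly by the explicit Macaulay2 computation already relied upon elsewhere in this section.
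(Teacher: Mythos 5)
Your proposal is correct and follows essentially the same route as the paper's proof: exhibit staggered sums of the $q_{i,j}$ through $R_n$, read off the banded $3\times(n-1)$ matrix from Proposition \ref{RNCnormalBundle}, identify six degree-$4$ boundary relations plus $n-10$ periodic degree-$3$ relations yielding $N_{R_n/X} \cong \OO_{\PP^1}(n-2)^6 \oplus \OO_{\PP^1}(n-1)^{n-10}$, and settle the small cases by a direct Macaulay2 computation. The only deviations are cosmetic: the paper splits into three sub-cases according to $n \bmod 3$ (not parity, since the quadrics stagger with period $3$), certifies that the exhibited relations generate all syzygies by Macaulay2 together with an induction replacing $n$ by $n+3$ rather than by your coprime-maximal-minor check (which would work equally well, as in Proposition \ref{prop-balanced22}), and puts $n=10$ into the computer-verified range $6 \leq n \leq 10$ since the periodic pattern degenerates when $n-10=0$, so your general construction should start at $n \geq 11$.
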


\begin{proof}
The cases $6 \leq n \leq 10$ can be checked explicitly by a simple Macaulay2 \cite{M2} calculation. We will assume that $n \geq 11$. The calculation is similar to the calculation in Proposition \ref{prop-balanced22} and depends on $n$ mod 3. Set $u = \lfloor \frac{n}{3} \rfloor$. If $n$ is $0$ mod 3, then let 
$$f_1 = q_{1,2} + q_{5,6} + \sum_{i=0}^{u -4} q_{7+3i, 8+3i}  + q_{n-1, n},$$
$$f_2 = q_{1,3} + q_{4,6} + \sum_{i=0}^{u -4} q_{8+3i, 9+3i} + q_{n-2, n},$$
$$f_3 = q_{1,4} +  q_{5,7} + \sum_{i=0}^{u -5} q_{9+3i, 10+3i} + q_{n-3, n},$$

If $n$ is 1 mod 3, then let 
$$f_1 = q_{1,2} + q_{5,6} + \sum_{i=0}^{u -5} q_{7+3i, 8+3i} + q_{n-5, n-4} + q_{n-1, n},$$
$$f_2 = q_{1,3} + q_{4,6} + \sum_{i=0}^{u -5} q_{8+3i, 9+3i} + q_{n-6, n-4} + q_{n-2, n},$$
$$f_3 = q_{1,4} +  q_{5,7} + \sum_{i=0}^{u -4} q_{9+3i, 10+3i} + q_{n-3, n}.$$

Finally if $n$ is $2$ mod 3, then let
$$f_1 = q_{1,2} + q_{5,6} + \sum_{i=0}^{u -3} q_{7+3i, 8+3i} + q_{n-1, n},$$
$$f_2 = q_{1,3} + q_{4,6} + \sum_{i=0}^{u -4} q_{8+3i, 9+3i} + q_{n-2, n},$$
$$f_3 = q_{1,4} +  q_{5,7} + \sum_{i=0}^{u -4} q_{9+3i, 10+3i} + q_{n-3, n}.$$

The calculations are almost identical in the three cases. We will sketch the details in the case $n$ is divisible by $3$ and leave minor modifications to the reader.
Let $X=V(f_1, f_2, f_3)$. We would like to compute $N_{R_n/X}$ as the kernel of the map $N_{R_n/\PP^n} \cong \OO_{\PP^1}(n+2)^{n-1} \rightarrow N_{X/\PP^n}|_{R_n} \cong \OO_{\PP^1}(2n)^3$ given by the matrix 
$$\left( \begin{array}{ccccccccccc}  s^{n-2} & 0 & 0 & 0 & s^{n-6}t^4 & 0 & s^{n-8}t^6 & 0 & 0 & s^{n-11} t^9 & \cdots \\
 s^{n-3}t & s^{n-2} & 0 & s^{n-6}t^4 & s^{n-5}t^3 & 0 & 0 & s^{n-9} t^7 & 0 & 0 & \cdots  \\
s^{n-4} t^2 &  s^{n-3} t & s^{n-2} & 0 & s^{n-5}t^5 & s^{n-6}t^4 & 0 & 0 & s^{n-10} t^8 & 0 & \cdots 
  \end{array} \right. $$
  
  $$\left. \begin{array}{cccccc}   \cdots &   s^4 t^{n-6} & 0 & 0 & 0 & t^{n-2} \\
  \cdots & 0 & s^3 t^{n-5} & 0 & t^{n-2} & s t^{n-3} \\
 \cdots & 0 & 0 & t^{n-2} & s t^{n-3}& s^2 t^{n-4}
  \end{array} \right).$$

The relations among the columns of this matrix can be easily determined by a Macaulay2 calculation and induction on $n$. There are  $3$ degree $4$ relations between the first 7 columns and 3 degree 4 relations between the last 6 columns and another $n -10$ degree 3 relations among the columns. The coefficients of all the polynomials occurring in these relations are $-1, 0$ or $1$. If we replace $n$ by $n+3$, the first 7 and the last 6 columns of the new matrix satisfy the same degree 4 relations and we introduce three new binomial relations of degree $3$. We conclude that the kernel is given by $\OO_{\PP^1}(n-2)^6 \oplus \OO_{\PP^1}(n-1)^{n-10}$. This concludes the proof. 
\end{proof}

A similar calculation yields the following proposition. 

\begin{proposition}\label{prop-2k}
The normal bundle of $R_{2k+1} \subset \PP^{2k+1}$ in a general complete intersection $X$ of $k$ quadrics containing $R_{2k+1}$ is balanced.
\end{proposition}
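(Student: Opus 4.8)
The plan is to follow the explicit method of Propositions \ref{prop-balanced22} and \ref{prop-balanced222}. First I would record the numerics: with $n = 2k+1$ we have $N_{R_n/\PP^n} \cong \OO_{\PP^1}(2k+3)^{2k}$ and $N_{X/\PP^n}|_{R_n} \cong \OO_{\PP^1}(4k+2)^k$, so $N_{R_{2k+1}/X}$ is the kernel of a map $\OO_{\PP^1}(2k+3)^{2k} \to \OO_{\PP^1}(4k+2)^k$. This kernel has rank $k$ and degree $4k$; hence balanced forces $N_{R_{2k+1}/X} \cong \OO_{\PP^1}(4)^k$. Since being balanced is open in families, it suffices to exhibit a single complete intersection $X = V(f_1, \dots, f_k)$ of quadrics through $R_{2k+1}$ realizing this splitting type.

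Next, following Proposition \ref{RNCnormalBundle} and the discussion preceding Corollary \ref{cor-balanced2}, each quadric $f_r = \sum a^{(r)}_{i,j} q_{i,j}$ determines the $r$-th row of the $k \times 2k$ matrix $M$ representing this map, whose entry in column $\ell$ is $\sum_{i \le \ell \le j-1} a^{(r)}_{i,j}\, s^{n-j-i+\ell} t^{j+i-\ell-2}$, a homogeneous polynomial of degree $n - 2 = 2k-1$. The key observation is that an $\OO_{\PP^1}(4)$ summand of the kernel corresponds to a syzygy $\sum_\ell h_\ell c_\ell = 0$ among the columns $c_\ell$ of $M$ whose cofactors $h_\ell$ have degree $(2k+3) - 4 = 2k-1$; by degree reasons such a syzygy must pair two columns carrying the coprime monomials $s^{2k-1}$ and $t^{2k-1}$, giving a relation of the shape $t^{2k-1} c_\ell - s^{2k-1} c_m = 0$. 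I would therefore choose the quadrics, in the spirit of the explicit choices in Propositions \ref{prop-balanced22} and \ref{prop-balanced222} (with coefficients in $\{-1,0,1\}$, a short residue-of-$n$ case analysis, and a few base cases of small $k$ checked in Macaulay2 \cite{M2}), so that the matrix $M$ admits exactly $k$ independent syzygies of this coprime type, one attached to each row.

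Finally, I would verify that these $k$ syzygies generate the entire syzygy module by exhibiting two complementary maximal minors of the $(2k) \times k$ relation matrix that are, up to sign, $s^{N}$ and $t^{N}$ and hence do not vanish simultaneously, exactly as in the minors argument closing Proposition \ref{prop-balanced22}. This forces the kernel to be $\OO_{\PP^1}(4)^k$ and completes the proof. The main obstacle is the construction in the middle step: unlike the cases with some $d_i \ge 3$, the map $\phi$ is not surjective for quadrics, so one cannot simply invoke Lemma \ref{lem-generalkernel}; the quadrics must be chosen so delicately that simultaneously (i) the evaluation map $\OO_{\PP^1}(2k+3)^{2k} \to \OO_{\PP^1}(4k+2)^k$ is surjective and (ii) all $k$ generating relations land in the single cofactor-degree $2k-1$, since in this critical case $n = 2k+1$ every summand equals $\OO_{\PP^1}(4)$ and there is no room for the mixed low-degree relations that appear in Proposition \ref{prop-balanced222}. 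Arranging the pure powers $s^{2k-1}, t^{2k-1}$ in each row while controlling the extra monomial entries produced by the long generators $q_{i,j}$, uniformly in the residue of $n$ and in all characteristics, is where the real bookkeeping lies.
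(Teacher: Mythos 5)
Your outer frame agrees with the paper's: reduce by openness to exhibiting one complete intersection, note that the kernel of $\OO_{\PP^1}(2k+3)^{2k} \to \OO_{\PP^1}(4k+2)^k$ has rank $k$ and degree $4k$ so that balanced means $\OO_{\PP^1}(4)^k$, write down explicit quadrics via Proposition \ref{RNCnormalBundle}, and close with a two-complementary-minors check. But the pivotal middle step rests on a false claim. A degree count only tells you that an $\OO_{\PP^1}(4)$ summand of the kernel is a syzygy $\sum_\ell h_\ell c_\ell = 0$ with the $h_\ell$ \emph{arbitrary} forms of degree $2k-1$; nothing forces the $h_\ell$ to be monomials, let alone the syzygy to be a binomial $t^{2k-1}c_\ell - s^{2k-1}c_m = 0$. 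Worse, the configuration your plan demands cannot be realized: such a binomial relation forces $c_\ell = s^{2k-1}v$ and $c_m = t^{2k-1}v$ for a single constant vector $v$ (the matrix entries have degree exactly $2k-1$), and if two of your $k$ pairs shared a column you would get either a zero column or two equal columns (the latter producing an $\OO_{\PP^1}(2k+3)$ sub-bundle of the kernel, contradicting balancedness), so the pairs must be disjoint and \emph{all} $2k$ columns must be pure powers of $s$ or $t$. By Proposition \ref{RNCnormalBundle}, however, a generator $q_{i,j}$ contributes the monomial $s^{n-j-i+\ell}t^{j+i-\ell-2}$ to every column $i \le \ell \le j-1$, and these are mixed except at the extreme columns of the range; chasing this through shows the only quadrics all of whose column contributions are pure powers are combinations of $q_{1,2}$ and $q_{n-1,n}$, a $2$-dimensional space. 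So for $k \ge 3$ no choice of $k$ quadrics produces the matrix shape you are searching for, and your search criterion would in fact exclude every example that works.

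The paper's example shows what the correct relation pattern looks like. With $f_i = q_{1,i} + q_{2k-i+1,2k+1}$ for $1 \le i \le k$, the $k$ syzygies are $\sum_{j=0}^{i-1} s^{i-j-1}t^{2k-i+j}c_{k-j} - \sum_{j=1}^{i} s^{2k-i+j-1}t^{i-j}c_{k+j} = 0$: all coefficients are monomials of degree $2k-1$, as you correctly predicted they must be, but the $i$-th relation involves $2i$ columns, and only the $i=1$ relation $t^{2k-1}c_k - s^{2k-1}c_{k+1}=0$ has your binomial shape. The closing step is then exactly the one you anticipated: the $2k \times k$ relation matrix has two maximal minors equal to $t^{k(2k-1)}$ and $(-1)^k s^{k(2k-1)}$, which never vanish simultaneously, so these relations generate the kernel, which is therefore $\OO_{\PP^1}(4)^k$. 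One smaller point: there is no ``residue of $n$'' case analysis here, since $n = 2k+1$ is determined by the statement --- that case division belongs to Proposition \ref{prop-balanced222}, and importing it suggests you were pattern-matching rather than working out this case. In sum, the numerics, the openness reduction, and the minors endgame are right, but the proposal is missing its actual mathematical core --- the explicit quadrics and their (necessarily non-binomial) syzygies --- and the ``degree reasons'' heuristic meant to supply it is incorrect.
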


\begin{proof}
Let $f_i = q_{1,i} + q_{2k-i+1, 2k+1}$ for $1 \leq i \leq k$. Let $X$ be the intersection of the $k$ quadrics $f_1, \dots, f_k$. Then we claim that $N_{C/ X} = \OO_{\PP^1}(4)^k$. 
The normal bundle is  the kernel of the map $$N_{R_{2k+1}/ \PP^{2k+1}} \cong \OO_{\PP^1}(2k+3)^{2k} \rightarrow N_{X/\PP^{2k+1}}|_{R_{2k+1}} \cong \OO_{\PP^1}(4k+2)^k$$ given by the matrix
$$\left( \begin{array}{cccccccccc} s^{2k-1}  & 0 & 0 & 0 & 0 & 0 & 0 & 0 & 0 & t^{2k-1} \\   s^{2k-2} t  & s^{2k-1} & 0 & 0 & 0 & 0 & 0 & 0 & t^{2k-1} & s t^{2k-2} \\   &   &  &\cdots & \cdots&\cdots &\cdots & & &  \\     s^k t^{k-1} & \cdots &  s^{2k-3} t^2 & s^{2k-2} t & s^{2k-1} & t^{2k-1} & s t^{2k-2} & s^2 t^{2k-3} & \cdots & s^{k-1}t^k\end{array} \right).$$
The columns $c_i$ of this matrix satisfy the relations $$ \sum_{j=0}^{i-1}  s^{i-j-1} t^{2k-i+j} c_{k-j}  - \sum_{j=1}^i   s^{2k-i+j -1} t^{i-j} c_{k+j}  \ \ \mbox{for} \ \ 1 \leq i \leq k.$$ This matrix of relations has two minors equal to $t^{k(2k-1)}$ and $(-1)^k s^{k(2k-1)}$, which do not simultaneously vanish on $R_{2k+1}$. We conclude that the kernel of the map is given by  $\OO_{\PP^1}(4)^k$ as desired.
\end{proof}

As a final illustration of these techniques, we sketch the proof of  the following proposition.

\begin{proposition}
\label{degree2kProp}
Let $X$ be a general complete intersection of $k$ quadrics in $\PP^{2k}$ containing $R_{2k}$. Then the normal bundle of $R_{2k}$ on $X$ is balanced.
\end{proposition}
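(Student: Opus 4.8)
The plan is to follow the explicit approach of Propositions \ref{prop-balanced22} and \ref{prop-2k}: since being balanced is an open condition in a family of vector bundles on $\PP^1$ of fixed rank and degree, it suffices to exhibit a single complete intersection $X$ of $k$ quadrics through $R_{2k}$ for which $N_{R_{2k}/X}$ has the expected balanced splitting type. First I would record the numerology. By Corollary \ref{cor-RnNormalBundle} we have $N_{R_{2k}/\PP^{2k}} \cong \OO_{\PP^1}(2k+2)^{2k-1}$, and each quadric restricts to $\OO_{R_{2k}}(4k)$, so $N_{R_{2k}/X}$ is the kernel of a map
\[ N_{R_{2k}/\PP^{2k}} \cong \OO_{\PP^1}(2k+2)^{2k-1} \longrightarrow N_{X/\PP^{2k}}|_{R_{2k}} \cong \OO_{\PP^1}(4k)^k . \]
This kernel has rank $(2k-1)-k = k-1$ and degree $(2k-1)(2k+2) - 4k^2 = 2k-2 = 2(k-1)$, so a balanced normal bundle must be exactly $\OO_{\PP^1}(2)^{k-1}$; this is the target splitting type. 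Note that Corollary \ref{cor-RnNormalBundle} and Proposition \ref{RNCnormalBundle} hold in all characteristics, so there is no issue computing the ambient normal bundle.

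Next I would make the map explicit. Using Proposition \ref{RNCnormalBundle}, which provides the basis $b_{1,2}, \dots, b_{2k-1,2k}$ of $H^0(N_{R_{2k}/\PP^{2k}})$ together with the monomial staircase that each generator $q_{i,j}$ contributes to the columns indexed $i, \dots, j-1$, the map becomes an explicit $k \times (2k-1)$ matrix of forms of degree $2k-2$ once the defining quadrics are written as $f_m = \sum a_{i,j}^{(m)} q_{i,j}$. I would then choose $k$ quadrics $f_1, \dots, f_k$ of the symmetric shape used in Proposition \ref{prop-2k} — pairing a ``low'' generator with its mirror ``high'' generator about the central variable $x_k$ — possibly splitting into cases according to the residue of the index pattern, exactly as in Proposition \ref{prop-balanced222}. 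The point of the choice is to arrange that the $2k-1$ columns $c_1, \dots, c_{2k-1}$ admit $k-1$ linear relations $\sum_i p_i c_i = 0$ whose coefficients $p_i$ are monomials (or binomials) in $s,t$ of degree $2k$, since such a relation corresponds precisely to an inclusion $\OO_{\PP^1}(2) \hookrightarrow N_{R_{2k}/\PP^{2k}}$ landing in the kernel.

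Finally, to conclude that these relations cut out the entire kernel, I would form the $(2k-1) \times (k-1)$ coefficient matrix of the relations and exhibit two of its maximal $(k-1) \times (k-1)$ minors that equal, up to sign and scalar, a power of $s$ and a power of $t$; since these have no common zero on $\PP^1$, the $k-1$ relations are everywhere linearly independent and define a subbundle inclusion $\OO_{\PP^1}(2)^{k-1} \hookrightarrow N_{R_{2k}/X}$. As this subbundle already has the full rank $k-1$ (and, consistently, the full degree $2(k-1)$) of $N_{R_{2k}/X}$, the inclusion is an isomorphism, giving $N_{R_{2k}/X} \cong \OO_{\PP^1}(2)^{k-1}$, which is balanced.

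The main obstacle is the combinatorial bookkeeping of the middle step: producing quadrics $f_1, \dots, f_k$ whose associated matrix has exactly $k-1$ relations of the correct degree $2k$, and whose relation matrix has two complementary minors that are pure powers of $s$ and $t$. As in Propositions \ref{prop-balanced22}, \ref{prop-balanced222} and \ref{prop-2k}, this is likely to require splitting into cases by the parity or residue of the indices and verifying, with a short Macaulay2 \cite{M2} check in small cases followed by an induction that enlarges the curve, that all coefficients appearing in the relations are $-1, 0$ or $1$, so that the independence of the relations — and hence the splitting type $\OO_{\PP^1}(2)^{k-1}$ — is valid in every characteristic.
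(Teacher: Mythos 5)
Your plan is exactly the paper's proof: the paper exhibits the single complete intersection $X = V(f_1,\dots,f_k)$ with $f_i = q_{1,i+1} + q_{2k-i,2k}$ for $1 \leq i \leq k-1$ and $f_k = q_{1,2k}$, writes down $k-1$ explicit relations of degree $2k$ among the columns of the resulting $k \times (2k-1)$ matrix (with coefficients $st^{2k-1}$, $\pm t^{2k}$, $\pm s^{2k}$, $-s^{2k-1}t$, all in $\{-1,0,1\}$, hence characteristic-free), and concludes $N_{R_{2k}/X} \cong \OO_{\PP^1}(2)^{k-1}$ by the same everywhere-independent-relations/full-rank-subbundle argument you describe. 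The one step you deferred turns out to need no residue case-split: the single mirror-symmetric choice of quadrics above works uniformly in $k$, unlike the situation in Proposition \ref{prop-balanced222}.
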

\begin{proof}
Let $f_i = q_{1, i+1} + q_{2k-i, 2k}$ for $1 \leq i \leq k-1$. Let $f_k = q_{1, 2k}$. Let $X= V(f_1, \dots, f_k)$. Then we claim that $N_{R_{2k}/X} \cong \OO_{\PP^1}(2)^{k-1}$. The normal bundle is the kernel of the map $$N_{R_{2k}/ \PP^{2k}} \cong \OO_{\PP^1}(2k+2)^{2k-1} \rightarrow N_{X/\PP^{2k+1}}|_{R_{2k}} \cong \OO_{\PP^1}(4k)^k$$ given by the matrix
$$\left( \begin{array}{ccccccccc} s^{2k-2}  & 0 & 0 & 0 &  \cdots  & 0 & 0 & 0 & t^{2k-2} \\   s^{2k-3} t  & s^{2k-2} & 0 & 0  & \cdots  & 0 & 0 & t^{2k-2} & s t^{2k-3} \\  
&    &\cdots & \cdots&\cdots &\cdots & \cdots  & &  \\  s^{k} t^{k-2} & s^{k+1}t^{k-3} & \cdots & s^{2k-2} & 0 & t^{2k-2} & \cdots & s^{k-3} t^{k+1} & s^{k-2} t^k  \\   
t^{2k-2} & s t^{2k-3} & \cdots & s^{k-2} t^k & s^{k-1} t^{k-1} & s^k t^{k-2} & \cdots  & s^{2k-3} t & s^{2k-2} \end{array} \right).$$
It is easy to check that the columns of this matrix satisfy the relations $$ st^{2k-1} c_{k-1} +  (s^{2k} - t^{2k}) c_k - s^{2k-1} t c_{k+1}$$ and $$st^{2k-1}c_{k-1-i}  - t^{2k}c_{k-i}  + s^{2k}c_{k+i}  - s^{2k-1} t c_{k+i+1}, \ \ \mbox{for} \ \ 1 \leq i \leq k-2.$$ The proposition follows. 
\end{proof}

Given Corollary \ref{cor-balanced2} and Propositions \ref{prop-balanced22}, \ref{prop-balanced222},  \ref{prop-2k} and \ref{degree2kProp},  it is natural to conjecture the following.

\begin{conjecture}\label{conj-balanced}
Let $X$ be a general Fano complete intersection of $k$ quadric hypersurfaces in $\PP^n$ containing the rational normal curve $R_n$. Then $N_{R_n/X}$ is balanced.
\end{conjecture}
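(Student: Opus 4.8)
The conjecture asserts that for a general Fano complete intersection $X$ of $k$ quadrics in $\PP^n$ containing $R_n$, the normal bundle $N_{R_n/X}$ is balanced. The natural strategy is to mimic the explicit computations already carried out in Propositions \ref{prop-balanced22}, \ref{prop-balanced222}, \ref{prop-2k} and \ref{degree2kProp}: exhibit one specific complete intersection $X = V(f_1, \dots, f_k)$ through $R_n$ whose normal bundle is balanced, since balancedness is an open condition in families, and hence holds for the general member. The key tool is Proposition \ref{RNCnormalBundle}, which reduces the map $\psi_f \colon N_{R_n/\PP^n} \cong \OO_{\PP^1}(n+2)^{n-1} \to \OO_{\PP^1}(2n)^k$ to an explicit $k \times (n-1)$ matrix of monomials $c_1, \dots, c_{n-1}$, computed by collecting the $b_{i,i+1}$ coefficients. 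The normal bundle $N_{R_n/X}$ is then the kernel of this matrix, and balancedness amounts to understanding the module of syzygies among the columns $c_i$.

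\textbf{Choosing the defining quadrics.} First I would write each $f_j$ as a sum of generators $q_{a,b}$ chosen so that the induced monomial entries are spread as evenly as possible across the degree range $s^{n-2}, s^{n-3}t, \dots, t^{n-2}$ and across the $k$ rows. The pattern in the proven cases suggests using quadrics $f_j$ that each contribute a sparse collection of widely-spaced monomials, together with two ``boundary'' terms (like $q_{n-j, n}$) that make the first and last columns behave like pure powers $s^{n-2}$ and $t^{n-2}$. The aim is for the resulting relation module to be generated by binomials of low degree: most relations should have the shape $t^a c_i - s^a c_{i+j} = 0$ (degree-$a$ two-term relations linking columns a fixed distance apart), with a bounded number of longer ``transition'' relations near the two ends. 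If the generic relation has degree matching $\lfloor \frac{n(n-1)-2}{\,?\,}\rfloor$-type bounds, the kernel line bundles come out to consecutive integers.

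\textbf{Verifying balancedness from the syzygy matrix.} Once the relations are written down, balancedness follows if one checks that the matrix of relations has full rank generically, i.e.\ that two complementary maximal minors are, up to sign, the pure monomials $t^N$ and $s^N$ (as happens in Propositions \ref{prop-balanced22} and \ref{prop-2k}); these do not simultaneously vanish on $R_n$, so the listed relations generate \emph{all} syzygies and the kernel splits with the degrees dictated by the relation degrees. The degree and rank of $N_{R_n/X}$ are forced: the rank is $n-k-1$ and the total degree is $n(n-k+1)-2$ by the standard exact sequence, so once one knows the splitting consists of at most two consecutive integers it is completely determined. Thus the entire argument reduces to producing the right $f_j$ and confirming the minor condition.

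\textbf{The main obstacle.} The hard part is the \emph{uniformity} in $n$ and $k$ simultaneously. The existing propositions handle fixed small $k$ or special relations $n = 2k, 2k+1$ between $n$ and $k$, where the combinatorics of placing the $q_{a,b}$ can be tuned by hand and the induction on $n$ (replacing $n$ by $n+3$, say) keeps the boundary relations intact while adding a predictable block of binomial relations. For general $k$ and general $n \geq 2k+1$, organizing the placement of generators so that the interior columns always admit clean two-term syzygies, controlling the finitely many exceptional relations at the two boundaries, and proving that the complementary minors remain pure powers $\pm s^N, \pm t^N$ — all at once, with a clean inductive step in $n$ for each residue class — is genuinely delicate. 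This is presumably why the statement is left as a conjecture rather than a theorem: the explicit-matrix method clearly \emph{should} work, but assembling a single combinatorial construction valid for all $(n,k)$ and verifying the rank condition in closed form is the obstruction.
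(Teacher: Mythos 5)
The statement you are addressing is, in the paper itself, precisely a conjecture: the authors state explicitly that they could not prove Conjecture \ref{conj-balanced} by direct calculation, that they verified it by computer only for $k=4$, $n\leq 19$ and $k=5$, $n\leq 16$, and that they instead prove a weaker statement (Proposition \ref{gluequadrics}). Your proposal is, by your own admission in its final paragraph, a plan rather than a proof: you never write down the quadrics $f_1,\dots,f_k$ for a general pair $(n,k)$, never exhibit the syzygy module of the resulting $k\times(n-1)$ matrix, and never verify the complementary-minor condition $\pm s^N, \pm t^N$ that would certify the listed relations generate all syzygies. Those three steps are exactly where the content lies; everything preceding them is scaffolding already present in the proven special cases (Corollary \ref{cor-balanced2} and Propositions \ref{prop-balanced22}, \ref{prop-balanced222}, \ref{prop-2k}, \ref{degree2kProp}, which handle $k\leq 3$ or $n\in\{2k,2k+1\}$). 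So the gap you name — a single combinatorial placement of the $q_{a,b}$, uniform in $n$ and $k$, with controlled boundary relations and a clean inductive step in each residue class — is the entire open problem, and it coincides with the obstruction the authors report. A minor arithmetic slip as well: for $k$ quadrics the total degree of $N_{R_n/X}$ is $n(n-2k+1)-2$, not $n(n-k+1)-2$, since $d=\sum d_i = 2k$.

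It is worth contrasting your intended route with how the paper actually gets around the conjecture. Rather than the direct matrix computation, Proposition \ref{gluequadrics} proceeds by degeneration: it smooths nodal unions $C_1\cup C_2$ of lower-degree rational curves with balanced normal bundles, using Lemma \ref{restrictedNormalBundle} to control the restricted normal bundles, together with the hyperplane-section extension mechanism of Proposition \ref{hyperplaneProp} and induction on $n$. This yields balanced normal bundles for all degrees $e\leq n-1$, but for $e=n$ only the weaker conclusion $i(N_{C/X})\leq 2$ and $\delta(N_{C/X})\leq 1$ — which nevertheless suffices, via Lemma \ref{lem-generalkernel} and Proposition \ref{passingtohypersurface}, for the main theorem. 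If your explicit-matrix plan could be carried out it would strictly strengthen the paper's degree-$n$ result; as written, however, it establishes nothing beyond the special cases already proved, and should be presented as a strategy for the open conjecture, not as a proof.
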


We have checked this conjecture for $k=4$, $n \leq 19$ and $k=5$, $n \leq 16$ by computer. However, we were not able to prove Conjecture \ref{conj-balanced} by direct calculation. In Proposition \ref{gluequadrics} we will prove a slightly weaker statement. For the rest of the section, we study the base cases in detail.

\begin{proposition}
\label{hyperplaneProp}
Let $R_e \subset X_0 \subset \PP^{n-1} \subset \PP^n$, where $R_e$ is a rational normal curve of degree $e$ and $X_0$ is a complete intersection. Then any extension 
\[ 0 \to N_{R_e/X_0} \to N_{R_e/X} \to N_{X_0/X}|_{R_e} = \OO_{\PP^1}(e) \to 0 \]
is realized by some complete intersection $X \subset \PP^n$ of the same multidegree as $X_0$. In particular, if $N_{R_e/X_0}$ is balanced and for $X$ a general complete intersection in $\PP^n$ containing $X_0$ and of the same multidegree as $X_0$, $N_{R_e/X}$ is balanced as well.
\end{proposition}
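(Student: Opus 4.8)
The plan is to parameterize the complete intersections $X \subset \PP^n$ of the given multidegree with $X \cap \PP^{n-1} = X_0$ and to read off the extension class of the displayed sequence as an explicit linear function of this parameter. Fix defining equations $f_1, \dots, f_k$ of $X_0$ in the coordinates $x_0, \dots, x_{n-1}$ on $\PP^{n-1} = V(x_n)$, and assume $X_0$ is smooth along $R_e$. Every complete intersection $X$ of the required type may be written as $X = V(F_1, \dots, F_k)$ with $F_i = f_i + x_n g_i$ for some $g_i \in H^0(\OO_{\PP^{n-1}}(d_i - 1))$, and each such $X$ automatically contains $R_e$ since $x_n$ vanishes on $R_e \subset \PP^{n-1}$. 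The associated restriction map is $\Psi_X \colon N_{R_e/\PP^n} \to N_{X/\PP^n}|_{R_e} = \bigoplus_i \OO_{\PP^1}(ed_i)$, and $N_{R_e/X} = \ker \Psi_X$.

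First I would split off the $x_n$-direction. Since $N_{R_e/\PP^{n-1}} \cong \OO_{\PP^1}(e+2)^{e-1} \oplus \OO_{\PP^1}(e)^{n-1-e}$, we have $\Ext^1(\OO_{\PP^1}(e), N_{R_e/\PP^{n-1}}) = H^1(N_{R_e/\PP^{n-1}}(-e)) = 0$, so the sequence $0 \to N_{R_e/\PP^{n-1}} \to N_{R_e/\PP^n} \to N_{\PP^{n-1}/\PP^n}|_{R_e} = \OO_{\PP^1}(e) \to 0$ splits. Under such a splitting, $\Psi_X$ decomposes as $(\Psi_{X_0}, \gamma)$: on the summand $N_{R_e/\PP^{n-1}}$ it restricts to the map $\Psi_{X_0}$ determined by $X_0$ (independent of the $g_i$, since $x_n$ vanishes on directions tangent to $\PP^{n-1}$), and on the summand $\OO_{\PP^1}(e)$ it is the homomorphism $\gamma = (\gamma_i)$ with $\gamma_i = g_i|_{R_e} \in H^0(\OO_{\PP^1}(e(d_i-1)))$, coming from the derivative of $F_i$ in the $x_n$-direction.

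The key observation is that $N_{R_e/X} = \ker \Psi_X$ is then the fiber product of $\Psi_{X_0} \colon N_{R_e/\PP^{n-1}} \to \bigoplus_i \OO_{\PP^1}(ed_i)$ with $-\gamma$, so the displayed extension $0 \to N_{R_e/X_0} \to N_{R_e/X} \to \OO_{\PP^1}(e) \to 0$ is the pullback along $\gamma$ of the surjection $0 \to N_{R_e/X_0} \to N_{R_e/\PP^{n-1}} \to \bigoplus_i\OO_{\PP^1}(ed_i) \to 0$. Its class is therefore $\partial(\gamma)$, where $\partial \colon \Hom(\OO_{\PP^1}(e), \bigoplus_i\OO_{\PP^1}(ed_i)) \to \Ext^1(\OO_{\PP^1}(e), N_{R_e/X_0})$ is the connecting homomorphism. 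From the long exact $\Ext$-sequence, $\partial$ is surjective because $\Ext^1(\OO_{\PP^1}(e), N_{R_e/\PP^{n-1}}) = 0$. Since $R_e$ is projectively normal, the restriction $H^0(\OO_{\PP^{n-1}}(d_i-1)) \to H^0(\OO_{\PP^1}(e(d_i-1)))$ is surjective, so every $\gamma$ arises from an actual $X$; combined with the surjectivity of $\partial$, every extension class is realized. This proves the first assertion.

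For the balancedness statement, note that as the $g_i$ vary generally the class $\partial(\gamma)$ is a general element of $\Ext^1(\OO_{\PP^1}(e), N_{R_e/X_0})$, so $N_{R_e/X}$ is a general extension of $\OO_{\PP^1}(e)$ by the balanced bundle $N_{R_e/X_0}$. I would then conclude by invoking Lemma \ref{lem-generalextension}: writing the balanced summand degrees as $a$ and $a+1$, its hypothesis reduces to $a \le e$, which holds since the slope of $N_{R_e/X_0}$ is $\frac{e(n-d)-2}{n-k-2} \le e$ in the relevant (Fano, $d_i \geq 2$) range. I expect the main obstacle to be the combination of the two surjectivities — the vanishing $\Ext^1(\OO_{\PP^1}(e), N_{R_e/\PP^{n-1}}) = 0$ and the projective normality of $R_e$ — together with the fiber-product identification of the extension class as $\partial(\gamma)$; this cohomological bookkeeping is the delicate part, after which both the realization and the balancedness follow formally.
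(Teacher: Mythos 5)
Your proof is correct and is essentially the paper's own argument: the paper likewise writes $X = V(F_1 + z_n G_1, \dots, F_k + z_n G_k)$, works with the split middle term $N_{R_e/\PP^{n-1}} \oplus \OO_{\PP^1}(e)$ (your splitting, justified by the same vanishing $\Ext^1(\OO_{\PP^1}(e), N_{R_e/\PP^{n-1}}) = 0$ coming from every summand of $N_{R_e/\PP^{n-1}}$ having degree at least $e$), realizes an arbitrary homomorphism $b \colon \OO_{\PP^1}(e) \to \bigoplus_i \OO_{\PP^1}(ed_i)$ by lifting binary forms of degree $e(d_i-1)$ to products of linear forms (your projective-normality step, in equivalent form), and obtains every extension class from the surjectivity of the connecting homomorphism, exactly as in your long exact sequence.

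One small correction to your closing slope remark: the inequality $\frac{e(n-d)-2}{n-k-2} \le e$ is equivalent to $e(k+2-d) \le 2$, which fails for a single quadric ($k=1$, $d=2$) when $2 < e \le n-1$; there the slope of $N_{R_e/X_0}$ equals $e + \frac{e-2}{n-3} > e$. This does not break your route, because Lemma \ref{lem-generalextension} requires only $a \le e$ when the balanced bundle has summands of two distinct degrees $a < a+1$ (equivalently, slope strictly less than $e+1$), and only $a \le e+1$ when all summands are equal. Since $e \le n-1$, the slope is always at most $e+1$, with equality occurring only in the pure case $N_{R_e/X_0} = \OO_{\PP^1}(e+1)^{n-3}$, where every extension by $\OO_{\PP^1}(e)$ splits and is still balanced. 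So the balancedness conclusion holds in all cases, but it should be justified via the bound $a \le e$ (resp.\ $a \le e+1$) rather than via the slope inequality as you stated it.
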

\begin{proof}
We have the following commutative diagram.

\catcode`\@=8
\newdimen\cdsep
\cdsep=2em

\def\cdstrut{\vrule height .2\cdsep width 0pt depth .1\cdsep}
\def\@cdstrut{{\advance\cdsep by 2em\cdstrut}}

\def\arrow#1#2{
  \ifx d#1
    \llap{$\scriptstyle#2$}\left\downarrow\cdstrut\right.\@cdstrut\fi
  \ifx u#1
    \llap{$\scriptstyle#2$}\left\uparrow\cdstrut\right.\@cdstrut\fi
  \ifx r#1
    \mathop{\hbox to \cdsep{\rightarrowfill}}\limits^{#2}\fi
  \ifx l#1
    \mathop{\hbox to \cdsep{\leftarrowfill}}\limits^{#2}\fi
}
\catcode`\@=10

\cdsep=2em
\begin{equation}\label{D1}
\begin{matrix}
& & 0 & & 0\cr
& & \arrow{d}{} & & \arrow{d}{} \cr
0 & \arrow{r}{} & N_{R_e/X_0}  & \arrow{r}{i} & N_{R_e/\PP^{n-1}} & \arrow{r}{a} & N_{X_0/\PP^{n-1}}|_{R_e} & \arrow{r}{} & 0          \cr
& &  \arrow{d}{} & & \arrow{d}{} & & \arrow{d}{=} \cr
0 & \arrow{r}{} & N_{R_e/X} & \arrow{r}{} & N_{R_e/\PP^{n-1}} \oplus \OO_{\PP^1}(e) & \arrow{r}{(a,b)} & N_{X/\PP^n}|_{R_e} & \arrow{r}{} & 0 \cr
& & \arrow{d}{} & & \arrow{d}{} \cr
& & \OO_{\PP^1}(e) & \arrow{r}{=} & \OO_{\PP^1}(e) \cr
& & \arrow{d}{} & & \arrow{d}{} \cr
& & 0 & & 0 \cr
\end{matrix}
\end{equation}

For any homogeneous polynomial $f(s,t)$ of degree $es$ in two variables,  there exists a homogeneous polynomial $F$ of degree $s$ in $n+1$ variables whose restriction to $R_e$ is $f(s,t)$. Since the roots of $f(s,t)$ on $R_e$ are in general linear position, in fact, we may take $F$ to be a product of $s$ linear forms.   Given $R_e$ and $X_0$, the first row of Diagram \ref{D1} is completely determined. Given the map $b$, we can find a complete intersection $X$ giving rise to Diagram \ref{D1}. Suppose $X_0$ is defined by equations $F_1= \cdots= F_k=0$  and the hyperplane $\PP^{n-1}$ is defined by $z_n =0$. Let $G_1, \dots, G_k$ be polynomials of degrees $d_1 -1, \dots, d_k -1$ whose restrictions to $R_e$  define the maps $\OO_{\PP^1}(e) \stackrel{b}{\rightarrow} \oplus_{i=1}^k \OO_{\PP^1}(ed_i)$. Then we can take $X$ to be the complete intersection defined by $F_1 + z_n G_1 = \cdots = F_k + z_n G_k =0$.

It remains to show that the map $b \mapsto N_{R_e/X}$ which maps $$\Hom(\OO_{\PP^1}(e), N_{X/\PP^n}|_{R_e}) \cong \Hom(\OO_{\PP^1}(e), N_{X/\PP^{n-1}}|_{R_e}) \to \Ext^1(\OO_{\PP^1}(e), N_{R_e/X_0})$$ is surjective. This follows since taking $\Hom(\OO_{\PP^1}(e), -)$ of the second  horizontal sequence gives
\[ \Hom(\OO_{\PP^1}(e), N_{X/\PP^n}|_{R_e}) \to \Ext^1(\OO_{\PP^1}(e), N_{R_e/X}) \to \Ext^1(\OO_{\PP^1}(e), N_{R_e/\PP^{n-1}} \oplus \OO_{\PP^1}(e)) = 0, \]
where the last term is $0$ since every summand in $N_{R_e/\PP^{n-1}}$ has degree at least $e$.
\end{proof}

\begin{proposition}\label{prop-k+2}
Let $e \leq k+2$ and let $X$ be a general complete intersection of $k$ quadrics in $\PP^{k+2}$ containing $R_e$. Then $X$ is smooth along $R_e$ and $N_{R_e/X}$ is a line bundle of the expected degree.
\end{proposition}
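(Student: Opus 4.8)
The plan is to deduce the entire statement from the single assertion that a general $X$ is smooth along $R_e$. Indeed, since $X$ is cut out by $k$ quadrics in $\PP^{k+2}$, if $X$ is smooth along $R_e$ then $X$ is a smooth surface in a neighborhood of $R_e$, so $N_{R_e/X}$ is a vector bundle of rank $2-1=1$, i.e. a line bundle. Its degree is then forced by the normal bundle sequence
$$0 \to N_{R_e/X} \to N_{R_e/\PP^{k+2}} \to N_{X/\PP^{k+2}}|_{R_e} \to 0,$$
which is exact on the right precisely because $X$ is a smooth local complete intersection along $R_e$. Using $\deg N_{R_e/\PP^{k+2}} = e(k+3)-2$ and $N_{X/\PP^{k+2}}|_{R_e} = \OO_{\PP^1}(2e)^{\oplus k}$, I would get $\deg N_{R_e/X} = e(k+3)-2-2ek = e(3-k)-2$, which is the expected degree: here $n-k-1 = 1$ and $e(n-d+1)-2 = e(3-k)-2$ with $d = 2k$. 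So it remains only to prove that the general $X$ is smooth along $R_e$.

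Next I would reformulate smoothness in terms of the bundle map studied in this section and set up an incidence count. Smoothness of $X = V(f_1,\dots,f_k)$ at a point $p \in R_e$ is equivalent to the linear independence of the conormal classes of $f_1,\dots,f_k$ at $p$, i.e. to fibrewise surjectivity at $p$ of the induced map $\Psi = (\psi_{f_1},\dots,\psi_{f_k}): N_{R_e/\PP^{k+2}} \to \OO_{\PP^1}(2e)^{\oplus k}$. Thus $X$ is smooth along all of $R_e$ if and only if $\Psi$ is a surjection of bundles. Writing $\mathcal{W} = H^0(\II_{R_e/\PP^{k+2}}(2))$, I would consider the incidence locus $Z \subset \mathcal{W}^{\times k} \times R_e$ of pairs $((f_i),p)$ for which $\psi_{f_1}(p),\dots,\psi_{f_k}(p)$ are linearly dependent in the fibre $N^*_{R_e/\PP^{k+2}}|_p$, and it suffices to show that $Z$ does not dominate $\mathcal{W}^{\times k}$.

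The crux is the following pointwise claim: for each fixed $p \in R_e$, the evaluation $\ev_p: \mathcal{W} \to N^*_{R_e/\PP^{k+2}}|_p$ sending $f$ to its conormal class $\psi_f(p)$ is surjective. To prove it I would split $N^*_{R_e/\PP^{k+2}}|_p$ according to the span $\PP^e$ of $R_e$. The internal conormal $N^*_{R_e/\PP^e}|_p$, of rank $e-1$, is generated by the classes of the quadrics $q_{ij}$, since $R_e$ is a smooth curve cut out scheme-theoretically by these $2\times 2$ minors, so their differentials generate the conormal fibre at every point. For the $k+2-e$ complementary directions, spanned by $dx_{e+1},\dots,dx_{k+2}$, I would use the quadrics $x_j x_m \in \mathcal{W}$ with $j \leq e < m$: at $p$ the class of $x_j x_m$ equals $x_j(p)\,dx_m$, and since the homogeneous coordinates $x_j(p)$ for $0 \le j \le e$ do not all vanish at any point of $R_e$, varying $j$ and $m$ reaches every complementary direction. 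Hence $\ev_p$ is surjective for every $p$.

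Granting this, the dimension count closes the argument. The locus of $k$-tuples of vectors in $K^{k+1}$ that are linearly dependent is the determinantal variety of $(k+1)\times k$ matrices of rank $\le k-1$, of codimension $(k+1-(k-1))(k-(k-1)) = 2$. Since $\ev_p$ is surjective, so is $\ev_p^{\oplus k}: \mathcal{W}^{\times k} \to (N^*_{R_e/\PP^{k+2}}|_p)^{\oplus k}$, and the fibre of $Z$ over $p$ is the preimage of this codimension-$2$ locus, hence has codimension $2$ in $\mathcal{W}^{\times k}$. Therefore $\dim Z \le \dim \mathcal{W}^{\times k} - 2 + \dim R_e = \dim \mathcal{W}^{\times k} - 1$, so $Z$ cannot dominate $\mathcal{W}^{\times k}$ and the general $X$ is smooth along $R_e$. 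I expect the main obstacle to be the pointwise surjectivity of $\ev_p$ in the third step, specifically checking it \emph{uniformly} over all $p \in R_e$, including the two points where a single internal coordinate vanishes; the decomposition into internal and complementary directions is precisely designed to control this.
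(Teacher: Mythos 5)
Your proposal is correct, but it proves the proposition by a genuinely different route than the paper. The paper is constructive: it exhibits explicit quadrics (for $e=k+2$, $f_i = q_{1,i+1}+q_{i+2,k+2}$ and $f_k=q_{1,k+2}$; for $e<k+2$, an inductive modification involving $q_{1,k+1-u}+x_ex_{e+1}$ and $x_0x_{i-1}+x_ex_i$), writes out the matrix of $\Psi: N_{R_e/\PP^{k+2}} \to \OO_{\PP^1}(2e)^k$ in the basis of Proposition \ref{RNCnormalBundle}, and verifies full rank at every point of $\PP^1$ by hand --- for $e=k+2$ by producing the minimal relation of degree $k^2$ among the columns (minimality being checked via the absence of common factors), and for $e<k+2$ by induction on $u=k+2-e$, extracting nonvanishing $k\times k$ minors. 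You instead argue non-constructively: you correctly reduce the entire statement to smoothness along $R_e$ (the degree is then forced by exactness of the normal bundle sequence, which is also how the paper's ``expected degree'' arises), you prove pointwise surjectivity of $\ev_p: H^0(\II_{R_e}(2)) \to N^*_{R_e/\PP^{k+2}}|_p$ by splitting along the span $\Lambda$ of $R_e$ --- the $q_{ij}$ generate the quotient $N^*_{R_e/\Lambda}|_p$ fibrewise by Nakayama since they generate $\II_{R_e/\Lambda}$, and the monomials $x_jx_m$ with $j\le e<m$ sweep out the sub $N^*_{\Lambda/\PP^{k+2}}|_p$ since some $x_j(p)\neq 0$ --- and you then kill the bad locus with the codimension-$2$ determinantal count for dependent $k$-tuples in the $(k+1)$-dimensional conormal fibre against the $1$-dimensional base $R_e$; all of this is characteristic-free, matching the paper's generality. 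Your argument is shorter, uniform in $e$ (no induction on $u$), and scales: the same count shows a general complete intersection of $k$ quadrics in $\PP^n$ is smooth along $R_e$ whenever $n-k\ge 2$. What the paper's explicit computation buys is concrete equations and the kernel identified on the nose, in the same computational style reused throughout \S\ref{sec-quadrics} (Propositions \ref{prop-balanced22}--\ref{degree2kProp}), where the full splitting type, not just the degree, matters. One small point you should make explicit: to pass from ``general tuple in $\mathcal{W}^{\times k}$'' to ``general complete intersection containing $R_e$'' as the proposition is phrased, note that the base locus of $\mathcal{W}$ is exactly $R_e$ (any point off $R_e$ is avoided either by some $q_{ij}$, by some $x_jx_m$, or by $x_m^2$), so by Bertini a general tuple cuts out a surface and the two notions of ``general'' agree; this is standard but is the hinge on which your incidence count actually yields the stated conclusion.
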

\begin{proof}
Since it is the complete intersection of $k$ hypersurfaces in $\PP^{k+2}$, $X$ is a surface. To prove the proposition it suffices to exhibit an $X$ containing $R_e$ with normal bundle of the expected degree. Our strategy will be to exhibit quadrics for which the induced map $N_{R_e/\PP^n} \to \OO_{\PP^1}(2e)^k$ has full rank everywhere.

We first study the case $e=k+2$. Let $f_i = q_{1,i+1} + q_{i+2,k+2}$ for $1 \leq i < k$, let $f_k = q_{1,n}$, and let  $X = V(f_1, \dots, f_k)$. The map $N_{R_{k+2}/\PP^{k+2}} \to \OO_{\PP^1}(2(k+2))^k$ is given by the matrix
\[ \left( \begin{array}{lllllll}
 s^{k}  & 0 & t^k & st^{k-1} & s^2t^{k-2} & \dots & s^{k-2}t^{2}  \\
 s^{k-1}t & s^{k} & 0 & t^k & st^{k-1} & \dots & s^{k-3}t^3 \\
 \dots & 	   & 	 & \dots & & &  \\
 s^2 t^{k-2} & s^3t^{k-3} & s^4t^{k-4} & \dots & s^k & 0 & t^k \\
 t^{k} & st^{k-1} & s^2 t^{k-2} & \dots & s^{k-2}t^2 & s^{k-1}t  & s^k \\
 \end{array}
 \right)
 \]
We show that the relation among the columns of this matrix has the expected degree, $k^2$. The generic rank of the map is $k$, since the rank is clearly $k$ for $t=0$.  Let $C_i$ for $1 \leq i \leq k+1$ be the $i$th column of the matrix. Let 
 $$a_i = s^{(k+1)(i-1) + 1}t^{k^2 - (k+1)(i-1)-1}\ \ \mbox{for} \ \ 1 \leq i \leq k\ \  \mbox{and} \ \ a_{k+1} = 0.$$  
Let $ b_i = s^{(k+1)(i-2)}t^{k^2 - (k+1)(i-2)}$ for $2 \leq i \leq k+1$ and  $b_1 = 0.$ Then we claim that
\begin{equation}\label{relation1}
\sum_{i=1}^{k+1} (a_i - b_i)  C_i =0.
\end{equation} 
Let $C_{i,j}$ be the $j$th entry of the column $C_i$. Notice that $C_{i,j} = \frac{t}{s} C_{i+1,j}$ for every pair $i,j$ with $1 \leq j \leq k$ except for $j < k$ and $i = j,j+1$. Since $a_i = \frac{s}{t}b_{i+1}$ for $1 \leq i \leq k$, it follows that 
\[ \sum_{i=1}^{k+1} a_i C_{i,k} - \sum_{i=1}^{k+1} b_i C_{i,k} = \sum_{i=1}^k a_i C_{i,k} - \sum_{i=1}^k \frac{t}{s} a_i \frac{s}{t} C_i = 0. \] 
For $j < k$, all the terms of $\sum_{i=1}^{k+1} a_i C_{i,j} - \sum_{i=1}^{k+1} b_i C_{i,j}$ will cancel out except for $a_j C_{j,j}$ and $-b_{j+2} C_{j+2,j}$, so
\[ \sum_{i=1}^{k+1} a_i C_{i,j} - \sum_{i=1}^{k+1} b_i C_{i,j}  = s^{(k+1)j}t^{k^2-(k+1)(j-1)-1}-s^{(k+1)j}t^{k^2-(k+1)(j-1)-1} = 0 .\]
Therefore, the relation (\ref{relation1}) holds. We now show that it is a minimal relation. Since $a_1 = st^{k^2-1}$, we know that any common factor of all the $a_i - b_i$ is a monomial in $s$ and $t$. However,  $s$ does not divide $a_2-b_2 = s^{k+2}t^{k^2-k-2} - t^{k^2}$ and $t$ does not divide $a_k-b_k = s^{k^2} - s^{k^2-k-2} t^{k+2}$. Thus, the relation is minimal. This completes the proof that for $e = k+2$, the normal bundle is the expected line bundle $\OO_{\PP^1}(-k^2+k+4)$.

We now do induction on $u = k+2 - e$. When $u=1$, let $f_i = q_{1, i+1} + q_{i+2, k+1}$ for $1 \leq i \leq k-1$ and let $f_{k} = q_{1,k}+x_{k+1} x_{k+2} $. The map $N_{R_e/\PP^{k+2}} \cong \OO_{\PP^1}(e+2)^{k} \oplus \OO(e) \rightarrow N_{X/\PP^{k+2}|_{R_e}} \cong \OO_{\PP^1}(2e)^k$ is given by the matrix 
\[ \left( \begin{array}{llllllll}
 s^{e-2}  & 0 & t^{e-2} & st^{e-3} & s^2t^{e-4} & \dots & s^{e-4}t^{2} & 0  \\
 s^{e-3}t & s^{e-2} & 0 & t^{e-2} & st^{e-3} & \dots & s^{e-5}t^3 & 0 \\
 \dots & 	   & 	 & \dots & & & & 0 \\
 s^2t^{e-4} & s^3 t^{e-5} & s^4 t^{e-6} & \dots & s^{e-2} & 0 & t^{e-2} & 0\\
 t^{e-2} & st^{e-3} & s^2 t^{e-4} & s^3t^{e-5} & \dots & s^{e-3}t & s^{e-2} & 0 \\
st^{e-3} & s^2 t^{e-4} & s^3 t^{e-5} & \dots & s^{e-3}t & s^{e-2} & 0 & t^{e} \\
 \end{array}
 \right),
 \]
which we show is nondegenerate. To do this, we work at an arbitrary point $[s,t]$ of $\PP^1$. If $t=0$, we see from the placement of the $s^{e-2}$'s that the matrix has full rank, which shows that $X$ is smooth along $C$ at that point. If $t \neq 0$, then we need to select $k$ columns with nonvanishing determinant. By the $u=0$ case, we can choose $k-1$ of the first $k$ columns so that the minor $M$ given by those columns and the first $k-1$ rows is nonzero. To get a nonvanishing $k \times k$ minor take these  $k-1$ columns together with the last column. Then the determinant of this square matrix is $t^e$ times $M$. By hypothesis, both $t^e$ and $M$ are nonzero, which completes our proof.

For $u > 1$, $f_i = q_{1,i+1} + q_{i+2,k+2-u}$ for $1 \leq i \leq k-u$, $f_{k-u+1} = q_{1,k+1-u}+ x_e x_{e+1}$, and $f_i = x_0 x_{i-1} + x_e x_{i}$ for $k-u+2 \leq i \leq k$. The map $N_{R_e/\PP^{k+2}} = \OO_{\PP^1}(e+2)^{e-1} \oplus \OO_{\PP^1}(e)^{k+2-e} \to N_{X/\PP^{k+2}}|_{R_e} = \OO_{\PP^1}(2e)^k$  is given by the matrix
\[ \left( \begin{array}{lllllllllll}
 s^{e-2}  & 0 & t^{e-2} & st^{e-3} & s^2t^{e-4} & \dots & s^{e-4}t^{2} & 0 & 0 & \dots & 0 \\
 s^{e-3}t & s^{e-2} & 0 & t^{e-2} & st^{e-3} & \dots & s^{e-5}t^3 & 0 & 0 & \dots & 0 \\
 \dots & 	   & 	 & \dots & & & & 0 \\
 s^2t^{e-4} & s^3 t^{e-5} & s^4 t^{e-6} & \dots & s^{e-2} & 0 & t^{e-2} & 0 & 0 & \dots & 0\\
 t^{e-2} & st^{e-3} & s^2 t^{e-4} & s^3t^{e-5} & \dots & s^{e-3}t & s^{e-2} & 0 & 0 & \dots & 0 \\
st^{e-3} & s^2 t^{e-4} & s^3 t^{e-5} & \dots & s^{e-3}t & s^{e-2} & 0 & t^{e} & 0 & \dots & 0 \\
  \dots & 	   & 	 & \dots & & & & \dots \\
 0 & 0 & 0 & \dots & 0 & 0 & \dots & 0 & s^e & t^{e} & 0 \\
 0 & 0 & 0 & \dots & 0 & 0 & 0 & \dots & 0 & s^e & t^{e} \\
 \end{array}
 \right)
 \]
 
We show that the matrix has full rank at every point $[s,t]$ of $\PP^1$ as before. If $t=0$, the pattern of $s^e$ terms implies that the matrix has full rank. If $t \neq 0$, then by induction we can find a $k-1$ by $k-1$ minor $M$ of the first $k$ columns that does not vanish. Then taking the columns from the minor $M$ together with the last column, we obtain a $k$ by $k$ minor that does not vanish.
\end{proof}

\begin{corollary}
\label{rPlus2Cor}
A general complete intersection of $k$ quadrics in $\PP^n$ with $k \leq n-2$ has rational curves of every degree $e \leq k+2$ with balanced normal bundle.
\end{corollary}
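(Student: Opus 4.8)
The plan is to reduce the statement to the single ambient dimension $n = k+2$, where Proposition \ref{prop-k+2} applies, and then to climb up to arbitrary $n$ one hyperplane at a time using Proposition \ref{hyperplaneProp}. Fix a degree $e$ with $1 \le e \le k+2$; since $k \le n-2$ we have $e \le k+2 \le n$, so the rational normal curve $R_e$ genuinely lies in $\PP^n$. I would prove, by induction on $n \geq k+2$, that a general complete intersection $X$ of $k$ quadrics in $\PP^n$ containing $R_e$ has $N_{R_e/X}$ balanced; the existence of balanced rational curves of each such degree $e$ is then immediate.

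For the base case $n = k+2$, Proposition \ref{prop-k+2} applies verbatim: a general complete intersection $X_0$ of $k$ quadrics in $\PP^{k+2}$ containing $R_e$ is smooth along $R_e$ and $N_{R_e/X_0}$ is a line bundle of the expected degree. Here $X_0$ is a surface, so $N_{R_e/X_0}$ has rank $(k+2)-k-1 = 1$; a single line bundle is vacuously balanced, which settles the bottom of the induction.

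For the inductive step, I would embed $\PP^{n-1} \subset \PP^n$ as a hyperplane and take $X_0 \subset \PP^{n-1}$ a general complete intersection of $k$ quadrics containing $R_e$ with $N_{R_e/X_0}$ balanced, as furnished by the inductive hypothesis. Proposition \ref{hyperplaneProp} then produces, for a general complete intersection $X$ of $k$ quadrics in $\PP^n$ containing $X_0$, a normal bundle $N_{R_e/X}$ realized as a general extension of $N_{X_0/X}|_{R_e} = \OO_{\PP^1}(e)$ by the balanced bundle $N_{R_e/X_0}$; the ``in particular'' clause of that proposition guarantees that balancedness is preserved. Since the lift $X_0 \rightsquigarrow X$ alters neither the multidegree $(2,\dots,2)$ nor the curve $R_e$, this advances the induction and exhibits the desired curves of every degree $e \le k+2$.

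The one point I would be careful about is the precise genericity being asserted, and this is where the real (if modest) subtlety lies rather than in any computation. The curves produced are rational normal curves $R_e$, and once $n$ leaves the Fano range the general complete intersection of $k$ quadrics need not contain any $R_e$ at all; accordingly the genericity must be taken inside the family of complete intersections containing a fixed $R_e$, exactly as in Propositions \ref{prop-k+2} and \ref{hyperplaneProp} and in Corollaries \ref{hyp} and \ref{no2s}. Since balancedness is an open condition in families, it suffices that the constructed locus of pairs $(R_e, X)$ be nonempty, which the two propositions supply; no genuinely new argument is needed beyond assembling them along the induction.
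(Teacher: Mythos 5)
Your proof is correct and is essentially the paper's own argument: the paper's proof of Corollary \ref{rPlus2Cor} consists of the single line ``This follows immediately from Propositions \ref{hyperplaneProp} and \ref{prop-k+2},'' and your induction on $n$ with base case $n=k+2$ (where $N_{R_e/X}$ is a line bundle, vacuously balanced) and inductive step via the ``in particular'' clause of Proposition \ref{hyperplaneProp} is exactly the implicit content of that citation, made explicit. Your closing remark on interpreting genericity within the family of complete intersections containing a fixed $R_e$ is a fair gloss that the paper leaves tacit.
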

\begin{proof}
This follows immediately from Propositions \ref{hyperplaneProp} and \ref{prop-k+2}.
\end{proof}

\section{Rational curves with balanced normal bundle on Fano complete intersections}\label{sec-final}

\begin{definition}
Let  $E = \bigoplus_{i=1}^r \OO(a_i)$ be a vector bundle on $\PP^1$ with $$a_1 \leq a_2 \leq \cdots a_{j-1} < a_{j} = \cdots = a_r.$$  Let the \emph{imbalance} $i(E)$ of $E$ be $a_r -a_1$. Set $\delta_i = a_j -1 - a_i$ for $1 \leq i < j$. Let the \emph{indentation} of $E$ be $\delta(E) = \sum_{i=1}^{j-1} \delta_i$.  
\end{definition}
The main theorem of this section is the following.

\begin{theorem}\label{Thm-mainbalanced}
Let $X$ be a general Fano complete intersection in $\PP^n$ of degree $(d_1, \dots, d_k)$.
\begin{enumerate}
 \item If  at least one of the $d_i$ is not $2$, then $X$ contains rational curves of every degree $e \leq n$ with balanced normal bundle.
 \item If $d_1 = \cdots = d_k=2$ and $n \geq 2k+1$, then $X$ contains rational curves of every degree $e \leq n-1$ with balanced normal bundle.
 \item If $d_1 = \cdots = d_k=2$ and $n = 2k$, then $X$ contains rational curves of every degree $e \leq k+2$ with balanced normal bundle.
\end{enumerate}
\end{theorem}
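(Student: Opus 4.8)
The plan is to deduce all three cases from the explicit quadric computations of Section~\ref{sec-quadrics}, propagated by three operations: intersecting with a hypersurface of degree at least $3$ (Proposition~\ref{passingtohypersurface}), raising the ambient dimension while fixing both the curve and the multidegree (Proposition~\ref{hyperplaneProp}), and smoothing a nodal union of a balanced curve with a general line. The invariants $i(E)$ and $\delta(E)$ are the bookkeeping device for the smoothing: a bundle is balanced exactly when $\delta(E)=0$, so each smoothing step aims to attach a line without raising the indentation, ultimately forcing it to $0$. Case (3) requires none of this: for $n=2k$ with $k\geq 2$ one has $k\leq n-2$, so Corollary~\ref{rPlus2Cor} already gives balanced normal bundles for every degree $e\leq k+2$.

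For case (2), with all $d_i=2$ and $n\geq 2k+1$, I would first assemble base cases. Corollary~\ref{rPlus2Cor} handles $1\leq e\leq k+2$, and Propositions~\ref{prop-2k} and~\ref{degree2kProp} supply balanced curves of degrees $2k+1$ and $2k$ in $\PP^{2k+1}$ and $\PP^{2k}$; Proposition~\ref{hyperplaneProp} carries these into the general quadric complete intersection in $\PP^n$ without destroying balancedness, the relevant $\Ext^1$ vanishing because every summand of $N_{R_e/\PP^m}$ has degree at least $e$. To reach the remaining degrees I would glue a general line $\ell\subset X$ to a balanced curve $C$ of degree $e-1$ and smooth the node; the normal bundle of the smoothing is built from $N_{C/X}$, $N_{\ell/X}$, and the gluing datum, and an estimate in the spirit of Lemma~\ref{lem-generalextension} should show that, for a general line, the indentation does not exceed $\delta(N_{C/X})=0$. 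Induction on $e$ then reaches every degree $e\leq n-1$.

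For case (1), I would reduce to the quadric case. Order the degrees so that $d_k\geq 3$ is the largest, and let $X_0$ be the complete intersection of the $j$ quadrics appearing among $(d_1,\dots,d_k)$. Since $X$ is Fano, $n\geq\sum_i d_i\geq 2j+3$, so $n\geq 2j+1$ and case (2) applies to $X_0$, giving balanced $N_{R_e/X_0}$ for every $e\leq n-1$ (and for $e\leq n$ these may be taken to be rational normal curves, so that Proposition~\ref{passingtohypersurface} applies). Intersecting successively with the hypersurfaces of degree at least $3$, in increasing order, Proposition~\ref{passingtohypersurface} preserves balancedness and produces balanced $N_{R_e/X}$ for all $e\leq n-1$. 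The degree $e=n$ is the one genuinely new point, since there $N_{R_n/X_0}$ is governed by the open Conjecture~\ref{conj-balanced} and need not be balanced. Here I would instead use that, by Proposition~\ref{passingtohypersurface}, intersecting with the first degree-$\geq 3$ hypersurface $Y$ realizes $N_{R_n/X_0\cap Y}$ as the kernel of a \emph{general} homomorphism $N_{R_n/X_0}\to\OO_{\PP^1}(nd_{j+1})$; once the splitting type of $N_{R_n/X_0}$ is controlled well enough that the numerical hypothesis of Lemma~\ref{lem-generalkernel}(1) holds (which is plausible because $nd_{j+1}\geq 3n$ is large), this kernel is balanced even though $N_{R_n/X_0}$ may not be, and the remaining hypersurfaces are added as before.

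The main obstacle is the smoothing step of case (2): one must prove that gluing a general line and smoothing a node keeps $\delta$ from growing rather than letting the imbalance spread, which demands a precise understanding of $N_{\ell/X}$ for a general line on a complete intersection of quadrics together with the gluing map --- precisely the analysis that the invariants $i(E)$ and $\delta(E)$ are designed to manage. The secondary difficulty is the endpoint $e=n$ of case (1): bypassing Conjecture~\ref{conj-balanced} forces one to bound the splitting type, in particular $\min_i\{a_i\}$, of $N_{R_n/X_0}$ from below, which is where the explicit quadric descriptions of Section~\ref{sec-quadrics} must be pushed.
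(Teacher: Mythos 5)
There is a genuine gap, and it sits exactly at the engine of your case (2): the claim that attaching a general line to a balanced curve and smoothing ``does not raise the indentation.'' Lemma \ref{restrictedNormalBundle}(2) only places the normal bundle of the smoothing in the versal deformation space of $\bigoplus_i \OO(a_{\sigma(i)}+b_i)$ for \emph{some} permutation $\sigma$ that you do not get to choose. For a line $\ell$ on a complete intersection of $k$ quadrics in $\PP^n$, the restricted bundle $N_{C\cup\ell/X}|_{\ell}$ is generically $\OO_{\PP^1}(1)^{n-2k}\oplus\OO_{\PP^1}^{\,k-1}$, which has summands of two distinct degrees; pairing this against a balanced $N_{C\cup\ell/X}|_{C}=\OO_{\PP^1}(a)^{p}\oplus\OO_{\PP^1}(a+1)^{q}$ under an adversarial $\sigma$ produces summands of degrees $a$ and $a+2$ simultaneously, i.e.\ imbalance $2$ with indentation as large as roughly $\min(p,k-1)$, not $0$ and not even $\leq 1$. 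Worse, your induction runs at fixed $n$, so there is no mechanism left to repair the damage. The paper sidesteps both problems: it glues with a curve of degree $k+1$ whose restricted normal bundle is a twist of a trivial bundle ($\OO_{\PP^1}(1)^{k-1}$ in Proposition \ref{glue2k}, $\OO_{\PP^1}(2)^{k}$ and balanced ample bundles in Proposition \ref{gluequadrics}), so that every permutation $\sigma$ gives the same sums and the defect is confined to a unique top summand; and it performs the repair not during the smoothing but during the increase of the ambient dimension, via Proposition \ref{hyperplaneProp} together with Lemma \ref{lem-generalextension} (this is where the numerical checks $e\geq 2k-e+3$ and $q+1\leq m-1$ enter), handling degrees $m-1$ and $m$ in $\PP^m$ by two separate arguments in the inductive step. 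Your three operations are the right ones, but the degree-raising operation as you specify it fails, and the extension repair cannot be folded into it.

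This gap propagates into your case (1) endgame. Your idea for $e=n$ --- realize $N_{R_n/X_0\cap Y}$ as the kernel of a general homomorphism and invoke Lemma \ref{lem-generalkernel}(1) --- is exactly the paper's, but the input you propose, namely bounding $\min_i\{a_i\}$ of $N_{R_n/X_0}$ by ``pushing the explicit quadric descriptions,'' is precisely the content of the open Conjecture \ref{conj-balanced}, which the authors state they were unable to prove by direct calculation (they verified it only computationally for small $k,n$). The paper instead extracts the needed lower bound from the gluing construction itself: the final clause of Proposition \ref{gluequadrics} produces a degree-$n$ curve with $i(N_{C/X})\leq 2$ and $\delta(N_{C/X})\leq 1$, which pins $\min_i\{a_i\}$ close enough to the average that the displayed ceiling/floor inequality in the proof of Theorem \ref{Thm-mainbalanced} verifies the hypothesis of Lemma \ref{lem-generalkernel}. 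Note also that balancedness for all $e\leq n-1$ and the controlled-indentation curve at $e=n$ are \emph{separate} outputs of Proposition \ref{gluequadrics}; your induction must produce both, and as written it produces neither once the line-gluing step breaks. (Your case (3) and the reduction $n\geq 2j+3$ in case (1) are fine, and your parenthetical that the balanced curves must be rational normal curves for Proposition \ref{passingtohypersurface} to apply correctly identifies a hypothesis the argument needs.)
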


Part (3) has already been proven in Corollary \ref{rPlus2Cor}. We prove parts (1) and (2) by deforming unions of rational curves with balanced normal bundle. This technique has been used in the literature, see \cite{Ran,GHS,HT}. We summarize the results for the reader's convenience. 
\begin{lemma}
\label{restrictedNormalBundle}
Let $C = C_1 \cup C_2$ be a union of two nonsingular rational curves meeting at a node $p$. Then:
\begin{enumerate}
\item The sections of $N_{C/X}|_{C_1}$ are rational sections of $N_{C_1/X}$ with at worst a simple pole at $p$ in the direction of $C_2$ and no other poles.
\item If $N_{C/X}|_{C_i}$ is globally generated for $i = 1,2$, then there is a smoothing of $C_1 \cup C_2$ on $X$. Furthermore, if $N_{C/X}|_{C_1} = \bigoplus_i \OO(a_i)$ and $N_{C/X}|_{C_2} = \bigoplus_i \OO(b_i)$, then $C$ smooths to a curve with normal bundle in the versal deformation space of $\bigoplus_i \OO(a_{\sigma(i)} + b_i)$ for some permutation $\sigma$ of the $a_i$.
\end{enumerate}
\end{lemma}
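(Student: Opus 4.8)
\textbf{Plan of proof for Lemma \ref{restrictedNormalBundle}.}

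The plan is to work with the standard deformation-theoretic description of the normal bundle of a nodal curve and to track it summand by summand. First I would set up the local picture at the node $p$: near $p$, the two branches $C_1$ and $C_2$ meet transversally, and the normal bundle $N_{C/X}$ of the reducible curve $C$ is not simply the direct sum of the $N_{C_i/X}$; rather, on each component $C_i$ the restriction $N_{C/X}|_{C_i}$ is the sheaf of sections of $N_{C_i/X}$ that are allowed a simple pole at $p$ in the normal direction pointing along the other branch $C_{3-i}$. This is precisely the content of part (1), and I would derive it by comparing $N_{C/X}|_{C_i}$ with $N_{C_i/X}$ using the exact sequence relating the normal sheaf of $C$ to those of its components together with the torsion contribution supported at the node. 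Concretely, $N_{C/X}|_{C_i}$ fits into $0 \to N_{C_i/X} \to N_{C/X}|_{C_i} \to T^1_p \to 0$, where the quotient is a length-one skyscraper at $p$ encoding the direction of $C_{3-i}$, which identifies sections of $N_{C/X}|_{C_i}$ with rational sections of $N_{C_i/X}$ having at worst a simple pole at $p$ in that direction.

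For part (2), the first claim is the existence of a smoothing. I would invoke the standard obstruction-theory criterion: the obstruction to smoothing $C$ inside $X$ lies in $H^1(C, N_{C/X})$, so it suffices to show this group vanishes. Since $N_{C/X}|_{C_i}$ is globally generated by hypothesis and $C_i \cong \PP^1$, each summand of $N_{C/X}|_{C_i}$ has non-negative degree, hence $H^1(C_i, N_{C/X}|_{C_i}) = 0$. I would then glue this over the node via the Mayer--Vietoris-type sequence for the reducible curve $C = C_1 \cup_p C_2$, using that the restriction maps to the fibre at $p$ are surjective (again a consequence of global generation), to conclude $H^1(C, N_{C/X}) = 0$. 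The unobstructedness then yields an actual smoothing of $C_1 \cup C_2$ to a nearby smooth rational curve on $X$.

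The more delicate part is identifying the normal bundle of the smoothing. Here I would use part (1): a section of $N_{C/X}$ restricts on $C_1$ to a section of $N_{C_1/X}(p)$ and on $C_2$ to a section of $N_{C_2/X}(p)$, and the two must agree in the shared fibre at $p$. Splitting $N_{C/X}|_{C_1} = \bigoplus_i \OO(a_i)$ and $N_{C/X}|_{C_2} = \bigoplus_i \OO(b_i)$, one chooses trivialisations near $p$ so that the gluing matches the $i$-th summand on $C_2$ with the $\sigma(i)$-th summand on $C_1$ for some permutation $\sigma$, and a local computation of the degree of the glued line bundle along the smoothing direction then shows that the resulting normal bundle degenerates to $\bigoplus_i \OO(a_{\sigma(i)} + b_i)$, so that the normal bundle of the smoothing lies in the versal deformation space of this bundle. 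The main obstacle is precisely pinning down this permutation $\sigma$: a priori the gluing at the node can mix summands in any fashion compatible with the fibre identification, so the cleanest route is to argue that \emph{some} permutation is realised and that the degrees add as stated, which is all that is claimed, rather than to control $\sigma$ exactly. By upper-semicontinuity of $h^1$ and of imbalance in families, the normal bundle of the general smoothing specialises to $\bigoplus_i \OO(a_{\sigma(i)} + b_i)$, which is exactly the assertion.
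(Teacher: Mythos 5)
The paper itself gives no proof of this lemma: it is quoted as standard, with citations to Ran, Graber--Harris--Starr and Hassett--Tschinkel, so your sketch is to be measured against the standard arguments in that literature. In outline it follows them faithfully: part (1) via the elementary-modification sequence $0 \to N_{C_1/X} \to N_{C/X}|_{C_1} \to Q \to 0$ with $Q$ a length-one skyscraper at $p$ (one quibble: the fibre of $Q$ is the image of $T_pC_2$ in the normal space of $C_1$ at $p$, not the $T^1$ of the node, which is $T_pC_1 \otimes T_pC_2$; you conflate two different skyscrapers), the $H^1$-vanishing via the Mayer--Vietoris sequence $0 \to N_{C/X} \to N_{C/X}|_{C_1} \oplus N_{C/X}|_{C_2} \to N_{C/X} \otimes k(p) \to 0$ together with global generation, and the permutation-plus-degree-addition description of the limit bundle.

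The one genuine gap is in the smoothing step: $H^1(C, N_{C/X}) = 0$ gives a smooth deformation space, but unobstructedness alone does not produce a smoothing --- a priori every deformation of $C$ in $X$ could preserve the node. You must also show that the composite $H^0(C, N_{C/X}) \to N_{C/X} \otimes k(p) \to T^1_p$ is surjective, where $T^1_p$ is the smoothing parameter of the node. Your own Mayer--Vietoris argument supplies this if pushed one step further: global generation of the two restrictions, together with $H^1(C_i, N_{C/X}|_{C_i}(-p)) = 0$, shows $N_{C/X}$ itself is globally generated, whence the map to $T^1_p$ is surjective; but as written you never draw this conclusion. Two further steps are asserted rather than argued, though both are repairable. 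First, the claim that trivialisations can be chosen so the gluing at $p$ matches summands up to a permutation is exactly the Bruhat decomposition of the gluing matrix in $\operatorname{GL}_r$ with respect to the two parabolic subgroups obtained by evaluating $\operatorname{Aut}(\bigoplus_i \OO(a_i))$ and $\operatorname{Aut}(\bigoplus_i \OO(b_i))$ at $p$; this is where $\sigma$ comes from and deserves a sentence. Second, ``upper-semicontinuity of $h^1$ and of imbalance'' is not the right invariant for the final step: to place the general smoothing in the versal deformation space of $\bigoplus_i \OO(a_{\sigma(i)} + b_i)$ one compares $h^0$ of all twists, taking as limits of $\OO_{\PP^1}(-m)$ line bundles of total degree $-m$ on the nodal curve with the degree distributed over the two components; since $N_{C/X}$ splits as glued line bundles of bidegrees $(a_{\sigma(i)}, b_i)$, this yields precisely the semicontinuity bounds characterizing specialization to $\bigoplus_i \OO(a_{\sigma(i)} + b_i)$.
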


We first analyze the case $n=2k$ as a base for induction.

\begin{proposition}\label{glue2k}
Let $X$ be a general complete intersection of $k$ quadrics in $\PP^{2k}$. Then there exist rational curves $C$ of every degree $1 \leq e \leq 2k$ such that the imbalance $i(N_{C/X})$ is at most $2$. Furthermore, if the imbalance is 2, then we may assume that there is a unique summand of $N_{C/X}$ of highest degree.
\end{proposition}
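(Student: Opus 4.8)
The plan is to construct, for each degree $1 \leq e \leq 2k$, an explicit curve $C$ on $X$ by gluing together curves whose normal bundles we already control, and then invoke Lemma \ref{restrictedNormalBundle} to read off the imbalance of the smoothing. The building blocks are precisely the curves produced earlier in the section: by Proposition \ref{degree2kProp}, $R_{2k}$ sits on a general complete intersection $X$ of $k$ quadrics in $\PP^{2k}$ with $N_{R_{2k}/X}\cong \OO_{\PP^1}(2)^{k-1}$, which is balanced; and by Corollary \ref{rPlus2Cor}, for every $e \leq k+2$ there is a rational curve of degree $e$ with balanced normal bundle. So for the range $e \leq k+2$ there is nothing new to prove (the imbalance is already at most $1$), and the real content is the range $k+2 < e \leq 2k$.

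\smallskip

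First I would handle those small degrees by citation, then turn to $k+3 \leq e \leq 2k$. Here I would degenerate the rational normal curve $R_e$ (or rather a curve of degree $e$) to a nodal union $C = C_1 \cup C_2$, where $C_1$ is a low-degree curve with balanced normal bundle supplied by Corollary \ref{rPlus2Cor} and $C_2$ is another such balanced curve, chosen so that $\deg C_1 + \deg C_2 = e$ and the two meet transversally at a single point $p$ lying on $X$. The key computation is Lemma \ref{restrictedNormalBundle}(2): if each restricted bundle $N_{C/X}|_{C_i}$ is globally generated, the union smooths and the resulting normal bundle lies in the versal deformation space of $\bigoplus_i \OO(a_{\sigma(i)}+b_i)$ for the splitting types $(a_i)$ of $N_{C/X}|_{C_1}$ and $(b_i)$ of $N_{C/X}|_{C_2}$. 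Since attaching $C_2$ at $p$ twists $N_{C_1/X}$ up by at most one in each factor (part (1): rational sections with at worst a simple pole at $p$), if $N_{C_1/X}$ and $N_{C_2/X}$ are each balanced with imbalance $\leq 1$, the glued splitting type has imbalance bounded by the sum of the individual imbalances plus the contribution of the node, which I expect to bound by $2$. One then checks that after smoothing the bundle remains in a stratum of imbalance at most $2$, and that in the worst case the top-degree summand is unique — this last point should follow by tracking parities of $e(n-d+1)-2 = 2e - 2$ against the rank $n-k-1 = k-1$, so that the balanced value is determined and a single ``extra'' unit of degree can only inflate one summand.

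\smallskip

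The genuinely delicate step will be verifying global generation of the restricted bundles $N_{C/X}|_{C_i}$ and controlling the pole at the node, since Lemma \ref{restrictedNormalBundle}(2) requires global generation precisely to guarantee smoothability and to pin down the splitting of the limit. For this I would choose the attaching point $p$ and the direction of $C_2$ generically, so that the simple pole of the rational section from part (1) genuinely raises the relevant summand degrees and no factor drops below $0$; concretely, each summand of $N_{C_1/X}$ has degree at least the minimum balanced value, and adding the node's twist keeps every summand nonnegative. I also need to ensure such a nodal configuration actually lies on a \emph{single} general complete intersection $X$ of $k$ quadrics in $\PP^{2k}$, which I would arrange using the flexibility established in Proposition \ref{hyperplaneProp} (realizing prescribed extensions by honest complete intersections of the correct multidegree) together with a dimension count showing that the space of quadrics through $C_1 \cup C_2$ is large enough to contain a smooth member.

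\smallskip

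Finally, to organize the induction cleanly I would proceed by increments: having built a degree-$e$ curve with the stated imbalance, I attach one more balanced curve (for instance a conic or a line from Corollary \ref{rPlus2Cor}, which exist for all small degrees since $k \geq 2$) to pass from degree $e$ to degree $e+1$ or $e+2$, smoothing at each stage. The imbalance is controlled inductively because gluing a balanced curve and smoothing can increase imbalance by at most the node contribution, and by rebalancing via Lemma \ref{lem-generalkernel}-type genericity the limit never exceeds imbalance $2$. The main obstacle, as noted, is not the bookkeeping of degrees but ensuring at every gluing that both restricted normal bundles are globally generated and that the smoothing exists on a general $X$; once global generation is secured, the imbalance bound and the uniqueness of the top summand in the imbalance-$2$ case follow from the numerical constraints on $\deg N_{C/X} = 2e-2$ and $\operatorname{rk} N_{C/X} = k-1$.
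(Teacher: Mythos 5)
Your overall strategy---dispose of $e \le k+2$ by Corollary \ref{rPlus2Cor}, then for $k+3 \le e \le 2k$ glue two balanced low-degree curves at a general point and smooth via Lemma \ref{restrictedNormalBundle}---is exactly the paper's, but your execution has a genuine gap at precisely the point where the proposition's second assertion must be proved. First, your numerics are wrong for this variety: since $d = 2k = n$ we have $n-d+1 = 1$, so $\deg N_{C/X} = e-2$, not $2e-2$, and the rank is $k-1$; in any case a parity count of degree against rank cannot establish uniqueness of the top summand, because Lemma \ref{restrictedNormalBundle}(2) only places the smoothed bundle in the versal deformation space of $\bigoplus_i \OO_{\PP^1}(a_{\sigma(i)}+b_i)$ for an \emph{uncontrolled} permutation $\sigma$, and in the worst case the bundle equals that direct sum. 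If you leave the split $\deg C_1 + \deg C_2 = e$ unspecified, both restricted bundles $N_{C/X}|_{C_i}$ can carry several top-degree summands, and an adversarial $\sigma$ pairs them to produce several summands of maximal degree, so neither the imbalance bound nor the uniqueness claim follows. The paper's proof hinges on the specific choice $\deg C_2 = k+1$, for which $N_{C_2/X} = \OO_{\PP^1}(1)^{k-1}$ is \emph{perfectly} balanced: the node twist then creates exactly one $\OO_{\PP^1}(2)$ on the $C_2$ side, while the $C_1$ side, of degree $e_0 = e-(k+1)$ with $N_{C_1/X} = \OO_{\PP^1}^{k-e_0+1}\oplus\OO_{\PP^1}(1)^{e_0-2}$ twisted to $\OO_{\PP^1}^{k-e_0}\oplus\OO_{\PP^1}(1)^{e_0-1}$, has summands only in $\{0,1\}$; hence at most one sum $a_{\sigma(i)}+b_i$ attains the value $3$, and the smoothing is either balanced or of the form $\OO_{\PP^1}(1)^{2k-e+1}\oplus\OO_{\PP^1}(2)^{e-k-3}\oplus\OO_{\PP^1}(3)$, which is exactly the allowed conclusion.

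Your closing induction (``attach a conic or a line and rebalance via Lemma \ref{lem-generalkernel}-type genericity'') also does not work here. On this $X$ a line has $\deg N_{\ell/X} = -1$, so lines are not free and do not pass through a general point of $X$; and Lemma \ref{lem-generalkernel} concerns kernels of general maps to a line bundle, not smoothings of nodal curves---nothing in Lemma \ref{restrictedNormalBundle} lets you force the generic smoothing to be more balanced than the worst-case direct sum, so once an imbalance-$2$ bundle appears, a further attachment can push the imbalance to $3$. Finally, the dimension count you propose for quadrics through $C_1 \cup C_2$ is unnecessary: both pieces exist on the general $X$ by Corollary \ref{rPlus2Cor}, their globally generated (respectively ample) normal bundles let them pass through a common general point $q \in X$ with $T_qC_2$ general, and global generation of both twisted restrictions is immediate from the explicit splitting types above, so smoothability comes for free.
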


\begin{proof}
By Corollary \ref{rPlus2Cor}, $X$ contains rational curves of degrees $e \leq k+2$ with balanced normal bundle. Therefore, to prove the proposition, we may assume $k+3 \leq e  \leq 2k$. Let $e_0 = e - (k+1)$, which by hypothesis satisfies $2 \leq e_0 \leq k-1$. Let $C_1$ be a curve of degree $e_0$ with balanced normal bundle $N_{C_1/X}= \OO_{\PP^1}^{k-e_0+1} \oplus \OO_{\PP^1}(1)^{e_0-2}$. The normal bundle $N_{C_1/X}$ has nonnegative degree, hence is globally generated. Consequently, we may assume $C_1$ passes through a general point $q$ of $X$.  Let $C_2$ be a degree $k+1$ rational curve through $q$ with balanced normal bundle $N_{C_2/X} = \OO_{\PP^1}(1)^{k-1}$. Let $C = C_1 \cup C_2$. Since the normal bundle of $C_2$ is ample, we can assume that the tangent direction $T_q C_2$ is general with respect to $C_1$. Consequently, by Lemma \ref{restrictedNormalBundle} (1), $N_{C/X|_{C_1}}=  \OO_{\PP^1}^{k-e_0} \oplus \OO_{\PP^1}(1)^{e_0-1}$ is balanced and $N_{C/X|_{C_2}} = \OO_{\PP^1}(1)^{k-2} \oplus \OO_{\PP^1}(2)$. By Lemma \ref{restrictedNormalBundle} (2), $C$ can be smoothed to a curve with normal bundle that is either balanced or has the form $\OO(1)^{2k-e+1} \oplus \OO(2)^{e-k-3} \oplus \OO(3)$. This concludes the proof.
\end{proof}

We next  build on Proposition \ref{glue2k} to analyze complete intersections of quadrics.

\begin{proposition}\label{gluequadrics}
Let $n > 2k$ and let $X$ be a general complete intersection of $k$ quadrics in $\PP^n$. Then $X$ contains rational curves of every degree $1 \leq e \leq n-1$ with balanced normal bundle. Furthermore, $X$ contains a rational curve of degree $n$ with $i(N_{C/X}) \leq 2$ and $\delta(N_{C/X}) \leq 1$. 
\end{proposition}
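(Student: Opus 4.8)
The plan is to induct on $n$, taking the base case $n=2k$ from Proposition \ref{glue2k} and producing the curves at level $n$ in two stages. Write $r=n-k-1$ for the rank of the normal bundle and $g=n-2k+1$, so that a degree-$e$ rational curve with balanced normal bundle on $X$ has $\deg N_{C/X}=eg-2$ (I treat only $r\geq 2$; when $r\leq 1$ the normal bundle is a line bundle and the statement is immediate). First I would obtain balanced curves of every degree $1\leq e\leq n-1$ by the hyperplane-extension of Proposition \ref{hyperplaneProp}, and then I would produce the single degree-$n$ curve with $i(N_{C/X})\leq 2$ and $\delta(N_{C/X})\leq 1$ by smoothing a two-component curve via Lemma \ref{restrictedNormalBundle}.

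For the hyperplane stage, embed $\PP^{n-1}\subset\PP^n$ and let $X_0\subset\PP^{n-1}$ be a general complete intersection of $k$ quadrics carrying a rational curve $C$ of degree $e\leq n-1$. By the inductive hypothesis (or by Proposition \ref{glue2k} when $n-1=2k$) I may take $N_{C/X_0}$ balanced for $e\leq n-2$, while the single top-degree curve has imbalance at most $2$ and small indentation. Proposition \ref{hyperplaneProp} shows that for a general $X\supset X_0$ of multidegree $(2,\dots,2)$ the bundle $N_{C/X}$ is the general extension of $N_{C/X_0}$ by $\OO_{\PP^1}(e)$, and Lemma \ref{lem-generalextension} then makes this extension balanced once the inequality $\delta(N_{C/X_0})\leq e-a_r+1$ holds, where $a_r$ is the top summand degree of $N_{C/X_0}$; this is where I would check the numerics case by case, the point being that $a_r$ is comfortably below $e$ in our range. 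Since being balanced is open and $H^1(N_{C/X})=0$ (balancedness forces nonnegative summands here), semicontinuity on the incidence variety transfers the conclusion from a complete intersection containing the fixed $X_0$ to a general complete intersection in $\PP^n$, giving all degrees $1\leq e\leq n-1$ balanced.

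For the degree-$n$ curve I would glue two balanced curves $C_1,C_2$ of degrees $e_1=n-k-1=r$ and $e_2=k+1$, both lying in $[1,n-1]$ and hence available from the previous stage, passing through a common general point $q$ with general tangent directions (possible because both normal bundles are globally generated). The crucial choice is $e_1=r$: then $\deg N_{C_1/X}=rg-2\equiv -2\pmod r$, so $N_{C_1/X}=\OO(A_1)^2\oplus\OO(A_1+1)^{r-2}$, and the upward elementary modification of Lemma \ref{restrictedNormalBundle}(1) at $q$ gives $N_{C/X}|_{C_1}=\OO(A_1)\oplus\OO(A_1+1)^{r-1}$ with \emph{exactly one} bottom summand. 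Writing $N_{C/X}|_{C_2}=\OO(A_2)^{*}\oplus\OO(A_2+1)^{*}$, every combination $\bigoplus\OO(a_{\sigma(i)}+b_i)$ of Lemma \ref{restrictedNormalBundle}(2) has summands in $\{A_1+A_2,\ A_1+A_2+1,\ A_1+A_2+2\}$, with at most one summand equal to the minimum $A_1+A_2$ (the minimum forces the unique bottom summand of $C_1$). Thus for every permutation $\sigma$ the glued bundle has imbalance $\leq 2$ and indentation $\leq 1$; since the smoothing's normal bundle lies in the versal deformation space of this glued bundle, it is at least as balanced, and the only bundles at least as balanced as one of imbalance $\leq 2$ and indentation $\leq 1$ are that bundle itself and the fully balanced one. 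Hence the degree-$n$ smoothing satisfies $i(N_{C/X})\leq 2$ and $\delta(N_{C/X})\leq 1$, closing the induction.

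The hard part is the control of $\delta$ for the smoothed bundle, since Lemma \ref{restrictedNormalBundle}(2) determines the glued bundle only up to an uncontrolled permutation $\sigma$ and up to deformation; a naive choice of components would leave many summands two below the top, producing a large $\delta$ that would then fail to feed Lemma \ref{lem-generalextension} at the next level. The device that resolves this is forcing $e_1\equiv 0\pmod r$, which makes one modified restriction have a single minimal summand and so caps $\delta$ at $1$ uniformly in $\sigma$. The remaining, more routine points I would need to nail down are the indentation inequalities feeding Lemma \ref{lem-generalextension} (both for the Proposition \ref{glue2k} inputs at $n-1=2k$, whose indentation can be as large as $r$, and for the top-degree inductive inputs), the applicability of Proposition \ref{hyperplaneProp} to the possibly non-normal rational curves produced by smoothing, and the semicontinuity passage to a general complete intersection, for which the vanishing $H^1(N_{C/X})=0$ is the key input.
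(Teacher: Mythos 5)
Your architecture coincides with the paper's proof: induction on $n$ with the hyperplane-extension stage (Proposition \ref{hyperplaneProp} feeding Lemma \ref{lem-generalextension}) for degrees $e\leq n-1$, followed by gluing curves of degrees $n-k-1$ and $k+1$ through a general point and smoothing via Lemma \ref{restrictedNormalBundle}; in particular you correctly identify the paper's key device, namely that $e_1=n-k-1$ makes $N_{C_1/X}=\OO(n-2k)^2\oplus\OO(n-2k+1)^{n-k-3}$, so the upward modification at the node has a \emph{unique} minimal summand, capping the indentation of the smoothed bundle at $1$. Your analysis of the sums $a_{\sigma(i)}+b_i$ and of the versal-deformation step is sound, and your extra care about semicontinuity ($H^1(N_{C/X})=0$) is fine, if more than the paper records.

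There is, however, one concrete point where your deferred ``routine numerics'' actually fail, and it is not cosmetic: the case $k=1$. Your assertion that ``$a_r$ is comfortably below $e$'' is false there. Run your induction at level $m$ with $k=1$ and take the top-degree input $e=m-1$ on $X_0\subset\PP^{m-1}$: here $\deg N_{C/X_0}=(m-1)(m-2)-2=m(m-3)$ and the rank is $m-3$, so the balanced splitting is $\OO(m)^{m-3}$ (for which Lemma \ref{lem-generalextension} holds only with equality), while the $i=2$, $\delta=1$ alternative that your degree-$(m-1)$ gluing cannot rule out is $\OO(m-1)\oplus\OO(m)^{m-5}\oplus\OO(m+1)$. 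For that bundle the hypothesis of Lemma \ref{lem-generalextension} reads $1\leq (m-1)-(m+1)+1=-1$ and fails; worse, \emph{no} extension of $\OO(m-1)$ by it can be balanced, since the balanced bundle of the correct rank $m-2$ and degree $(m-1)^2-2$ is $\OO(m-1)\oplus\OO(m)^{m-3}$, which admits no subsheaf map from the $\OO(m+1)$ summand. So the induction as you set it up genuinely breaks at $k=1$. The paper closes exactly this hole with a separate input: Corollary \ref{cor-balanced2} gives a \emph{balanced} degree-$n$ curve on a quadric hypersurface, which together with Proposition \ref{hyperplaneProp} settles $k=1$ entirely and justifies the reduction ``we may assume $k\geq 2$''; it is the assumption $k\geq 2$ that makes the division-algorithm estimate $q\leq (m-1)-\frac{(k-2)(m-1)+2}{m-k-2}<m-1$ work in the inductive step, so that Lemma \ref{lem-generalextension} applies to the $\delta\leq 1$ input $\OO(q-1)\oplus\OO(q)^{m-k-r-4}\oplus\OO(q+1)^{r+1}$. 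You should add this $k=1$ reduction (and actually carry out the $q<m-1$ check for $k\geq 2$, and the check $e\geq 2k-e+3$ for the Proposition \ref{glue2k} inputs at the first step above the base, both of which do hold); with those verifications in place your argument is the paper's.
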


\begin{proof}
The proof is by induction on $n$. To check the base case assume that $n=2k+1$.  If $1 \leq e \leq k+2$, then the Proposition is implied by Corollary \ref{rPlus2Cor}. We may assume $k+3 \leq e \leq 2k$. Let $X_0$ be a general intersection of $k$ quadric hypersurfaces in $\PP^{2k}$.  If $X_0$ contains rational curves of degree $k+3 \leq e \leq 2k$ with balanced normal bundle, then by Proposition \ref{hyperplaneProp} so does the general $X$. If not, by Proposition \ref{glue2k}, we may assume that $X_0$ contains rational curves with normal bundle $\OO_{\PP^1}(1)^{2k-e+1} \oplus \OO_{\PP^1}(2)^{e-k-3} \oplus \OO_{\PP^1}(3)$. By Proposition \ref{hyperplaneProp}, every extension of $$0 \rightarrow N_{C/X_0} \rightarrow N_{C/X} \rightarrow \OO_{\PP^1}(e) \rightarrow 0$$ occurs for some $X$ containing $X_0$. Since $e \geq 2k-e+3$, by Lemma \ref{lem-generalextension} the general such extension is balanced.

To complete the proof of the base case, it remains to exhibit a rational curve of degree $2k+1$ with $\delta(N_{C/X}) \leq 1$. We can find curves $C_1$ of degree $k+1$ with normal bundle $\OO_{\PP^1}(2)^k$ and $C_2$ of degree $k$ with normal bundle $\OO_{\PP^1}(1)^2 \oplus \OO_{\PP^1}(2)^{k-2}$ both passing through a general point $q$ of $X$. Since both of their normal bundles are ample, we may assume that $T_q C_i$ is general with respect to $C_j$ for $i \neq j$. Let $C = C_1 \cup C_2$. By Lemma \ref{restrictedNormalBundle} (1), $N_{C/X|_{C_1}}= \OO_{\PP^1}(1) \oplus \OO_{\PP^1}(2)^{k-1}$ and $N_{C/X|_{C_2}} = \OO_{\PP^1}(2)^{k-1} \oplus \OO_{\PP^1}(3)$. By Lemma \ref{restrictedNormalBundle} (2), $C$ can be smoothed and the normal bundle is either balanced or has the form $\OO_{\PP^1}(3)\oplus \OO_{\PP^1}(4)^{k-2} \oplus \OO_{\PP^1}(5)$. This concludes the base case $n=2k+1$.

Assume that the proposition holds up to $n=m-1\geq 2k+1$. Let $X_0$ be a general complete intersection of $k$ quadrics in $\PP^{m-1}$ and let $X$ be a general complete intersection of $k$ quadrics in $\PP^m$ that contains $X_0$ as a hyperplane section. By the inductive hypothesis, $X_0$ contains rational normal curves with balanced normal bundles of every degree $1 \leq e \leq m-2$. By Proposition \ref{hyperplaneProp}, $X$ also contains rational normal curves of these degrees with balanced normal bundle. Furthermore, by Corollary \ref{cor-balanced2} and Proposition \ref{hyperplaneProp}, we may assume that $k \geq 2$. For $e=m-1$, $X_0$ contains a rational normal curve with $\delta(N_{C/X})\leq 1$. Using the division algorithm write $(m-1)(m-2k)-2= (m-k-2)q + r$ with $0 \leq r < m-k-2$. Then the normal bundle of the general curve of degree $m-1$ on $X_0$, if not balanced,  is $\OO_{\PP^1}(q-1) \oplus \OO_{\PP^1}(q)^{m-k-r-4} \oplus \OO_{\PP^1}(q+1)^{r+1}$. By Proposition \ref{hyperplaneProp}, the general extension of $\OO_{\PP^1}(m-1)$ by $N_{C/X_0}$ occurs as $N_{C/X}$. Since $k \geq 2$, we  have
$$q \leq \frac{(m-1)(m-2k)-2}{m-k-2} = (m-1) - \frac{(k-2)(m-1) + 2}{m-k-2} < m-1.$$
Consequently, $q+1 \leq m-1$ and  by Lemma \ref{lem-generalextension} the general extension is balanced. 

Finally, we discuss the case $e=m$ in $\PP^m$. Let $C_1$ and $C_2$ be rational curves of degrees $m-k-1$ and $k+1$, respectively, passing through a general point $q$ of $X \subset \PP^m$ and having balanced normal bundles in $X$. The normal bundle of $C_1$ is  $\OO_{\PP^1}(m-2k)^2 \oplus \OO_{\PP^1}(m-2k+1)^{m-k-3}$ and the normal bundle of $C_2$ is balanced and ample. Hence, $C_1$ and $C_2$ will have general tangent directions through $q$. We conclude that $N_{C/X|_{C_1}} = \OO_{\PP^1}(m-2k) \oplus \OO_{\PP^1}(m-2k+1)^{m-k-2}$ and $N_{C/X|_{C_2}}$ is balanced. Hence, $C$ smooths to a rational curve with normal bundle having indentation at most 1. This concludes the induction step and the proof of the proposition.
\end{proof}

\begin{proof}[Proof of Theorem \ref{Thm-mainbalanced}]
For $d_1 = \dots = d_k = 2$, the theorem follows from Proposition \ref{gluequadrics}. If none of the $d_i$ are 2, then the theorem follows from  Corollary \ref{no2s}. We may assume $d_1 = \dots =d_j =2 < d_{j+1} \leq \cdots \leq d_k$, for $1 \leq j < k$. Note that in this case, $n > 2k$. First, we prove the theorem for $1 \leq e \leq n-1$. By Proposition \ref{gluequadrics} a general complete intersection of $j$ quadric hypersurfaces contains a rational normal curve $R_e$ with balanced normal bundle. Applying Proposition \ref{passingtohypersurface} repeatedly, we conclude that $X$ also contains rational normal curves $R_e$ with balanced normal bundle. The same argument holds true when $e=n$ and $j = 1$ by Corollary \ref{cor-balanced2}. There remains to show the case $e=n$ and $1 < j < k$.  By Proposition \ref{gluequadrics}, there exists a rational normal curve $R_n$ on the general complete intersection of $j$ quadric hypersurfaces with indentation at most $1$. Since $d_{j+1} \geq 3$, we have $$\left \lceil \frac{n(n-2j-d_{j+1} +1)-2}{n-j-2} \right\rceil \leq \left\lfloor \frac{n(n-2j +1)-2}{n-j-1} \right\rfloor -1 .$$ By Lemma \ref{lem-generalkernel} and Proposition \ref{passingtohypersurface}, the normal bundle of $R_e$ in the intersection of the first $j+1$ hypersurfaces is balanced. The theorem  follows by repeatedly applying Proposition \ref{passingtohypersurface}. 
\end{proof}

As a consequence of Theorem \ref{Thm-mainbalanced}, we strengthen Corollary \ref{cino2s} to include all Fano complete intersections. We recover the theorem of Z. Tian \cite{Tian},  Q. Chen and Y. Zhu \cite{ChenZhu} with sharp degree bounds.

\begin{theorem}
Let $X \subset \PP^n$ be a general Fano complete intersection of type $(d_1, \dots, d_k)$. Let $d = \sum_{i=1}^k d_i$. Then $X$ contains a very free rational curve of every degree $e \geq m = \lceil \frac{n-k+1}{n-d+1} \rceil$.
\end{theorem}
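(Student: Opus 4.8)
The plan is to feed the balanced curves of Theorem \ref{Thm-mainbalanced} into a gluing argument. First I would record the numerology. If $C$ is a rational curve of degree $e$ on a smooth complete intersection $X$ of type $(d_1,\dots,d_k)$, then $N_{C/X}$ has rank $r=n-k-1$ and degree $e(n-d+1)-2$. As recalled in the introduction, $C$ is very free exactly when every summand of $N_{C/X}$ has degree at least $1$, and it is free when every summand has degree at least $0$. When $N_{C/X}$ is balanced, its minimal summand is $\lfloor \frac{e(n-d+1)-2}{n-k-1}\rfloor$, so $C$ is very free iff $e(n-d+1)-2\geq n-k-1$, i.e. iff $e\geq m$, and $C$ is free iff $e(n-d+1)\geq 2$. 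Consequently every balanced curve supplied by Theorem \ref{Thm-mainbalanced} in a degree $e\geq m$ is automatically very free, while those of small degree with $e(n-d+1)\geq 2$ are free. In particular, since Theorem \ref{Thm-mainbalanced} produces a balanced line ($e=1$), $X$ contains a free line whenever $d\leq n-1$, and it always contains free conics and cubics. The bound $m$ is sharp, because a curve of degree $e<m$ has $\deg N_{C/X}<r$, forcing some summand of degree $\leq 0$.

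Next I would set up the gluing step from Lemma \ref{restrictedNormalBundle}. Let $C_1$ be very free and $C_2$ free, and attach them at a general point $q$ of $X$ with general relative tangent direction; this is possible because $N_{C_1/X}$ is ample and $N_{C_2/X}$ is globally generated, so each curve passes through $q$ and the tangent of $C_1$ can be taken general with respect to $C_2$. By Lemma \ref{restrictedNormalBundle}(1), $N_{C/X}|_{C_1}$ is a positive elementary modification of $N_{C_1/X}$ and $N_{C/X}|_{C_2}$ one of $N_{C_2/X}$; raising degrees keeps the summands of the first $\geq 1$ and those of the second $\geq 0$, and both restrictions stay globally generated. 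By Lemma \ref{restrictedNormalBundle}(2) the node then smooths, and the smoothing has normal bundle specializing to $\bigoplus_i \OO_{\PP^1}(a_{\sigma(i)}+b_i)$ with each $a_{\sigma(i)}\geq 1$ and $b_i\geq 0$; thus all summands have degree $\geq 1$, and since $h^1(N(-1))$ is upper semicontinuous the general smoothing is very free of degree $\deg C_1+\deg C_2$. In short, attaching a free curve to a very free curve yields a very free curve whose degree is the sum.

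The theorem now follows combinatorially, in two regimes. If $d\leq n-1$, then $X$ contains a free line, so starting from a very free curve of degree $m$ (present since $m\leq n$ lies in the range of Theorem \ref{Thm-mainbalanced}) and attaching lines one at a time produces very free curves of every degree $\geq m$. If $d=n$, no free line is available and the increments coming from free conics and cubics realize every integer $\geq 2$; here it suffices to exhibit very free curves of the two \emph{consecutive} degrees $m$ and $m+1$, after which degree $m$ reaches all degrees $m$ and $\geq m+2$ while degree $m+1$ supplies the remaining value. When $d=n$ and $k\geq 2$ (including all quadrics with $n=2k$, where $m=k+1$), one has $m\leq n-1$, so both $m$ and $m+1$ lie in the degree range of Theorem \ref{Thm-mainbalanced} and are very free directly. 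The sole remaining case is the degree-$n$ hypersurface, where $m=n$: then $m+1=n+1$ is supplied by Corollary \ref{cor-nPlusOneCase} for $n\geq 5$ and by Proposition \ref{4n+1} for $n=4$, the tiny case $n=3$ being a del Pezzo surface carrying rational curves of all degrees. Assembling these, $X$ carries a very free rational curve of every degree $e\geq m$.

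I expect the main obstacle to be precisely the production of the consecutive pair $m,m+1$ in the boundary regime $d=n$: since every increment available there has size $\geq 2$, the degree $m+1$ can never be manufactured by gluing and must be found directly, which is exactly why the borderline degree-$(n+1)$ results (Corollary \ref{cor-nPlusOneCase} and Proposition \ref{4n+1}) are needed to treat the degree-$n$ hypersurfaces. A secondary point demanding care is verifying that the positive elementary modifications in the gluing step really keep the relevant summands $\geq 1$, so that very-freeness is genuinely inherited by the smoothing, together with the choice of a sufficiently general attaching point and tangent direction.
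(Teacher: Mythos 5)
Your overall architecture (balanced curves of Theorem \ref{Thm-mainbalanced} give very free curves exactly when $e \geq m$; then glue and smooth via Lemma \ref{restrictedNormalBundle}) matches the paper, but your handling of the boundary regime $d=n$ diverges from it and contains a genuine gap. Because you only allow gluing a \emph{free} curve to a very free one, when $d=n$ your smallest increments are $2$ and $3$, so you must manufacture very free curves in two consecutive seed degrees $m$ and $m+1$; for the degree-$n$ hypersurface this forces you to produce a very free curve of degree $n+1$, and for $n=4$ you invoke Proposition \ref{4n+1} --- which is proved only under the hypothesis $p \neq 5$. The theorem is asserted over an algebraically closed field of arbitrary characteristic, so your proof fails for quartic threefolds in characteristic $5$: from the seed $m=4$ with increments $2,3$ you reach every degree except $5$, and you have no source for a very free quintic. (The $n=3$ case is likewise only asserted; it is true --- an immersed rational quartic on a cubic surface has rank-one normal bundle of degree $-K_X\cdot C - 2 = 2$ --- but you give no construction.) The missing quintic could in fact be rescued inside your own framework by gluing a free conic to a free cubic and tracking splitting types through the modifications of Lemma \ref{restrictedNormalBundle}: with general tangent directions the restrictions become $\OO_{\PP^1}\oplus\OO_{\PP^1}(1)$ and $\OO_{\PP^1}(1)^2$, so the smoothing lies in the versal deformation of $\OO_{\PP^1}(1)\oplus\OO_{\PP^1}(2)$ and is very free; but your stated gluing principle ``very free $+$ free'' does not cover this, since neither curve is very free.

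The paper sidesteps all of this with a single uniform induction that you did not consider: for every $e > m$ it attaches a \emph{line} to a very free curve $C_0$ of degree $e-1$, even when $d=n$ and the line is not free. The point is that Lemma \ref{restrictedNormalBundle}(1) heals the unique $\OO_{\PP^1}(-1)$ summand of $N_{\ell/X}$: the positive elementary modification at the node in the general tangent direction of $C_0$ makes $N_{C/X}|_{\ell}$ globally generated, while $N_{C/X}|_{C_0}$ keeps all summands positive, and the smoothing is very free of degree $e$. This works in all characteristics, needs no consecutive seeds, and never uses Corollary \ref{cor-nPlusOneCase} or Proposition \ref{4n+1} (in the paper those only serve the ``slightly larger degree range'' remarks), whereas your route makes them load-bearing --- and that is exactly where it breaks. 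The remainder of your argument (the numerology showing balanced implies very free iff $e \geq m$, sharpness of $m$, freeness of lines when $d \leq n-1$ and of conics and cubics when $d=n$, and the very-free-plus-free gluing step itself, including the observation that positive modifications and generization preserve ``all summands $\geq 1$'') is correct and consistent with the paper.
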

\begin{proof}
We first address the case $e=m$.  If all $d_i$ are equal to $2$, we have $m \leq k+1$. Hence, there exists rational curves of degree $m$ with balanced normal bundle by Corollary \ref{rPlus2Cor}. Otherwise, $m \leq n$ and there exists rational curves of degree $m$ with balanced normal bundle. Since the degree of the normal bundle is at least the rank, these normal bundles are ample. 

To obtain curves of larger degree $e$ with ample normal bundle, we can attach a general line $\ell$ to a general curve $C_0$ of degree $e-1$ with ample normal bundle. Let $C = \ell \cup C_0$. The normal bundle $N_{\ell/X}$ is either globally generated or when $d=n$ it has a unique factor of $\OO_{\PP^1}(-1)$. In either case, when we attach $C_0$ with general tangent direction, Lemma \ref{restrictedNormalBundle} (1) implies that $N_{C/X}|_{\ell}$ is globally generated. Since the normal bundle of $C_0$ is ample, Lemma  \ref{restrictedNormalBundle} (1)  implies that every summand in $N_{C/X}|_{C_0}$ has positive degree. By Lemma \ref{restrictedNormalBundle} (2), $C$ smooths to a rational curve where all the summands of the normal bundle have positive degree. The theorem follows.
\end{proof}

\bibliographystyle{plain}

\end{document}